\newtheorem{theorem}{Theorem}[section]
\newtheorem{utv*}{Proposition}
\newtheorem{hyp*}{Conjecture}
\newtheorem{lemma}[theorem]{Lemma}
\newtheorem{defin}{Definition}
\newtheorem{zamech}{Remark}
\newtheorem*{th*}{Theorem}
\newcommand{\av}[2]{\langle #1\rangle_{_{\scriptstyle #2}}}
\def\sli{\sum\limits}
\def\ili{\int\limits}
\def\R{\mathbb{R}}
\begin{document}

\title[Logarithmic bump conditions]
{Logarithmic bump conditions and the two-weight boundedness of  Calder\'on--Zygmund operators}
\author{David Cruz-Uribe, SFO}
\address{Department of Mathematics, Trinity College}
\email{David.CruzUribe@trincoll.edu}

\author{Alexander Reznikov}
\address{Department of Mathematics,  Michigan State University, East
Lansing, MI 48824, USA}
\email{reznikov@ymail.com}

\author{Alexander Volberg}
\address{Department of Mathematics, Michigan State University, East
Lansing, MI 48824, USA}
\email{volberg@math.msu.edu}
\urladdr{http://sashavolberg.wordpress.com}

\thanks{The first author is supported by the Stewart-Dorwart faculty
  development fund at Trinity College and by
  grant MTM2009-08934 from the Spanish Ministry of Science and
  Innovation; the third author  is supported  by the NSF under the grant  DMS-0758552. }


\makeatletter
\@namedef{subjclassname@2010}{
  \textup{2010} Mathematics Subject Classification}
\makeatother

\subjclass[2010]{42B20, 42B35, 47A30}



%
%

\keywords{Calder\'on--Zygmund operators, Carleson embedding theorem, Bellman function, stopping time,
   bump conditions, Orlicz norms}

\renewcommand{\abstractname}{Abstract}
\begin{abstract}
We prove that if a pair of weights $(u,v)$ satisfies a sharp $A_p$-bump condition in the scale of all log bumps or certain loglog bumps,
then Haar shifts map $L^p(v)$ into $L^p(u)$ with a constant quadratic in
the complexity of the shift.  This in turn implies the two weight boundedness
for all Calder\'on-Zygmund operators.  This gives a partial
answer to a long-standing conjecture.  We also give a partial
result for a related
conjecture for weak-type inequalities.  To prove our main results we
combine several different approaches to these problems; in particular
we use many of the ideas developed to prove the $A_2$ conjecture. As a
byproduct of our work we also disprove a conjecture by
Muckenhoupt and Wheeden  on weak-type inequalities for the Hilbert transform. This is closely related to the
recent counterexamples of Reguera, Scurry and Thiele.
\end{abstract}

\date{}
\maketitle

\section{Introduction}

In this paper we prove several partial results related to a pair of long-standing conjectures in the theory of two-weight norm
inequalities.   To state the conjectures and our results we recall a few facts about Orlicz
spaces;  see \cite[Chapter~5]{CU-M-P-book} for complete details.
Given a Young function $A$, the complementary function $\bar{A}$ is the Young function that
satisfies
\[ t \leq A^{-1}(t) \bar{A}^{-1}(t) \leq 2t, \qquad t>0. \]
We will say that a Young
function $\bar{A}$ satisfies the $B_{p'}$ condition, $1<p<\infty$, if for
some $c>0$,
\[ \int_c^\infty \frac{\bar{A}(t)}{t^{p'}}\frac{dt}{t} < \infty. \]

If $A$ and $\bar{A}$ are  doubling (i.e., if $A(2t)\leq
CA(t)$, and similarly for $\bar{A}$), then $\bar{A} \in B_p$ if and only
if
\[ \int_c^\infty \left(\frac{t^{p}}{A(t)}\right)^{p'-1}\frac{dt}{t} <
\infty. \]

\begin{zamech}
  As we will see with specific examples below, if $\bar{A}\in B_{p'}$,
  then $\bar{A}(t)\lesssim t^{p'}$ and $A(t) \gtrsim t^{p}$.
\end{zamech}

 Given $p$, $1<p<\infty$, let $A$ and $B$ be Young functions such that $\bar{A}\in
B_{p'}$ and $\bar{B}\in B_p$.  We say that the pair of weights $(u,v)$
satisfies an $A_p$ bump condition with respect to $A$ and $B$ if
\begin{equation} \label{eqn:ap-bump}
 \sup_Q \|u^{1/p}\|_{A,Q} \|v^{-1/p}\|_{B,Q}  < \infty,
\end{equation}
where the supremum is taken over all cubes $Q$ in $\R^d$, and the
Luxemburg norm
is defined by
\[ \|f\|_{A,Q} = \inf\bigg\{ \lambda > 0 : \frac{1}{|Q|}\int_Q
  A\big(|f(x)|/\lambda\big)\,dx \leq 1 \bigg\}. \]
If  \eqref{eqn:ap-bump} holds, then it is conjectured that
\begin{equation} \label{eqn:Lp}
T : L^p(v)\rightarrow L^p(u).
\end{equation}
Similarly, if the pair $(u,v)$ satisfies the weaker condition
\begin{equation} \label{eqn:ap-bump-weak}
 \sup_Q \|u^{1/p}\|_{A,Q} \|v^{-1/p}\|_{p',Q}  < \infty,
\end{equation}
then the conjecture is that
\begin{equation} \label{eqn:Lp-weak}
T : L^p(v)\rightarrow L^{p,\infty}(u).
\end{equation}

The conditions \eqref{eqn:ap-bump} and \eqref{eqn:ap-bump-weak} are
referred to as $A_p$ bump conditions because they may be thought of
as the classical two-weight $A_p$ condition with the localized $L^p$
and $L^{p'}$ norms ``bumped up'' in the scale of Orlicz spaces.  These
conditions have a long history.  They first appeared in connection with
estimates for integral operators related to the spectral theory of
Schr\"odinger operators: see Fefferman~\cite{CF} and
Chang--Wilson--Wolff~\cite{ChWW}. These papers demonstrate a very
close connection with uncertainty principles;  for this aspect also
see the very interesting paper of P\'erez and Wheeden~\cite{PeWh}.
The bump condition considered in \cite{ChWW,CF} was the
Fefferman--Phong condition that used ``power'' bumps:  i.e., Young
functions of the form $A(t)=t^{rp}$, $r>1$.  Power bumps were
independently introduced by Neugebauer~\cite{neugebauer83}.   Bump
conditions in full generality were introduced by
P\'erez~\cite{Pe94,Pe94JL,Pe}.

The conjectured strong and weak-type inequalities for singular
integrals have been studied extensively, but the full results have
proved elusive.  The strong-type conjecture is true for
operators of bounded complexity (e.g., the Hilbert transform, the
Riesz transforms and the Buerling--Ahlfors operators):
see~\cite{CU-Ma-Pe}.
Lerner~\cite{Le} proved that it holds for any Calder\'on-Zygmund
operator if $p>n$.  

Very recently, it was
proved for $p=2$ in any dimension and for any Calder\'on--Zygmund operator using Bellman function techniques:
see~\cite{NRV1}.

\begin{theorem}
\label{thm:conjecture-thmp2}
Given $p=2$, suppose the pair of weights $(u,v)$ satisfies
\eqref{eqn:ap-bump},
where $\bar{A}\in B_{2}$ and $\bar{B}\in B_{2}$.
Then every Calder\'on-Zygmund singular integral operator $T$ satisfies
$\|Tf\|_{L^2(u)}\leq C\|f\|_{L^2(v)}$, where $C$ depends only on $T$,
the dimension $d$, and the suprema in
\eqref{eqn:ap-bump}.
\end{theorem}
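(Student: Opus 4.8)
By the now-standard representation theorems of Hytönen, every Calderón–Zygmund operator $T$ can be written as an average (over random dyadic grids) of Haar shift operators $S^{m,k}$ of complexity $(m,k)$, plus paraproduct and "dual paraproduct" terms. Accordingly it suffices to prove that each such Haar shift maps $L^2(v)\to L^2(u)$ with operator norm growing at most polynomially — in fact, as the abstract promises, quadratically — in the complexity $\kappa = \max(m,k)$, with the implied constant depending only on $d$ and the supremum $\mathcal{A}$ in \eqref{eqn:ap-bump}. Indeed, if $\|S^{m,k}\|_{L^2(v)\to L^2(u)} \lesssim (1+\kappa)^2 \mathcal{A}$, then since the coefficients in the representation decay exponentially in $\kappa$, the average converges and one recovers the bound for $T$. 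The paraproduct terms are handled separately: they reduce to testing conditions which, as in \cite{CU-Ma-Pe}, follow from the $A_p$-bump hypothesis (here $p=2$) because $\bar A\in B_2$ guarantees the relevant Carleson embedding for the bumped average.

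**Next I would set up the Bellman-function / Carleson-embedding machinery for the shift estimate.** Fixing a dyadic grid and a shift $S = S^{m,k}$, expand the bilinear form $\langle S f, g\rangle$ over the Haar system; after the usual splitting into "main" and "error" parts (the error part controlled by the two-weight $A_2$ part of the bump, which is finite since \eqref{eqn:ap-bump} dominates the bare $A_2$ constant), the main term is a sum over cubes $Q$ of inner products of martingale differences localized to the $\kappa$ generations below $Q$. The crucial point is to absorb the complexity: one writes each such term using averages $\langle f\rangle_Q^v$ and $\langle g\rangle_Q^u$ of the unweighted functions against the measures $v\,dx$ and $u\,dx$, and is left needing a two-weight Carleson embedding theorem of the form
\[ \sum_Q \alpha_Q \, (\langle f\rangle_{Q,v})^2 \, v(Q) \lesssim \|f\|_{L^2(v)}^2, \]
with a matching statement on the $u$ side, where the sequence $\{\alpha_Q\}$ is a Carleson sequence whose Carleson norm is controlled by the bump constant. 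This is exactly where the Bellman function enters: one constructs a function of the variables $(\langle f\rangle_{Q,v}, v(Q)/|Q|, \dots)$ with the right size and concavity to run the standard stopping-time argument, the concavity being available precisely because $\bar A\in B_2$ converts the Luxemburg-norm bump into a summable "log-bump" testing quantity.

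**The main obstacle I expect is the quantitative dependence on complexity in the Carleson/Bellman step.** A naive iteration of the embedding over the $\kappa$ intermediate generations loses a factor exponential in $\kappa$; to get the quadratic bound one must instead exploit that the shift, though it mixes $\kappa$ generations, still has a "telescoping" structure, so that summing over a fixed generation gap $j\le\kappa$ costs only a constant and summing over all $j$ then costs at most $\kappa$, with a second factor of $\kappa$ coming from the two-sided (both $f$ and $g$) nature of the estimate. Carrying this out requires a careful parametrization of the Bellman domain by the generation gap, and checking that the Carleson norm of $\{\alpha_Q\}$ does \emph{not} itself degrade with $\kappa$ — this is the delicate bookkeeping point. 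Once the complexity-quadratic shift bound is in hand, the reduction described above, together with the treatment of the paraproducts via the $B_2$ integrability of $\bar A$ and $\bar B$, closes the proof of Theorem~\ref{thm:conjecture-thmp2}.
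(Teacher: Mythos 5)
There is a genuine gap, and it sits exactly at the point you label ``the delicate bookkeeping point.'' First, a structural remark: the paper does not prove Theorem~\ref{thm:conjecture-thmp2} at all --- it is quoted from \cite{NRV1}, where it is obtained by a Bellman-function construction. The machinery developed in this paper (reduction to Haar shifts, Sawyer-type testing conditions, the corona decomposition into the families $\mathcal{P}^a$, and the exponential-decay distributional inequality) is precisely the route your sketch follows, and the paper shows explicitly where that route breaks for general bumps: after the Carleson-embedding step one must sum over the parameter $a$ indexing the levels $2^a\leq \langle u\rangle_Q^{1/p}\langle\sigma\rangle_Q^{1/p'}<2^{a+1}$, and in the two-weight setting this sum runs from $-\infty$. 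One only gets the needed geometric decay $2^{a\gamma p}$ by interpolating the Luxemburg norm between $B$ and a slightly smaller log-bump $B_0$ (inequality \eqref{interp1}/Lemma~\ref{interp2}), which uses the explicit logarithmic form of the Young function. For a general $\bar A\in B_2$, $\bar B\in B_2$ no such self-improvement is available, and the ``standard stopping-time argument'' you invoke does not close. So your assertion that the Carleson norm of $\{\alpha_Q\}$ is controlled by the bump constant ``precisely because $\bar A\in B_2$'' is not a lemma you can cite; it is the entire difficulty, and it is false along the corona route.

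The second, related gap is that the Bellman function itself --- the function of $(\langle f\rangle_{Q,v}, v(Q)/|Q|,\dots)$ with ``the right size and concavity'' --- is never constructed. In the $A_2$-type arguments you are patterning this on, the Bellman function is explicit and its concavity is a computation; here the existence of a function whose concavity encodes an arbitrary $B_2$ bump is the theorem of \cite{NRV1}, not a routine step. As written, your proposal reduces the statement to two unproved claims (the bump-controlled Carleson embedding and the concavity of an unspecified Bellman function), each of which is equivalent in difficulty to the theorem itself. A correct completion would either have to produce the \cite{NRV1} Bellman function explicitly, or restrict $A,B$ to log bumps --- in which case you would be proving the weaker Theorem~\ref{thm:conjecture-thm} at $p=2$, not Theorem~\ref{thm:conjecture-thmp2}. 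The complexity bookkeeping in your third paragraph is comparatively minor and is handled in the paper by the Hyt\"onen--Lacey reordering of the shift into the positive operators $S^i_{\mathcal{L}}$, giving the factor $\tau^3$; that part of your outline is plausible but also not carried out.
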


\begin{zamech}
  Extending Theorem~\ref{thm:conjecture-thmp2} to the case $p\neq 2$,
  and especially strengthening it by replacing the two-sided bump
  conditions \eqref{eqn:ap-bump} by the weaker, one-sided conditions
  \eqref{eqn:separated-bumps} and \eqref{eqn:separated-bumps-weak}
  discussed below, has proved to be very  difficult.
\end{zamech}

Certain additional results are known in the special case
that $A$ and $B$ are ``log-bumps'': that is, of the form
\begin{equation}
\label{eqn:left-bump}
A(t) = t^p\log(e+t)^{p-1+\delta}, \quad \bar{A}(t) \approx
\frac{t^{p'}}{\log(e+t)^{1+\delta'}} \,,
\end{equation}
\begin{equation}
 \label{eqn:right-bump}
B(t)=t^{p'}\log(e+t)^{p'-1+\delta}, \qquad \bar{B}(t) \approx
 \frac{t^{p}}{\log(e+t)^{1+\delta''}}\,,
 \end{equation}
where $\delta>0$, $\delta'=\delta/(p-1)$, $\delta''=\delta/(p'-1)$. But even in this case the result for all Calder\'on--Zygmund operators was unknown.
The weak-type conjecture is only known for log bumps:
see~\cite{cruz-uribe-perez99}.  For a complete history of both
conjectures and these partial results, we refer the reader to the
work of Cruz-Uribe, P\'erez, and Martell \cite{CU-Ma-Pe,CU-M-P-book,CU-Ma-Pe07,cruz-uribe-perez00b,CU-P-pisa}, and Treil, Volberg,
and Zheng~\cite{TVZ},
and the extensive references they contain.

\smallskip

One can motivate the conjectures \eqref{eqn:ap-bump} $\Rightarrow$
\eqref{eqn:Lp} and \eqref{eqn:ap-bump-weak} $\Rightarrow$
\eqref{eqn:Lp-weak} (and the related conjectures we  consider
below) by considering a pair of conjectures due to Muckenhoupt and Wheeden.
First, they conjectured that a singular integral operator (in
particular, the Hilbert transform) satisfies \eqref{eqn:Lp} provided
that the Hardy-Littlewood maximal operator satisfies
\begin{gather*}  M : L^p(v) \rightarrow L^p(u), \\ M : L^{p'}(u^{1-p'})
\rightarrow L^{p'}(v^{1-p'}).
\end{gather*}
They also conjectured that \eqref{eqn:Lp-weak} holds if the maximal
operator satisfies the second, $L^{p'}$ inequality.
P\'erez \cite{Pe} (see also \cite{CU-M-P-book}) proved that a
sufficient condition for each of these
estimates to hold for $M$ is that the pair $(u,v)$ satisfies
\begin{gather} \label{eqn:separated-bumps}
\sup_Q \|u^{1/p}\|_{p,Q} \|v^{-1/p}\|_{B,Q}  < \infty, \\
\label{eqn:separated-bumps-weak}
\sup_Q \|u^{1/p}\|_{A,Q} \|v^{-1/p}\|_{p',Q}  < \infty;
\end{gather}
in particular, both these conditions hold if \eqref{eqn:ap-bump}
holds.

Though intuitively appealing, both of the Muckenhoupt-Wheeden
conjectures are false.  A
counter-example to the strong-type conjecture was
recently found by Reguera and Scurry \cite{RS}.   The weak-type conjecture  is an easy consequence of the
two-weight, weak $(1,1)$ conjecture (also due to Muckenhoupt and
Wheeden), but this was recently proved false by Reguera and
Thiele~\cite{RT}.  While this does not show the conjecture false, it
strongly suggests that it is.  And as a byproduct of our
approach to our main results we show that the weak-type conjecture is
also false; as a consequence we get another proof that their weak
$(1,1)$ conjecture is false.

\smallskip

Given the falsity of the Muckenhoupt-Wheeden conjectures (even for
$p=2$), the $A_p$ bump conjectures become even more
interesting. Moreover, Theorem \ref{thm:conjecture-thmp2} and the
other results listed above strongly suggest that it should hold in the
full range of $p$, dimensions, and Calder\'on--Zygmund operators.
Here we consider two even stronger conjectures, motivated by the fact
that the ``separated'' bump conditions~\eqref{eqn:separated-bumps}
and~\eqref{eqn:separated-bumps-weak} are sufficient for the maximal
operator inequalities in the original conjecture.

\begin{hyp*} \label{c:conjecture-thm}
Given $p$, $1<p<\infty$, suppose the pair of weights $(u,v)$ satisfies
\eqref{eqn:separated-bumps} and \eqref{eqn:separated-bumps-weak},
where $\bar{A}\in B_{p'}$ and $\bar{B}\in B_{p}$.
Then every Calder\'on-Zygmund singular integral operator $T$ satisfies
$\|Tf\|_{L^p(u)}\leq C\|f\|_{L^p(v)}$, where $C$ depends only on $T$,
the dimension $d$, and the suprema in
\eqref{eqn:separated-bumps} and \eqref{eqn:separated-bumps-weak}.
\end{hyp*}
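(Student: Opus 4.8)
The plan is to reduce the conjecture to a dyadic statement about positive operators and then to attack that statement via a two-weight Carleson embedding theorem proved with a Bellman function. First I would invoke the representation of a Calder\'on--Zygmund operator $T$ as an average over random dyadic grids $\mathcal D$ of Haar shifts $S_{\mathcal D}^{m,n}$ of complexity $\vk=\max(m,n)$, the weights in the average decaying geometrically in $\vk$. By duality it then suffices to prove $|\langle S_{\mathcal D}^{m,n}f,g\rangle|\lesssim \psi(\vk)\,\|f\|_{L^p(v)}\|g\|_{L^{p'}(u)}$ with $\psi$ growing at most polynomially in $\vk$; a quadratic bound $\psi(\vk)\lesssim \vk^2$ is more than enough to sum the series and recover $T$. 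An alternative entry point would be a pointwise or norm domination of $Tf$ by sparse operators $\sum_{Q\in\mathcal S}\av{|f|}{Q}\mathbf 1_Q$, which removes the complexity bookkeeping but presumes the needed domination in the required generality; extrapolating instead from the $p=2$ result of Theorem~\ref{thm:conjecture-thmp2} is tempting but does not obviously bridge the gap between two-sided and separated bumps.

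For a fixed Haar shift, the main estimate would be carried out by a \emph{parallel corona}, i.e.\ a simultaneous stopping-time decomposition: build a stopping family for $f$ from $v$-averages and, independently, one for $g$ from $u$-averages, and split $\langle S_{\mathcal D}^{m,n}f,g\rangle$ according to which pair of stopping cubes a given cube of the shift sits under. The diagonal sum --- cubes lying under the same generation of both coronas --- is the heart of the matter and reduces to a two-weight Carleson embedding: one shows that the stopping data $\{\alpha_Q\}$ produced by the algorithm satisfy a Carleson packing condition, and that this condition embeds the relevant square-function-type sums into $L^p(u)$ and $L^{p'}(v)$. This is where the separated bump hypotheses \eqref{eqn:separated-bumps} and \eqref{eqn:separated-bumps-weak} should enter: by H\"older's inequality in the Orlicz scale, together with the $B_p$ and $B_{p'}$ integrability of $\bar B$ and $\bar A$, the bump suprema ought to convert into exactly the Carleson constant needed. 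I would prove the embedding itself by producing a Bellman function of the local averages of $f$, $u$, $v$ and the running Carleson mass, in the spirit of the argument behind Theorem~\ref{thm:conjecture-thmp2}, using the Bellman estimate to track the dependence on $\vk$.

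The step I expect to be the true obstacle --- and the reason the full conjecture is still open while only the log-bump (and certain loglog-bump) cases appear reachable --- is the summation over stopping generations when $A$ is a general $B_{p'}$ bump. The stopping construction yields a sum $\sum_j$ of local pieces, each controlled by $\|u^{1/p}\|_{A,Q_j}$ times a factor inherited from the $v$-corona, and the geometric decay needed to sum over $j$ is available only when $A$ grows fast enough past the critical size $t^p\log(e+t)^{p-1}$; for a bare $B_{p'}$ bump the naive summation diverges and must be replaced by a Bellman estimate finely adapted to the exact (log or loglog) shape of $A$ and $B$. Compounding this is the off-diagonal interaction between the two \emph{independent} coronas, which in the genuinely two-weight setting is harder than in the $A_2$ situation where $u$ and $v$ are tied together, together with the demand that none of these steps cost more than polynomially in $\vk$. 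Barring a new idea at the general-bump summation, the most this plan delivers is the partial result announced in the abstract; it is precisely that summation which would have to be resolved to settle the conjecture in full.
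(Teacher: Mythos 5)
You have not proved the statement, and you should be aware that neither does the paper: this statement is precisely the conjecture that the paper leaves open, proving it only in the special cases of log bumps \eqref{eqn:left-bump}--\eqref{eqn:right-bump} (Theorem~\ref{thm:conjecture-thm}) and loglog bumps \eqref{llA}--\eqref{llB} with $\delta$ sufficiently large (Theorem~\ref{thmllst}). Your write-up is a program rather than a proof, and you concede as much: the step you flag as ``the true obstacle'' --- obtaining geometric decay across stopping generations from a bare $\bar A\in B_{p'}$, $\bar B\in B_p$ hypothesis --- is exactly the missing ingredient, so the proposal cannot be accepted as a proof of the conjecture. Your diagnosis of where the difficulty sits is, however, accurate and matches the paper's own account: in the paper's argument for the partial results, after reducing to positive shifts and the Sawyer-type testing conditions of Theorem~\ref{thm:testing}, the sum in \eqref{main} runs over the parameter $a$ indexing the local two-weight $A_p$ ratio $\av{u}{Q}^{1/p}\av{\sigma}{Q}^{1/p'}\approx 2^a$, and since in the two-weight setting this ratio can be arbitrarily small, $a$ runs to $-\infty$; the decay factor $2^{a\gamma p}$ in \eqref{ag} that makes this sum converge is extracted only through the log-bump self-improvement inequality \eqref{interp1} (Lemma~\ref{interp2}), and its loglog analogue \eqref{eps} with the restriction $p\kappa>1$. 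For a general $B_{p'}$ bump no such decay is available, which is why the conjecture remains open.

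As for the route itself, it differs from the paper's. You propose the random dyadic (Hyt\"onen) representation followed by a parallel corona for $f$ and $g$ and a Bellman-function proof of a two-weight Carleson embedding --- closer in spirit to the Bellman approach behind the $p=2$ result of Theorem~\ref{thm:conjecture-thmp2} in \cite{NRV1}. The paper instead uses Lerner's local mean oscillation decomposition to reduce each shift to positive operators $S_{\mathcal L}$ of complexity at most $(0,\tau-1)$, verifies the testing conditions of Theorem~\ref{thm:testing} via the exponential-decay distributional inequality (Theorem~\ref{di}), a Carleson-sequence lemma, and Theorem~\ref{thm:perez-orlicz-max}, and then injects the bump hypotheses through \eqref{A}--\eqref{B} and the interpolation lemma. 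Either scaffolding plausibly yields the partial results with polynomial dependence on the complexity (the paper gets $\tau^3$, which suffices to sum the representation; your insistence on a quadratic bound is unnecessary), but neither resolves the general-bump summation, so the most your plan can deliver is what the paper already proves.
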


\begin{hyp*} \label{c:conjecture-weak-thm}
Given $p$, $1<p<\infty$, suppose the pair of weights $(u,v)$ satisfies
\eqref{eqn:separated-bumps-weak} where $\bar{A}\in B_{p'}$.
Then every Calder\'on-Zygmund singular integral operator $T$ satisfies
$\|Tf\|_{L^{p,\infty}(u)}\leq C\|f\|_{L^{p}(v)}$, where $C$ depends only on $T$,
the dimension $d$, and the supremum in \eqref{eqn:separated-bumps-weak}.
\end{hyp*}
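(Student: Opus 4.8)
\noindent\emph{Toward a proof.} The full conjecture is open; the following is the line of attack I would pursue, which delivers the conclusion in the logarithmic (and loglog) scale and makes clear where it stalls in general. The plan is to reduce, via Hyt\"onen's dyadic representation theorem, to a weak-type bound for Haar shift operators $S^{m,n}$ with at most polynomial dependence on the complexity $(m,n)$, and then to prove that bound by a stopping-time (corona) decomposition followed by a Carleson embedding theorem established by the Bellman function method. First I would fix $S^{m,n}$ and a level $\lambda>0$, set $E_\lambda=\{x:|S^{m,n}f(x)|>\lambda\}$, and aim for $\lambda^{p}u(E_\lambda)\le C\,P(m,n)\,\|f\|_{L^{p}(v)}^{p}$ with $P$ a fixed polynomial. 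Since $\|\cdot\|_{L^{p,\infty}(u)}$ is not subadditive, one cannot merely sum the individual estimates when assembling the operator from its shifts; instead one distributes the level $\lambda$ among the terms with weights chosen so that the resulting series converges --- a standard maneuver that works precisely because the complexity bound $P(m,n)$ is polynomial while the coefficients in the representation theorem decay exponentially.

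Next I would replace $S^{m,n}$ by a positive dyadic (sparse) sum --- either directly, by isolating the positive part of the shift inside each corona, or by invoking Lerner's local mean oscillation / sparse domination, which costs only a factor $C(m+n)$ --- so that it remains to bound $\sum_{Q\in\mathcal S}\langle|f|\rangle_Q\mathbf 1_Q$ from $L^{p}(v)$ to $L^{p,\infty}(u)$ for a sparse family $\mathcal S$. The bump enters through H\"older's inequality in the form $\langle|f|\rangle_Q\le\langle h\rangle_Q^{1/p}\,\|v^{-1/p}\|_{p',Q}$ with $h=|f|^{p}v$, together with \eqref{eqn:separated-bumps-weak} rewritten as $\|v^{-1/p}\|_{p',Q}\le K\,\|u^{1/p}\|_{A,Q}^{-1}$. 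As $\int h\,dx=\|f\|_{L^{p}(v)}^{p}$, everything reduces to the inequality
\[ u\Big(\Big\{\, \sum_{Q\in\mathcal S}\frac{\langle h\rangle_Q^{1/p}}{\|u^{1/p}\|_{A,Q}}\,\mathbf 1_Q>\lambda\,\Big\}\Big)\;\le\;\frac{C}{\lambda^{p}}\int h\,dx\,. \]

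The heart of the matter is a stopping-time analysis of $h$ over the cubes of $\mathcal S$: stopping when the dyadic average of $h$ grows by a fixed factor produces a sparse family $\mathcal F$ of ``tops'' with $\langle h\rangle_Q\approx\langle h\rangle_F$ whenever $Q$ lies in the corona of $F\in\mathcal F$, so the sum over a corona collapses geometrically to its top. What survives is a two-measure Carleson embedding estimate --- $u$ against weights built from $u(Q)$ and $\|u^{1/p}\|_{A,Q}^{-p}$ --- which I would establish on $(\R^{d},u)$ by exhibiting a Bellman function of the natural variables: running averages of $h$ and of $u$, the localized Orlicz norm $\|u^{1/p}\|_{A,Q}$, and the norm. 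The Carleson condition that feeds this Bellman function is precisely what the hypothesis $\bar A\in B_{p'}$ (that is, $\int_c^{\infty}\bar A(t)\,t^{-p'-1}\,dt<\infty$) is meant to supply: it bounds how fast $\|u^{1/p}\|_{A,Q}^{-1}$ can grow along a dyadic tower.

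The hard part is exactly this last step. The Bellman/Carleson argument does not close for a general Young function $A$ with $\bar A\in B_{p'}$: one needs \emph{quantitative} control on how $\|u^{1/p}\|_{A,Q}$ varies under bisection of $Q$, and with current techniques this is available only when $A$ is a logarithmic bump as in \eqref{eqn:left-bump} or, with additional effort, a loglog bump. The plan therefore yields the conjecture in those scales --- recovering and slightly extending the log-bump weak-type theorem of Cruz-Uribe and P\'erez --- while the passage to an arbitrary bump $\bar A\in B_{p'}$, and likewise the strong-type Conjecture~\ref{c:conjecture-thm}, remain open.
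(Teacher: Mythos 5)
You are right to treat this statement as what it is in the paper --- a conjecture, proved there only when $A$ is a log bump (Theorem~\ref{thm:conjecture-weak-thm}) or a loglog bump with $\delta$ sufficiently large (Theorem~\ref{thmllw}) --- and your opening moves coincide with the paper's: Hyt\"onen's representation theorem, reduction to positive dyadic operators with polynomial dependence on complexity, and insertion of the bump through H\"older's inequality. (Incidentally, your worry about non-subadditivity of $\|\cdot\|_{L^{p,\infty}(u)}$ is harmless here: for $p>1$ this quasi-norm is equivalent to a norm.) After that the routes diverge. The paper does not estimate the level sets of $S(f\sigma)$ directly. It invokes the Sawyer-type testing theorem of Hyt\"onen et al.\ (Theorem~\ref{thm:testing-weak}), which for the weak type reduces everything to the \emph{single dual} testing condition $\sup_Q\|\chi_Q S^*(\chi_Q u)\|_{L^p(\sigma)}/u(Q)^{1/p}$ together with the weak $(p,p)$ bound for $M(\cdot\,\sigma)$, the latter following from the two-weight $A_p$ condition alone; this is why only the bump \eqref{eqn:separated-bumps-weak} (equivalently \eqref{eqn:alt-separated-bumps-weak}) is needed. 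The testing condition is then verified by the corona decomposition of Section~\ref{section:proof} (with $u$ and $\sigma$ exchanged), stratifying cubes by the local $A_p$ ratio $\langle u\rangle_Q^{1/p}\langle\sigma\rangle_Q^{1/p'}\approx 2^a$ and stopping when averages double, combined with the exponential-decay distributional inequality of Theorem~\ref{di} and a Carleson embedding. The log-bump hypothesis enters at exactly one point: the sum over $a$ runs over all $a\in(-\infty,b_0]$ because the two-weight $A_p$ ratio can be arbitrarily small, and Lemma~\ref{interp2} (an interpolation inequality between $\|\cdot\|_{B,Q}$, $\|\cdot\|_{B_0,Q}$ and the plain $L^{p'}$ average, special to log bumps) supplies the factor $2^{a\gamma p}$ that makes this sum converge. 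This is a different obstruction from the one you identify ("quantitative control of $\|u^{1/p}\|_{A,Q}$ under bisection"); no bisection estimate appears in the paper's argument.

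The concrete gap in your plan is the sentence ``the sum over a corona collapses geometrically to its top.'' Stopping on the averages of $h=|f|^p v$ gives $\langle h\rangle_Q\lesssim\langle h\rangle_F$ for $Q$ in the corona of $F$, but the surviving sum $\sum_{Q\in\mathrm{corona}(F)}\|u^{1/p}\|_{A,Q}^{-1}\chi_Q$ still has unbounded overlap, and nothing forces the coefficients $\|u^{1/p}\|_{A,Q}^{-1}$ to decay along a tower of nested cubes: $\bar A\in B_{p'}$ is an integrability condition on the Young function $\bar A$, not a monotonicity or doubling statement for $Q\mapsto\|u^{1/p}\|_{A,Q}$ along the dyadic tree, so it does not ``bound how fast $\|u^{1/p}\|_{A,Q}^{-1}$ can grow.'' Everything therefore hinges on the two-measure Carleson embedding you propose to prove by a Bellman function, which is left as a black box; that embedding is precisely the analogue of what the paper obtains from Theorem~\ref{di} plus Lemma~\ref{interp2}, and it is exactly the step that is open for general $\bar A\in B_{p'}$. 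So your proposal correctly locates the frontier of the problem but, even in the log-bump case, does not yet constitute a proof; to close it in that case you would either need to produce the Bellman function explicitly or fall back on the paper's testing-condition route.
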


\bigskip

We can prove  Conjecture~\ref{c:conjecture-thm} in the special case
when $A,\, B$ are log bumps.

\begin{theorem} \label{thm:conjecture-thm}
Given $p$, $1<p<\infty$, suppose the pair of weights $(u,v)$ satisfies
\eqref{eqn:separated-bumps} and \eqref{eqn:separated-bumps-weak},
where $A$ and $B$ are log bumps of the form \eqref{eqn:left-bump} and \eqref{eqn:right-bump}.
Then every Calder\'on-Zygmund singular integral operator $T$ satisfies
$\|Tf\|_{L^p(u)}\leq C\|f\|_{L^p(v)}$, where $C$ depends only on $T$,
the dimension $d$, and the suprema in
\eqref{eqn:separated-bumps} and \eqref{eqn:separated-bumps-weak}.
\end{theorem}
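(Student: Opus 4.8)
The plan is to reduce the theorem, via the standard machinery, to a dyadic statement about Haar shifts and then prove a quantitative bound for Haar shifts in terms of their complexity. First I would invoke the representation of a general Calder\'on--Zygmund operator as an average of dyadic Haar shift operators $S^{m,n}_{\mathcal D}$ of complexity $(m,n)$ over a family of random dyadic grids (the Hyt\"onen representation theorem). By this representation it suffices to prove that, under the log-bump hypotheses \eqref{eqn:separated-bumps} and \eqref{eqn:separated-bumps-weak} on $(u,v)$, one has
\[
\|S^{m,n}_{\mathcal D} f\|_{L^p(u)} \le C\,(1+\max(m,n))^2\,\|f\|_{L^p(v)},
\]
with $C$ depending only on $p$, $d$, and the suprema in the bump conditions, since the $L^1$-type decay in the representation kernel absorbs the polynomial growth in the complexity. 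This is the content of the abstract's claim that the log/loglog bumps give a constant quadratic in the complexity.

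Next I would fix the dyadic grid and run a Sawyer-type testing/duality argument adapted to the bumped setting. Pairing $S^{m,n}f$ against $g\in L^{p'}(u)$, one expands $S^{m,n}f$ as a sum over cubes $Q$ in the grid of averages of $f$ over a child $Q'$ of $Q$ (at generation $\le\max(m,n)$ below or above) times Haar coefficients of $g$; organizing the sum by a Calder\'on--Zygmund stopping-time decomposition on $f v^{-1}$ (relative to $v$) and on $g u$ (relative to $u$) reduces matters to a Carleson embedding estimate. The key analytic input is P\'erez's sharp maximal-function bound: the separated log-bump conditions \eqref{eqn:separated-bumps}, \eqref{eqn:separated-bumps-weak} are precisely the conditions guaranteeing $M\colon L^p(v)\to L^p(u)$ and $M\colon L^{p'}(u^{1-p'})\to L^{p'}(v^{1-p'})$, and — crucially in the log-bump case — there is an accompanying quantitative estimate for the relevant Carleson/embedding constants with an explicitly controlled dependence. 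One then verifies the Carleson measure condition for the sequence $\{|Q|\,\ave{v^{-1/p}}_{B,Q}^p \cdot (\text{localized mass})\}$ using the $B_p$ integrability of $\bar B$ (and symmetrically for $u$ and $\bar A\in B_{p'}$), invokes the two-weight Carleson embedding theorem (the Bellman-function form developed in the $A_2$-conjecture circle of ideas, as in \cite{NRV1,TVZ}), and sums the resulting bilinear form, tracking the complexity $\max(m,n)$ through the number of generations one must shift.

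The main obstacle will be the quantitative control of the Carleson embedding constant purely in terms of the one-sided log-bump suprema, uniformly in the complexity, so that the final sum over $(m,n)$ in the representation theorem converges: a crude application of P\'erez's estimate loses too much, and one must exploit the logarithmic gain in \eqref{eqn:left-bump}--\eqref{eqn:right-bump} (the role of $\delta,\delta',\delta''>0$) to beat the polynomial-in-complexity factors coming from the shift structure, which is exactly where the argument is special to log bumps and does not extend to general separated bumps. A secondary technical point is bookkeeping the stopping-time families for $f$ and $g$ simultaneously so that the bilinear estimate closes with constants of the correct homogeneity; this is handled by the now-standard sparse/Carleson machinery, but the interaction of the two stopping families across $\max(m,n)$ generations is where the quadratic (rather than linear) dependence on complexity enters.
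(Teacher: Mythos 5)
Your opening move (reduce to Haar shifts via the representation theorem and prove a polynomial-in-complexity dyadic bound) matches the paper's, but the dyadic part of your plan has two genuine gaps. First, you propose to handle the shift directly by a bilinear duality/testing argument with stopping times on $f$ and $g$, closed by a two-weight Carleson embedding. For non-positive Haar shifts and general $p\neq 2$ no such testing theorem is known — the paper explicitly remarks that Theorem~\ref{thm:testing} is open for general shifts when $p\neq 2$. The proof therefore first applies Lerner's local mean oscillation decomposition to dominate $S(f\sigma)$ by $M(f\sigma)$ plus $\tau$ \emph{positive} Haar shift operators $S_{\mathcal L}(|f|\sigma)$ of complexity $(0,\tau-1)$, and only for these positive operators is the $L^p$ Sawyer-type testing theorem (Hyt\"onen--Lacey, HLM+) available. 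Without this reduction to positive operators your bilinear form does not close at general $p$.

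Second, and more seriously, you have misidentified where the log-bump hypothesis enters. You present the logarithmic gain as the tool for beating the polynomial-in-complexity factors; but, as you yourself observe, any polynomial in $\max(m,n)$ is absorbed by the exponential decay in the representation theorem, so complexity is not the obstruction. The real obstruction appears when verifying the testing condition: the principal cubes are stratified by the size $2^a$ of the local characteristic $\langle u\rangle_Q^{1/p}\langle\sigma\rangle_Q^{1/p'}$, and in the two-weight setting this quantity can be arbitrarily small, so $a$ runs over all negative integers and the resulting sum diverges unless one produces geometric decay in $a$. (In the one-weight case $a\geq 0$ automatically, which is why the Hyt\"onen--Lacey argument needs no such input.) The paper manufactures the decay factor $2^{a\gamma p}$ from the interpolation inequality $\|f\|_{B_0,\mu}\le C\|f\|_{B,\mu}^{1-\gamma}\|f\|_{L^{p'}(\mu)}^{\gamma}$ of Lemma~\ref{interp2}, where $B_0$ is a slightly weaker log bump with $\bar B_0$ still in $B_p$; this is the one step that is genuinely special to log bumps, and nothing in your proposal plays this role.
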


Our techniques also immediately yield
Conjecture~\ref{c:conjecture-weak-thm} for log bumps.  This gives a
new proof of the result originally proved
in~\cite{cruz-uribe-perez99}; for completeness we include it here.

\begin{theorem} \label{thm:conjecture-weak-thm}
Given $p$, $1<p<\infty$, suppose the pair of weights $(u,v)$ satisfies
\eqref{eqn:separated-bumps-weak} where $A$ is a log bump of the form \eqref{eqn:left-bump}.
Then every Calder\'on-Zygmund singular integral operator $T$ satisfies
$\|Tf\|_{L^{p,\infty}(u)}\leq C\|f\|_{L^{p}(v)}$, where $C$ depends only on $T$,
the dimension $d$, and the supremum in \eqref{eqn:separated-bumps-weak}.
\end{theorem}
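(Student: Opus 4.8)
The plan is to reduce the weak-type bound for a general Calderón–Zygmund operator $T$ to a weak-type estimate for (nonnegative) Haar shifts of arbitrary complexity, using the representation of $T$ as an average of dyadic shifts, and then to prove that estimate with constants that grow at most polynomially in the complexity — in fact linearly suffices for the weak-type bound because we only need summability of the series $\sum_m \tau^{-m}$ coming from the smoothness of the kernel (here $\tau<1$), so any fixed-power growth in the complexity $m$ is harmless. First I would invoke the standard dyadic reduction: by the Hytönen-type representation theorem (or its Haar-shift formulation used in \cite{NRV1}), it is enough to fix a dyadic grid $\mathcal D$ and to prove that each Haar shift operator $S = S_{\mathcal D}^{i,j}$ of complexity $\max(i,j)=m$ maps $L^p(v)\to L^{p,\infty}(u)$ with norm controlled by a polynomial in $m$ times the supremum in \eqref{eqn:separated-bumps-weak}. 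Moreover, by duality/positivity arguments it suffices to treat the positive dyadic operator $f\mapsto \sum_{Q}\langle f\rangle_Q \mathbf 1_Q$ restricted to a sparse family, or equivalently to bound the dyadic maximal-type sums; this is the point at which the one-sided bump \eqref{eqn:separated-bumps-weak} with $\|v^{-1/p}\|_{p',Q}$ on the light side enters.

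The core estimate is then a Carleson-embedding / stopping-time argument. Writing $\sigma = v^{1-p'}$, I would test the weak-type inequality on a nonnegative $f$, normalize $\int f^p v = 1$, and for $\lambda>0$ form the Calderón–Zygmund stopping cubes $\{Q_j\}$ at level $\lambda$ relative to the $\sigma$-average of $g = f v^{1/p}\sigma^{-1/p}$ (so that $f\,dx = g\,d\sigma$). On each stopping cube the key is to estimate $u(Q_j)$ times $\langle g\rangle_{\sigma,Q_j}^p$ and sum. Here one uses Hölder's inequality in the Orlicz pair $(A,\bar A)$ on the $u$-side together with the plain $L^{p'}$ average of $v^{-1/p}$ on the other side: the condition \eqref{eqn:separated-bumps-weak} gives $\|u^{1/p}\|_{A,Q}\|v^{-1/p}\|_{p',Q}\le \mathcal A$, which converts $u(Q_j)^{1/p}\langle g\rangle_{\sigma,Q_j}$ into something comparable to $\mathcal A\cdot |Q_j|^{1/p}\big(\langle \text{something}\rangle\big)$; the logarithmic gain in $A$ (the extra $\log^{p-1+\delta}$) is exactly what is needed to run a logarithmically-refined Carleson embedding in the spirit of \cite{cruz-uribe-perez99}, or alternatively a Bellman-function Carleson embedding as in \cite{NRV1}, so that the sum over stopping cubes is bounded by $C_\delta\,\mathcal A^p\int f^p v$. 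Summing the geometric series over the shift parameters and over $m$ via the kernel smoothness closes the argument.

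The main obstacle I expect is \emph{tracking the dependence on the complexity} $m$ through the stopping-time/Carleson step while still exploiting the $\log$-bump gain. In the strong-type Theorem 1.3 one needs a bound quadratic in $m$; for the weak-type statement the bookkeeping is easier but one must still verify that the Orlicz Hölder step and the Carleson embedding constant depend on $m$ only polynomially (and not, say, exponentially through iterated stopping-time nesting across the $m$ ``generations'' inside a complexity-$m$ shift). The way I would handle this is to peel the complexity-$m$ Haar shift into a bounded number (linear in $m$) of simpler positive dyadic operators supported on disjoint generations, bound each by the complexity-zero Carleson embedding with the \emph{same} $\log$-bump constant, and add. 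A secondary technical point is the precise form of the duality between the weak-$L^p(u)$ norm and a sparse linear form; this is routine but must be stated carefully so the Orlicz norm that appears is $\|u^{1/p}\|_{A,Q}$ and not $\|u^{1/p}\|_{p,Q}$. Once these are in place, Theorem 1.4 follows; indeed the same machinery, run with the two-sided hypotheses \eqref{eqn:separated-bumps} and \eqref{eqn:separated-bumps-weak} and a second application of Orlicz Hölder on the dual side, yields Theorem 1.3.
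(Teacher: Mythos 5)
Your overall architecture is the right one and matches the paper's in outline: reduce to Haar shifts with polynomially controlled complexity dependence via the representation theorem, then run a stopping-time/Carleson-embedding argument in which the one-sided bump \eqref{eqn:separated-bumps-weak} is the only hypothesis used. The paper organizes the dyadic step differently from what you sketch: it first applies Lerner's decomposition to reduce to positive shifts, then invokes the Sawyer-type result of Hyt\"onen et al.\ (Theorem~\ref{thm:testing-weak}), which for the weak-type bound requires verifying only the single testing condition $\|\chi_Q S^*(\chi_Q u)\|_{L^p(\sigma)}\le C u(Q)^{1/p}$, and that testing condition is then bounded by a corona decomposition plus the exponential-decay distributional inequality of Lacey--Petermichl--Reguera. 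Your proposed route through Calder\'on--Zygmund stopping cubes at level $\lambda$ is closer to the original argument of \cite{cruz-uribe-perez99} and is a legitimate alternative skeleton.

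The genuine gap is that the one step where the logarithm in $A$ is actually needed is asserted rather than proved. You write that the extra $\log^{p-1+\delta}$ ``is exactly what is needed to run a logarithmically-refined Carleson embedding,'' but you never say what goes wrong without it or how the gain is extracted. The difficulty is this: after the Orlicz--H\"older step one is left with a sum over corona (stopping) generations indexed by the size $2^a$ of the local quantity $\langle u\rangle_Q^{1/p}\langle\sigma\rangle_Q^{1/p'}$, and in the two-weight setting this can be arbitrarily small, so $a$ ranges over all negative integers and a naive Carleson embedding applied generation by generation gives a divergent sum. The paper resolves this with Lemma~\ref{interp2}: for log bumps one has the interpolation inequality $\|f\|_{B_0,\mu}\le C\|f\|_{B,\mu}^{1-\gamma}\|f\|_{L^{p'}(\mu)}^{\gamma}$ with a slightly smaller log bump $B_0$ still in the $B_p$ class, and this trades a $\gamma$-fraction of the bump hypothesis for the factor $(\langle u\rangle_Q^{1/p}\langle\sigma\rangle_Q^{1/p'})^{\gamma p}\lesssim 2^{a\gamma p}$, which makes the sum over $a\to-\infty$ geometric. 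Without identifying and proving some version of this self-improvement (or the equivalent mechanism in the Cruz-Uribe--P\'erez approach), your Carleson-embedding step does not close. Two smaller points: your closing claim that ``the same machinery yields Theorem~\ref{thm:conjecture-thm}'' understates the strong-type case, which additionally requires the maximal truncation testing theorem with both testing conditions and the second bump hypothesis; and the complexity dependence the paper obtains is cubic, not quadratic, in $\tau$.
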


\begin{zamech}
Theorems~\ref{thm:conjecture-thm} and~\ref{thm:conjecture-weak-thm}
are both sharp, in the sense that if we take $\delta=0$ in the
definition of $A$ or $B$, then there exist
pairs of weights that satisfy the bump conditions but such that the
corresponding norm inequalities are false.  For details,
see~\cite{CU-M-P-book}.
\end{zamech}

Our proof of Theorems~\ref{thm:conjecture-thm}
and~\ref{thm:conjecture-weak-thm} depends heavily on the machinery
developed to prove the one-weight $A_2$ conjecture \cite{CU-Ma-Pe,H, HPTV}.  In
turn many of the techniques used to prove the $A_2$ conjecture have
their genesis in nonhomogeneous Harmonic Analysis.  In particular, they
go back to the random geometric constructions introduced
in~\cite{NTV97, NTV98, NTV-acta}.  For a  summary of these results,
see~\cite{Vo}.

\medskip

Our method of proof can also be adapted to prove
Conjectures~\ref{c:conjecture-thm} and~\ref{c:conjecture-weak-thm} for
a subset of the class of bump functions referred to as loglog-bumps:
\begin{equation}
\label{llA}
A(t) = t^p\log(e+t)^{p-1}\log\log(e^e+t)^{p-1+\delta} \quad \bar{A}(t) \approx
\frac{t^{p'}}{\log(e+t)\log\log (e^e +t)^{1+\delta'}},
\end{equation}
\begin{equation}
 \label{llB}
B(t)=t^{p'}\log(e+t)^{p'-1}\log\log(e^e+t)^{p'-1+\delta}, \quad \bar{B}(t) \approx
 \frac{t^{p}}{\log(e+t)\log\log (e^e +t)^{1+\delta''}},
 \end{equation}
where $\delta>0$.  These bump conditions are well known, but have been
difficult to work with.  As was noted in~\cite[p.~107]{CU-M-P-book},
up until now, no results were known for loglog-bumps that were not
proved for bump conditions in general.  However, we can prove the
following results, both of which are new.

\begin{theorem} \label{thmllst}
Given $p$, $1<p<\infty$, suppose the pair of weights $(u,v)$
satisfy~\eqref{eqn:separated-bumps} and~\eqref{eqn:separated-bumps-weak}, where $A$ and $B$ are loglog-bumps of
the form \eqref{llA} and \eqref{llB} with {\bf $\delta$ sufficiently
large}.   Then every Calder\'on-Zygmund singular integral operator $T$
satisfies $\|Tf\|_{L^p(u)} \leq C\|f\|_{L^p(v)}$, 
 where $C$ depends only on $T$, the dimension $d$ and the suprema
in \eqref{eqn:separated-bumps} and~\eqref{eqn:separated-bumps-weak}.
\end{theorem}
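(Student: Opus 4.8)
The plan is to reduce Theorem~\ref{thmllst} to the corresponding statement for Haar shifts, exactly as in the proof of Theorem~\ref{thm:conjecture-thm}, and then to push the log-bump argument one logarithm further. By the decomposition of Calder\'on--Zygmund operators into dyadic shifts (the machinery of \cite{H, HPTV}), it suffices to prove that a Haar shift $S$ of complexity $(m,n)$ maps $L^p(v)\to L^p(u)$ with norm growing at most polynomially (we expect quadratically) in $\max(m,n)$, under the separated loglog-bump hypotheses~\eqref{eqn:separated-bumps} and~\eqref{eqn:separated-bumps-weak}; summing over the complexities against the exponential decay of the coefficients in the dyadic representation then yields the result for $T$. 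So the whole problem becomes a statement about a single dyadic shift, and the loglog bumps enter only through the estimation of certain localized Orlicz averages.

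The key steps, in order, are as follows. First, linearize and dualize: test $\langle Sf, g\rangle$ against $f\in L^p(v)$ and $g\in L^{p'}(u)$, expand $S$ over its Haar coefficients, and reorganize the resulting bilinear sum into a sum over a sparse (Carleson) family of cubes of terms of the form $|Q|\,\langle |f| v^{1/p} v^{-1/p}\rangle_Q \langle |g| u^{1/p'} u^{-1/p'}\rangle_Q$, paying a factor that is polynomial in the complexity for the passage from shift-cubes to their ``halo'' of ancestors. Second, on each cube apply the generalized H\"older inequality in Orlicz space: $\langle |f| v^{-1/p}\cdot v^{1/p}\rangle_Q \le \|v^{1/p}\|_{B,Q}\,\||f|v^{-1/p}\|_{\bar B,Q}$, and the separated bumps~\eqref{eqn:separated-bumps}, \eqref{eqn:separated-bumps-weak} then replace $\|v^{1/p}\|_{B,Q}\|u^{1/p}\|_{p,Q}$ and its dual by a uniform constant, leaving an Orlicz-average square function controlled by the $\bar A$- and $\bar B$-norms of $|g|u^{-1/p'}$ and $|f|v^{-1/p}$. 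Third — the heart of the matter — bound the resulting ``Orlicz--Carleson'' expression by $C\|f\|_{L^p(v)}\|g\|_{L^{p'}(u)}$: this is where one invokes, as in Theorem~\ref{thm:conjecture-thm}, a Carleson embedding theorem for the square function of $\bar B$-Orlicz averages, whose proof is a Bellman-function / stopping-time argument. Since $\bar B(t)\approx t^p/(\log(e+t)\log\log(e^e+t)^{1+\delta''})$ is a perturbation of $t^p$ by a product of a log and a loglog factor, the stopping-time tree has two layers of stopping cubes instead of one, and the Carleson constant one extracts involves a sum $\sum_k 1/(\log k)\cdot 1/(\log\log k)^{1+\delta''}$-type series that converges precisely because $\delta''>0$; the requirement that $\delta$ be \emph{sufficiently large} rather than merely positive is exactly the price for the polynomial-in-complexity factor picked up in the first step, which must be absorbed before summing the series.

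The main obstacle, and the only genuinely new ingredient beyond Theorem~\ref{thm:conjecture-thm}, is the third step: proving the loglog-weighted Carleson embedding with a constant that is, first, finite (this needs the two-tier stopping time and the convergence of the associated series, hence $\delta>0$), and second, compatible with the polynomial growth in the shift complexity coming from the ancestor/halo reorganization. Concretely, one runs the stopping-time construction so that a child-generation is declared when an Orlicz average roughly doubles; the loglog profile makes these generations proliferate, and one must show the total mass over each stopping tree is still summable after multiplication by the complexity factor $(\max(m,n))^{c}$. This forces $\delta$ past a threshold $\delta_0(p)$ depending on that exponent $c$ — which is why the theorem is stated with ``$\delta$ sufficiently large'' — whereas for genuine log bumps the extra logarithm is absent and any $\delta>0$ suffices. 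All remaining steps (the dyadic representation, the generalized H\"older inequality, summation over complexities) are by now standard and are carried out as in the proof of Theorems~\ref{thm:conjecture-thm} and~\ref{thm:conjecture-weak-thm}.
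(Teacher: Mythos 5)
Your overall architecture (reduce to Haar shifts with polynomial dependence on the complexity, then handle a single shift via sparse/Carleson machinery and the generalized H\"older inequality) matches the paper's, but the step you yourself flag as ``the heart of the matter'' is exactly where the proposal has a genuine gap, and your diagnosis of why $\delta$ must be large is wrong. The ``Orlicz--Carleson embedding'' you invoke in step three is never formulated or proved; the series you point to as the source of the threshold is essentially the $B_p$ integral condition for $\bar B(t)\approx t^{p}/(\log(e+t)\log\log(e^{e}+t)^{1+\delta''})$, namely $\int^{\infty} \frac{dt}{t\,\log t\,(\log\log t)^{1+\delta''}}$, which converges for \emph{every} $\delta>0$ (and as literally written, $\sum_k (\log k)^{-1}(\log\log k)^{-1-\delta''}$ diverges for all $\delta''$). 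So this mechanism cannot explain why ``$\delta$ sufficiently large'' is needed, and indeed the boundedness of $M_{\bar B}$ on $L^p$, which is all that this convergence buys, holds for any $\delta>0$ and is used in the paper without restriction.

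The actual obstruction, and the paper's new ingredient, lives elsewhere. After the corona decomposition one must sum over the levels $a$ with $2^{a}\le \langle u\rangle_Q^{1/p}\langle\sigma\rangle_Q^{1/p'}<2^{a+1}$; in the two-weight setting the local $A_p$ characteristic can be arbitrarily small, so $a$ runs down to $-\infty$ and one needs decay in $|a|$. This is produced by an interpolation inequality between Orlicz norms: for log bumps, $\|f\|_{B_0,\mu}\le C\|f\|_{B,\mu}^{1-\gamma}\|f\|_{L^{p'}(\mu)}^{\gamma}$ (Lemma~\ref{interp2}), giving geometric decay $2^{a\gamma p}$ and hence any $\delta>0$ suffices; for loglog bumps one only gets $\|f\|_{B_0,\mu}\le C\|f\|_{B,\mu}\,\varepsilon(\|f\|_{L^{p'}(\mu)}/\|f\|_{B,\mu})$ with $\varepsilon(t)=(\log(C/t))^{-\kappa}$, which yields decay of order $|a|^{-p\kappa}$, and $\sum_{a<0}|a|^{-p\kappa}$ converges only if $p\kappa>1$, i.e.\ $\delta>(p-1)^{-1}$ (dually $(p'-1)^{-1}$). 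Note in particular that this threshold depends only on $p$ and is completely independent of the shift complexity $\tau$: the complexity enters as a fixed polynomial factor $\tau^{3}$ uniformly over all shifts, which is then summed against the exponentially decaying coefficients of Hyt\"onen's representation. If, as you propose, the threshold on $\delta$ had to absorb the complexity exponent, the constant would not be uniform in the shift and the reduction from $T$ to dyadic shifts would fail. To repair the proposal you would need to supply the interpolation estimate between the $B$-norm, the slightly smaller $B_0$-norm, and the $L^{p'}$-norm, and track how its (merely logarithmic, for loglog bumps) gain propagates into the sum over $a$.
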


\begin{theorem} \label{thmllw}
Given $p$, $1<p<\infty$, suppose the pair of weights $(u,v)$
satisfies~\eqref{eqn:separated-bumps-weak} where $A$ is a  loglog-bump of
the form \eqref{llA} with {\bf $\delta$ sufficiently
large}.   Then every Calder\'on-Zygmund singular integral operator $T$
satisfies $\|Tf\|_{L^{p,\infty}(u)} \leq C\|f\|_{L^p(v)}$, 
 where $C$ depends only on $T$, the dimension $d$ and the supremum
 in~\eqref{eqn:separated-bumps-weak}. 
\end{theorem}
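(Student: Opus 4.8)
The plan is to combine Hyt\"onen's dyadic representation theorem with a two‑weight stopping‑time (corona) argument, exploiting the fact that the weak‑type inequality, unlike the strong‑type Theorem~\ref{thmllst}, costs only the single bump \eqref{eqn:separated-bumps-weak}. First I would reduce to dyadic model operators: by the representation theorem \cite{H}, an arbitrary Calder\'on--Zygmund operator is an average, over random dyadic grids, of Haar shifts $S$ whose coefficients decay exponentially in the complexity $\kappa$ of $S$. Since $p>1$, the quasi‑norm on $L^{p,\infty}(u)$ is equivalent to the genuine norm $\|h\|_{*}:=\sup\{u(F)^{-1/p'}\int_F|h|\,u:0<u(F)<\infty\}$, so this average may be estimated term by term. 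Writing $\sigma:=v^{1-p'}$, it then suffices to prove, for every Haar shift $S$ of complexity $\kappa$ and every set $F$ with $0<u(F)<\infty$,
\begin{equation*}
\Big|\int_F S(g\sigma)\,u\Big|\ \le\ C\,P(\kappa)\,\|g\|_{L^p(\sigma)}\,u(F)^{1/p'},
\end{equation*}
with $C$ depending only on $p$, $d$ and the supremum in \eqref{eqn:separated-bumps-weak}, and with $P(\kappa)$ growing at most polynomially in $\kappa$ (quadratically, as in the strong‑type Haar‑shift estimate); summing against the exponentially small coefficients then yields Theorem~\ref{thmllw}.

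Next I would expand the bilinear form $\int_F S(g\sigma)\,u=\langle g\sigma,\ S^{*}(\sig(S(g\sigma))\,\mathbf{1}_F\,u)\rangle$ over the Haar structure of $S$ and run a stopping‑time decomposition of $g$ relative to $\sigma$, exactly as in the proof of Theorem~\ref{thm:conjecture-weak-thm} for log bumps. The decisive simplification over the strong‑type problem is that the second argument $\sig(S(g\sigma))\,\mathbf{1}_F$ is bounded, so no corona decomposition against $u$ is needed and the $A$‑bump on $u^{1/p}$ alone suffices. The resulting sum splits into a local part — dominated, via a $\sigma$‑weighted dyadic Carleson embedding theorem, by $\|g\|_{L^p(\sigma)}$ at the cost of the complexity factor $P(\kappa)$ — and a global part, in which $u$ appears only through averages over the $\kappa$‑fold dyadic ancestors of the stopping cubes, the bad part of $S(g\sigma)$ being essentially supported on the union of those ancestors. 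The global part is then controlled by the Orlicz maximal operator $M_A$ applied to $u^{1/p}\mathbf{1}_F$ together with the Carleson property of the stopping cubes; here one uses P\'erez's theorem that $M_A$ is bounded on $L^p(\R^d)$ precisely when $\bar A\in B_{p'}$, which holds for the loglog bump \eqref{llA} for every $\delta>0$.

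The main obstacle, and the source of the hypothesis ``$\delta$ sufficiently large'', is to make the complexity dependence of the global part polynomial in $\kappa$ within the loglog scale. When the complexity $\kappa$ is carried through the Orlicz--H\"older inequalities and the $B_{p'}$‑integral underlying the $M_A$‑estimate, one is forced to work with $A$‑averages over dilates of dyadic cubes on the scale of the complexity, and in the loglog scale the price of this is governed by $\delta$: it is controlled (polynomially in $\kappa$, hence summably against Hyt\"onen's exponential decay) only once $\delta$ exceeds a threshold depending on $p$ and $d$, whereas a genuine log bump of the form \eqref{eqn:left-bump} absorbs it for every $\delta>0$, which is exactly why Theorem~\ref{thm:conjecture-weak-thm} carries no such restriction. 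The hard part of the write‑up is therefore the quantitative, complexity‑tracking version of the loglog $M_A$‑estimate; the remainder is a direct adaptation of the log‑bump argument.
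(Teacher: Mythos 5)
Your overall architecture---reduction to Haar shifts with polynomial complexity bounds via Hyt\"onen's representation theorem, dualization of the $L^{p,\infty}(u)$ quasi-norm, a corona decomposition, and the observation that the weak-type estimate costs only the single bump \eqref{eqn:separated-bumps-weak}---matches the paper's Sections~\ref{section:prelim}, \ref{section:proof-weak} and \ref{ll}, where the reduction is packaged in the testing theorem of Hyt\"onen \emph{et al.} (Theorem~\ref{thm:testing-weak}): the weak norm of $S(\cdot\,\sigma)$ is controlled by the maximal-function term plus the single testing constant $\sup_Q \|\chi_Q S^*(\chi_Q u)\|_{L^p(\sigma)}/u(Q)^{1/p}$, which is then verified by the stopping-time argument of Section~\ref{section:proof} combined with the exponential-decay distributional inequality of Theorem~\ref{di}. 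Up to that point your plan is sound.

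The genuine gap is in your diagnosis of where ``$\delta$ sufficiently large'' comes from, and consequently in the absence of the one lemma that actually makes the loglog case close. The threshold on $\delta$ has nothing to do with tracking the shift's complexity (your $\kappa$, the paper's $\tau$) through the $M_A$ bound: as you yourself note, $\bar A\in B_{p'}$ for \emph{every} $\delta>0$, and the complexity enters only through the Lerner/Hyt\"onen--Lacey reduction and Theorem~\ref{di}, polynomially and uniformly in the choice of bump. The real obstruction is the summation over the corona parameter $a$: the stopping cubes are stratified by $\av{u}{Q}^{1/p}\av{\sigma}{Q}^{1/p'}\approx 2^a$, and in the two-weight setting this local $A_p$ characteristic has no lower bound, so $a$ runs over all negative integers. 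To sum the resulting series one must extract from the bump hypothesis a factor decaying in $a$. The paper does this by replacing the bump by a slightly smaller one (halving $\delta$) and proving the self-improvement inequality \eqref{eps}, namely $\|f\|_{B_0,\mu}\le C\|f\|_{B,\mu}\,\varepsilon\bigl(\|f\|_{L^{p'}(\mu)}/\|f\|_{B,\mu}\bigr)$. For log bumps the gain $\varepsilon(t)$ is a power $t^{\gamma}$ (Lemma~\ref{interp2}), the series over $a$ is geometric, and every $\delta>0$ works; for loglog bumps the gain is only logarithmic, $\varepsilon(t)=(\log (C/t))^{-\kappa(p,\delta)}$, the series becomes $\sum_{a<0}\bigl[2^{pa/2}+C|a|^{-p\kappa}\bigr]$, and convergence forces $p\kappa>1$, i.e.\ $\delta>(p'-1)^{-1}$ in \eqref{llA} --- a threshold depending on $p$ alone, not on $d$ or on the complexity. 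Without this interpolation step your corona sum diverges, so the argument as proposed does not close; with it, the rest of your outline goes through as in the log-bump case.
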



\begin{zamech}
It will follow from the proof that it suffices to take
$\delta>(p'-1)^{-1}$ in~\eqref{llA} and $\delta>(p-1)^{-1}$
in~\eqref{llB}.  What is important is that we cannot take any
$\delta>0$ as conjectured.  This restriction should be compared to the
restriction $p>n$ in~\cite{CU-Ma-Pe07}, or the restriction that
$\delta>1$ given in for certain special classes of weights (the
so-called factored weights) in~\cite[Section~9.2]{CU-M-P-book}.  
\end{zamech}

The remainder of this paper is organized as follows.  In
Section~\ref{section:prelim} we reformulate our results and reduce the
problem to proving the corresponding results for a general class of
dyadic shift operators (Theorem~\ref{thm:dyadic-result}
and~\ref{thm:dyadic-result-weak}).  In Section~\ref{section:proof} we
prove Theorem~\ref{thm:dyadic-result}.  It is important to note that
in most of the proof we only need to assume that $\bar{A}\in B_{p'},
\bar{B}\in B_p$; only at one step are we forced to assume that $A,\,B$
are log bumps.  In Section~\ref{section:proof-weak} we describe the
(minor) changes required to prove
Theorem~\ref{thm:dyadic-result-weak}.   In Section ~\ref{ll}, we 
reformulate and prove Theorems~\ref{thmllst} and~\ref{thmllw}. 
Finally, in Section~\ref{ce} we
show that the Muckenhoupt-Wheeden conjecture for the weak-type
inequality is false.  

\bigskip

\noindent{\bf Acknowledgements.} The authors are very grateful to the
American Institute of Mathematics, where this work was essentially
done, for their hospitality.
The  authors also want to thank Michael Lacey for his insightful remarks.

\section{Preliminary Results}
\label{section:prelim}

Hereafter, we will use the notation
\[ \av{f}{Q}=\frac{1}{|Q|}\ili_Q f(x)dx. \]
We also restate our weighted norm inequalities in an equivalent form.
Let $\sigma=v^{1-p'}$; then we can rewrite \eqref{eqn:separated-bumps}
and \eqref{eqn:separated-bumps-weak}
as
\begin{gather}
\label{eqn:alt-separated-bumps}
\sup_Q \av{u}{Q}^{1/p} \|\sigma^{1/p'}\|_{B,Q}  < \infty, \\
\label{eqn:alt-separated-bumps-weak}
 \sup_Q \|u^{1/p}\|_{A,Q} \av{\sigma}{Q}^{1/p'}  < \infty.
\end{gather}
By the properties of the Luxemburg norm we have that either condition
implies the
two-weight $A_p$ condition:
\begin{equation} \label{eqn:alt-twowt-Ap}
 \sup_Q \av{u}{Q}^{1/p} \av{\sigma}{Q}^{1/p'}  < \infty.
\end{equation}
Similarly, we can restate the conclusions of
Theorems~\ref{thm:conjecture-thm} and~\ref{thm:conjecture-weak-thm}
as
\[ \|T(f\sigma)\|_{L^p(u)}\leq C\|f\|_{L^p(\sigma)}, \qquad \|T(f\sigma)\|_{L^{p,\infty}(u)}\leq C\|f\|_{L^p(\sigma)}. \]

\medskip

The $B_p$ condition is closely connected to a generalization of the
maximal operator.  Recall that the Hardy-Littlewood maximal operator
is defined to be
\[ Mf(x) = \sup_{Q\ni x} \av{|f|}{Q}= \sup_{Q\ni x} \|f\|_{1,Q}. \]
Given a Young function $A$, we define the Orlicz maximal operator
$M_A$ by
\[ M_A f(x):=\sup_{Q\ni x} \|f\|_{A,Q}. \]
The following result is due to P\'erez \cite{Pe} (see also
\cite{CU-M-P-book}).

\begin{theorem} \label{thm:perez-orlicz-max}
Fix $p$, $1<p<\infty$, and let $A$ be a Young function such that $A\in
B_p$.  Then $M_A : L^p \rightarrow L^p$.
\end{theorem}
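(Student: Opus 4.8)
The plan is to reduce to a dyadic Orlicz maximal operator, decompose the level sets by a Calder\'on--Zygmund stopping argument, and then \emph{localize} $f$ at height proportional to $\lambda$ before integrating the distribution function; the hypothesis $A\in B_p$ enters at exactly one point, to guarantee the convergence of a single integral. This is the argument of P\'erez, which I recall here.

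\emph{Reduction to the dyadic case.} By the standard ``$1/3$-trick'' there are finitely many dyadic grids $\Delta_1,\dots,\Delta_N$ such that every cube $Q\subset\R^d$ is contained in some $Q'\in\Delta_i$ with $|Q'|\le C_d|Q|$. Convexity of $A$ (together with $A(0)=0$) gives $A(s/C_0)\le C_0^{-1}A(s)$ for $C_0\ge 1$, hence $\|f\|_{A,Q}\le C_d\|f\|_{A,Q'}$ whenever $Q\subset Q'$ and $|Q'|\le C_d|Q|$; therefore $M_Af\lesssim\sum_{i=1}^N M_A^{\Delta_i}f$ pointwise, where $M_A^{\Delta}$ is the dyadic Orlicz maximal operator for the grid $\Delta$. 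So it suffices to prove $M_A^{\Delta}:L^p\to L^p$ for a single dyadic grid; fix one, abbreviate $M_A^{\Delta}=M_A^d$, and assume $f\ge 0$.

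\emph{Decomposition and localization.} For $\lambda>0$ let $\{Q_j^\lambda\}_j$ be the maximal dyadic cubes on which $\|f\|_{A,Q_j^\lambda}>\lambda$; these are pairwise disjoint and their union is $\{M_A^df>\lambda\}$. Set $t_0:=A^{-1}(1)/2>0$ and split $f=f\chi_{\{f>t_0\lambda\}}+f\chi_{\{f\le t_0\lambda\}}$. Since $\|\chi_{Q}\|_{A,Q}=A^{-1}(1)^{-1}$, monotonicity of the Luxemburg norm gives $\|f\chi_{\{f\le t_0\lambda\}}\|_{A,Q_j^\lambda}\le t_0\lambda/A^{-1}(1)=\lambda/2$, so the triangle inequality forces $\|f\chi_{\{f>t_0\lambda\}}\|_{A,Q_j^\lambda}>\lambda/2$. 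By the definition of the Luxemburg norm this yields $|Q_j^\lambda|\le\int_{Q_j^\lambda\cap\{f>t_0\lambda\}}A\!\big(2f(x)/\lambda\big)\,dx$, and summing over the disjoint family, $|\{M_A^df>\lambda\}|\le\int_{\{f>t_0\lambda\}}A\!\big(2f(x)/\lambda\big)\,dx$.

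\emph{Integration and use of $B_p$.} Writing $\|M_A^df\|_p^p=p\int_0^\infty\lambda^{p-1}|\{M_A^df>\lambda\}|\,d\lambda$ and using Fubini, the above bound gives $\|M_A^df\|_p^p\le p\int_{\R^d}\big(\int_0^{f(x)/t_0}\lambda^{p-1}A(2f(x)/\lambda)\,d\lambda\big)\,dx$. In the inner integral the substitution $\lambda=2f(x)/t$ converts the $\lambda$-range $(0,f(x)/t_0)$ into the $t$-range $(2t_0,\infty)$ and produces $(2f(x))^p\int_{2t_0}^\infty\frac{A(t)}{t^p}\frac{dt}{t}$; this last integral is finite precisely because $A\in B_p$ (the lower endpoint $2t_0>0$ causes no trouble since $A$ is finite). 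Hence $\|M_A^df\|_p^p\le C_p\big(\int_{2t_0}^\infty\frac{A(t)}{t^p}\frac{dt}{t}\big)\|f\|_p^p$, and combining with the first step completes the proof. The point that must not be skipped is the localization: the cruder bound $|Q_j^\lambda|\le\int_{Q_j^\lambda}A(f/\lambda)$ leads after integration to $\int_0^\infty\frac{A(t)}{t^p}\frac{dt}{t}$, which diverges at $t=0$ (for instance $A(t)\sim t^p$ near the origin for log bumps), so one really must discard the part of $f$ below height $t_0\lambda$ to confine the $t$-integral to a region where $B_p$ applies.
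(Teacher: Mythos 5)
Your argument is correct and is precisely the standard Calder\'on--Zygmund/localization proof of P\'erez that the paper invokes by citation to \cite{Pe} and \cite{CU-M-P-book} rather than reproving. The one point worth flagging explicitly if you write this up is the existence of the maximal stopping cubes (i.e.\ that $\|f\|_{A,Q}\to 0$ as $|Q|\to\infty$ for, say, bounded compactly supported $f$, with the general case by density); otherwise every step, including the key localization at height $t_0\lambda$ that confines the $t$-integral to $(2t_0,\infty)$ where the $B_p$ condition applies, is sound.
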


The $B_p$ condition is also sufficient for a two-weight norm
inequality for the Hardy-Littlewood maximal operator.  This result is
also due to P\'erez \cite{Pe,CU-M-P-book}.

\begin{theorem} \label{thm:perez-twowt-max}
Fix $p$, $1<p<\infty$, and let $B$ be a Young function such that
$\bar{B}\in B_p$.  If the pair of weights $(u,\sigma)$ satisfies
\begin{equation} \label{eqn:max1}
 \sup_Q \av{u}{Q}^{1/p} \|\sigma^{1/p'}\|_{B,Q}  < \infty,
\end{equation}
then
\begin{equation} \label{eqn:max2}
 \|M(f\sigma)\|_{L^p(u)} \leq C\|f\|_{L^p(\sigma)}.
\end{equation}
\end{theorem}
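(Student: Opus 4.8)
The plan is to run the standard Calder\'on--Zygmund/stopping-time argument for the dyadic maximal operator and then reduce the resulting sum over stopping cubes to an $L^p$ bound for the Orlicz maximal operator $M_{\bar B}$, which is available by Theorem~\ref{thm:perez-orlicz-max} since $\bar B\in B_p$. First I would reduce to the dyadic case: using the one-third (several shifted grids) trick one has $Mg\lesssim_d M^\Delta_1 g+\dots+M^\Delta_N g$ for finitely many translated dyadic grids, and the hypothesis \eqref{eqn:max1} is a supremum over \emph{all} cubes, hence applies verbatim to every shifted grid; so it suffices to prove \eqref{eqn:max2} with $M$ replaced by a dyadic maximal operator $M^\Delta$. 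Fix $a=2^{d+1}$ and let $\Omega_k=\{M^\Delta(f\sigma)>a^k\}=\bigcup_j Q^k_j$, the $Q^k_j$ being the maximal dyadic cubes with $\av{f\sigma}{Q^k_j}>a^k$. Passing to the dyadic parent gives $\av{f\sigma}{Q^k_j}\le 2^d a^k<a^{k+1}$. Setting $E^k_j=Q^k_j\setminus\Omega_{k+1}$, a routine computation shows $|\Omega_{k+1}\cap Q^k_j|\le \tfrac12|Q^k_j|$, so $|E^k_j|\ge\tfrac12|Q^k_j|$, and since $\{\Omega_k\}$ is decreasing the sets $\{E^k_j\}$ are pairwise disjoint over all $(j,k)$.

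Next I would estimate, using the layer-cake formula over the $\Omega_k\setminus\Omega_{k+1}$,
\[
\|M^\Delta(f\sigma)\|_{L^p(u)}^p\le\sum_k a^{(k+1)p}\,u(\Omega_k\setminus\Omega_{k+1})\le a^p\sum_{j,k}\av{f\sigma}{Q^k_j}^p\,u(Q^k_j).
\]
For each stopping cube $Q=Q^k_j$, write $f\sigma=(f\sigma^{1/p})(\sigma^{1/p'})$ and apply the generalized H\"older inequality for the complementary pair $(\bar B,B)$ in the Luxemburg scale:
\[
\av{f\sigma}{Q}\le 2\,\|f\sigma^{1/p}\|_{\bar B,Q}\,\|\sigma^{1/p'}\|_{B,Q}.
\]
Combining this with $u(Q)=\av{u}{Q}|Q|\le 2\av{u}{Q}|E^k_j|$ and the bump bound \eqref{eqn:max1}, namely $\av{u}{Q}^{1/p}\|\sigma^{1/p'}\|_{B,Q}\le C_0$, the weight $u$ and the function $\sigma^{1/p'}$ disappear and one is left with
\[
\av{f\sigma}{Q^k_j}^p\,u(Q^k_j)\le C\,\|f\sigma^{1/p}\|_{\bar B,Q^k_j}^p\,|E^k_j|\le C\int_{E^k_j}\big(M^\Delta_{\bar B}(f\sigma^{1/p})\big)^p\,dx,
\]
since $\|f\sigma^{1/p}\|_{\bar B,Q^k_j}\le \inf_{x\in E^k_j}M^\Delta_{\bar B}(f\sigma^{1/p})(x)$. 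Summing over $(j,k)$ and using the disjointness of the $E^k_j$ yields $\|M^\Delta(f\sigma)\|_{L^p(u)}^p\le C\|M_{\bar B}(f\sigma^{1/p})\|_{L^p}^p$.

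To finish, apply Theorem~\ref{thm:perez-orlicz-max}: because $\bar B\in B_p$, $M_{\bar B}:L^p\to L^p$, so
\[
\|M_{\bar B}(f\sigma^{1/p})\|_{L^p}^p\le C\int_{\R^d}|f|^p\sigma\,dx=C\|f\|_{L^p(\sigma)}^p,
\]
which combined with the previous chain of inequalities and the reduction to shifted dyadic grids gives \eqref{eqn:max2}. The step I expect to be the true heart of the matter is the H\"older/bump manipulation: it is precisely the use of the Orlicz-scale H\"older inequality that lets the bump condition on $(u,\sigma)$ be traded for an $L^p$ estimate on a single Orlicz maximal operator, and this is the place where the choice of the ``right'' bump $B$ (so that its conjugate $\bar B$ lies in $B_p$) is forced. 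The remaining ingredients --- the reduction to the dyadic setting, the construction of the sparse family $\{E^k_j\}$, and the bookkeeping of the geometric constants $a=2^{d+1}$ --- are standard and cause no essential difficulty.
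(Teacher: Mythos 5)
Your proof is correct: the paper itself does not prove Theorem~\ref{thm:perez-twowt-max} but simply cites P\'erez \cite{Pe} and \cite{CU-M-P-book}, and your argument (shifted dyadic grids, the stopping cubes $Q_j^k$ with $a=2^{d+1}$ and disjoint sets $E_j^k$, generalized H\"older for the pair $(\bar B,B)$ to trade the bump constant for $M_{\bar B}$, and then Theorem~\ref{thm:perez-orlicz-max}) is exactly the standard proof given in those references. No gaps beyond the usual harmless reduction to bounded, compactly supported $f$.
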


\begin{zamech}
The bump condition \eqref{eqn:max1} is necessary in the following
sense:  suppose that $B$ is a function such that
whenever~\eqref{eqn:max1} holds, the maximal operator satisfies
\eqref{eqn:max2}.  Then $\bar{B}\in B_p$.  See~\cite{Pe}.
\end{zamech}

\bigskip

We now turn to the definition of the dyadic Haar shift operators that
will replace an arbitrary Calder\'on-Zygmund operator.

\begin{defin}
Given a dyadic cube $Q$, $h_Q$ is a (generalized) Haar function
associated to a cube $Q$ if
$$
h_Q(x)=\sli_{Q'\in ch(Q)} c_{Q'} \chi_{Q'}(x),
$$
where $ch(Q)$ is the set of dyadic children of $Q$ and $|c_{Q'}|\leq 1$.
\end{defin}

\begin{defin}
We say that an operator $S$ has a Haar shift kernel of complexity $(m,n)$ if
$$
Sf(x) = \sli_{Q} S_Q(f),
$$
where
$$
S_Q(f)=\frac{1}{|Q|}\sum_{\substack{Q', Q^{\prime\prime} \subset
    Q \\ \ell(Q')=2^{-n}\ell(Q) \\ \ell(Q^{\prime\prime})=2^{-m}\ell(Q)}} (f, h_{Q'})h_{Q^{\prime\prime}}
$$
and $h_{Q'}$ and $h_{Q^{\prime\prime}}$ are generalized Haar
functions associated to the cubes $Q'$ and $Q^{\prime\prime}$
respectively.  We say that $S$ is a Haar shift of complexity
$(m,n)$ if it has a Haar shift kernel of complexity $(m,n)$, and it is bounded on $L^2(dx)$.
\end{defin}

\medskip

By the decomposition theorem of Hyt\"onen~\cite{H,H2}, to prove
Theorems~\ref{thm:conjecture-thm} and~\ref{thm:conjecture-weak-thm} it will suffice to prove that they
hold for Haar shift operators of complexity $(m,n)$ with a constant that grows polynomially
in $\tau=\max(m,n)+1$.   More precisely we will prove the following.

\begin{theorem} \label{thm:dyadic-result} Given $p$, $1<p<\infty$,
  suppose $A$ and $B$ are log-bumps of the form \eqref{eqn:left-bump},
  \eqref{eqn:right-bump}, and  the pair of weights $(u,\sigma)$ satisfies
  \eqref{eqn:alt-separated-bumps} and
  \eqref{eqn:alt-separated-bumps-weak}.  Given any dyadic shift $S$ of
  complexity $(m,n)$, $\tau=\max(m,n)+1$, $\|S(f\sigma)\|_{L^p(u)}\leq
  C\tau^3\|f\|_{L^p(\sigma)}$, where $C$ depends only on the dimension
  $d$ and the suprema in \eqref{eqn:alt-separated-bumps} and
  \eqref{eqn:alt-separated-bumps-weak}.
\end{theorem}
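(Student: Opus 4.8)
The plan is to prove Theorem~\ref{thm:dyadic-result} by reducing the norm inequality for the Haar shift $S$ to a testing/bilinear estimate against dyadic sums, following the ``Sawyer-type'' strategy used in the proof of the $A_2$ conjecture but with the two-weight $A_p$ constant replaced by the logarithmic bump quantities. By duality it suffices to estimate the bilinear form $\langle S(f\sigma), g u\rangle$ for $f\in L^p(\sigma)$, $g\in L^{p'}(u)$, and by the structure of a complexity $(m,n)$ shift we may write
\begin{equation}
\label{eqn:plan-bilinear}
\langle S(f\sigma), g u\rangle = \sum_Q \frac{1}{|Q|}\sum_{Q',Q''} (f\sigma, h_{Q'})(gu, h_{Q''}),
\end{equation}
where $Q', Q''$ range over the descendants of $Q$ at the two fixed scales. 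The first step is the standard one: using $|c_{Q'}|\le 1$ and the support conditions, bound each inner product $|(f\sigma,h_{Q'})|$ by $|Q'|\,\langle |f|\rangle_{Q',\sigma}^{\mathrm{weighted}}$ type quantities (more precisely by averages of $f$ against $\sigma$ over a cube of size comparable to $2^{-\tau}\ell(Q)$), and likewise for $g$ and $u$. Summing over the $2^{d\tau}$-many descendant pairs for each $Q$ produces a factor that is at worst polynomial in $\tau$ (this is one source of the $\tau^3$), and collapses \eqref{eqn:plan-bilinear} into a sum over a single family of cubes of terms $\langle u\rangle_Q \langle \sigma\rangle_Q |Q| \cdot (\text{averages of } f, g)$.

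The second, and central, step is to run a stopping-time / Corona decomposition on this collapsed dyadic sum. One builds principal cubes (stopping cubes) where the averages $\langle |f|\rangle_{Q,\sigma}$ or $\langle |g|\rangle_{Q,u}$ roughly double, and on each Corona the relevant average is essentially constant. The sum then becomes controlled by
\begin{equation}
\label{eqn:plan-carleson}
\sum_{P \text{ principal}} \langle |f|\rangle_{P,\sigma}\,\langle |g|\rangle_{P,u}\,\Big(\sum_{Q \in \mathrm{Corona}(P)} \langle u\rangle_Q \langle \sigma\rangle_Q |Q|\Big),
\end{equation}
and the inner sum must be shown to satisfy a Carleson packing estimate with respect to one of the weights. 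This is exactly where the full $A_p$ condition \eqref{eqn:alt-twowt-Ap} is too weak and where the log-bump hypotheses \eqref{eqn:alt-separated-bumps}, \eqref{eqn:alt-separated-bumps-weak} enter: invoking P\'erez's two-weight maximal function theorem (Theorem~\ref{thm:perez-twowt-max}) — which is available precisely because $\bar B\in B_p$ and $\bar A\in B_{p'}$ for log bumps — one shows $\sum_{Q\in\mathrm{Corona}(P)}\langle u\rangle_Q\langle\sigma\rangle_Q|Q| \lesssim \sigma(P)$ (and symmetrically $\lesssim u(P)$), i.e.\ the weighted Carleson embedding holds. After that, \eqref{eqn:plan-carleson} is estimated by splitting the geometric-mean term, applying H\"older with exponents $p, p'$, and recognizing the principal-cube sums as (weighted) dyadic maximal function norms $\|M_\sigma f\|_{L^p(\sigma)}\lesssim \|f\|_{L^p(\sigma)}$ and $\|M_u g\|_{L^{p'}(u)}\lesssim\|g\|_{L^{p'}(u)}$.

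The hard part, and the step I expect to be the main obstacle, is the Carleson packing estimate for the inner sum in \eqref{eqn:plan-carleson}: proving $\sum_{Q\subset P,\ Q\in\mathrm{Corona}(P)}\langle u\rangle_Q\langle\sigma\rangle_Q|Q|\lesssim \min(\sigma(P),u(P))$. With only the $A_p$ condition this estimate is false in general (that is the content of the Reguera--Scurry and Muckenhoupt--Wheeden counterexamples alluded to in the introduction), so the argument must genuinely exploit the log bump. The mechanism is that the log bump gives a summable (in scale) gain: when $\|\sigma^{1/p'}\|_{B,Q}$ replaces $\langle\sigma\rangle_Q^{1/p'}$ one picks up a $\log(\ell(Q_0)/\ell(Q))^{-(1+\delta')}$-type decay along chains of nested cubes, and these logarithmic factors are exactly summable when $\delta>0$ (this is also why $\delta=0$ fails, matching the sharpness remark). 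Concretely, one either bounds the Corona sum by $\int_P M_\sigma(\chi_E)\, d\mu$ for a suitable set $E$ and weight $\mu$ and applies the weighted maximal inequality, or one uses the $A_\infty$-type self-improvement of the bump condition together with a stopping-time estimate on the weighted measure. Controlling the interaction between the two independent stopping times (one for $f,\sigma$ and one for $g,u$) while keeping the $\tau$-dependence polynomial — the shift couples cubes at scales differing by $\tau$, so the Corona structures must be ``coarsened'' by $\tau$ generations, costing additional powers of $\tau$ — is the delicate bookkeeping that produces the final $\tau^3$ and is where most of the care is required. Everything else (the reduction via Hyt\"onen's representation theorem to shifts, stated in the excerpt; the passage from \eqref{eqn:separated-bumps} to \eqref{eqn:alt-separated-bumps}; the final H\"older step) is routine.
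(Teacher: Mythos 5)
Your opening reduction contains a fatal loss of cancellation, and it propagates into the key packing estimate, which is false as stated. If you bound $|(f\sigma,h_{Q'})|\le\int_{Q'}|f|\sigma$ and $|(gu,h_{Q''})|\le\int_{Q''}|g|u$ and sum over all descendant pairs, the bilinear form collapses to $\sum_Q \frac{1}{|Q|}\bigl(\int_Q|f|\sigma\bigr)\bigl(\int_Q|g|u\bigr)=\sum_Q\langle u\rangle_Q\langle\sigma\rangle_Q|Q|\,\langle|f|\rangle_{Q,\sigma}\langle|g|\rangle_{Q,u}$, where $Q$ runs over \emph{all} dyadic cubes. This already diverges for $u=\sigma=1$ and $f=g=\chi_{Q_0}$, since then it equals $\sum_{Q\subseteq Q_0}|Q|=\infty$. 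For the same reason your central claim, $\sum_{Q\in\mathrm{Corona}(P)}\langle u\rangle_Q\langle\sigma\rangle_Q|Q|\lesssim\min(\sigma(P),u(P))$, cannot hold: for constant $f$ and $g$ there are no stopping children and the corona of $P$ contains every cube below $P$. No bump condition can rescue a sum over all cubes; one needs the family of cubes itself to be sparse, i.e.\ to carry a Carleson/packing property, and that property can only come from the cancellation in $S$. The paper obtains it by first applying Lerner's local mean oscillation formula, which dominates $|S(f\sigma)-m_{S(f\sigma)}|$ by $\tau$ positive operators $S^i_{\mathcal L}$ built on a \emph{sparse} family $\mathcal L$ (pairwise disjoint sets $E_j^k\subset Q_j^k$ with $|E_j^k|\ge\frac12|Q_j^k|$), plus a maximal-function term; it then invokes the Sawyer-type testing theorem for positive shifts and verifies the testing conditions using the exponential-decay distributional inequality of Lacey--Petermichl--Reguera (Theorem~\ref{di}), which again uses the unweighted $L^2$ boundedness of the shift. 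The sequence $\mu_Q$ in the resulting Carleson embedding is supported on the principal cubes of the sparse family — that is what makes it a Carleson sequence.

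You have also misidentified where and how the log-bump hypothesis enters. It is not a $\log(\ell(Q_0)/\ell(Q))^{-(1+\delta')}$ gain along chains of scales, and Theorem~\ref{thm:perez-twowt-max} alone does not give the packing estimate (it only handles the maximal-function terms). The genuine difficulty is that after the corona decomposition one must sum over the levels $a$ of the local two-weight characteristic $\langle u\rangle_Q^{1/p}\langle\sigma\rangle_Q^{1/p'}\approx 2^a$, and in the two-weight setting $a$ runs down to $-\infty$ because the local $A_p$ characteristic can be arbitrarily small. The paper produces the needed geometric decay $2^{a\gamma p}$ via the interpolation inequality $\|\sigma^{1/p'}\|_{B_0,Q}\le C\|\sigma^{1/p'}\|_{B,Q}^{1-\gamma}\|\sigma^{1/p'}\|_{L^{p'}(Q,dx/|Q|)}^{\gamma}$ (Lemma~\ref{interp2}), where $B_0$ is a slightly weaker log bump with $\bar B_0$ still in $B_p$; this is the one step where the specific logarithmic form of $B$ (rather than just $\bar B\in B_p$) is used. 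Your proposal is missing both this summation problem and its resolution.
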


\begin{theorem} \label{thm:dyadic-result-weak}
Given $p$, $1<p<\infty$, suppose    $A$ is a log-bump of the form
\eqref{eqn:left-bump},  and  the pair of weights $(u,\sigma)$ satisfies 
\eqref{eqn:alt-separated-bumps-weak}.  Given any dyadic shift $S$ of complexity $(m,n)$,
$\|S(f\sigma)\|_{L^{p,\infty}(u)}\leq C\tau^3\|f\|_{L^p(\sigma)}$, where $C$ depends only on
the dimension $d$ and the supremum in \eqref{eqn:alt-separated-bumps-weak}.
\end{theorem}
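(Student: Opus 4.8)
The plan is to prove Theorem~\ref{thm:dyadic-result-weak} by a Calder\'on--Zygmund style stopping-time decomposition adapted to the weight $\sigma$, reducing the weak-type bound for the dyadic shift $S$ to a positive dyadic model operator and then using the testing against $\sigma$-averages that the one-sided bump \eqref{eqn:alt-separated-bumps-weak} controls. First I would fix a level $\lambda>0$ and a finitely supported $f\ge 0$, normalized so that $\|f\|_{L^p(\sigma)}=1$, and form the Calder\'on--Zygmund cubes $\{Q_j\}$ for $f$ with respect to the measure $\sigma\,dx$ at height $\lambda^p/C$; these are the maximal dyadic cubes with $\av{f}{Q_j}^\sigma$ (the $\sigma$-average) exceeding the chosen threshold. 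On the complement of $\bigcup_j Q_j$ one has $f\le$ const, and the good part is handled by the $L^2$ (more precisely $L^p$, via the dyadic square function / sparse domination for shifts of complexity $\tau$) boundedness together with the two-weight $A_p$ condition \eqref{eqn:alt-twowt-Ap} that \eqref{eqn:alt-separated-bumps-weak} implies; the factor $\tau^3$ enters here exactly as in the proof of Theorem~\ref{thm:dyadic-result}, since the shift couples cubes $Q',Q''$ up to $\tau$ generations apart and summing over the intermediate scales costs a power of $\tau$.

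The heart of the argument is the bad part $b=\sum_j b_j$ where $b_j=(f-\av{f}{Q_j}^\sigma)\chi_{Q_j}$ (or, more conveniently for a shift, the corresponding local pieces of the Haar expansion of $f$ supported on $Q_j$). The key step is to show
\begin{equation}
\label{eqn:bad-weak}
u\big(\{x\notin \textstyle\bigcup_j 3Q_j : |S(b\sigma)(x)|>\lambda\}\big)\le \frac{C\tau^3}{\lambda^p}\|f\|_{L^p(\sigma)}^p.
\end{equation}
Because $S$ has a Haar-shift kernel of complexity $(m,n)$ with $\tau=\max(m,n)+1$, for $x$ outside the dilated cubes the action of $S$ on a piece $b_j\sigma$ supported in $Q_j$ is controlled by $\av{|b_j|\sigma}{\widehat Q_j}$ where $\widehat Q_j$ is the dyadic ancestor of $Q_j$ at most $\tau$ generations up; summing over $j$ and using that the $Q_j$ are disjoint while their $\tau$-fold ancestors have bounded overlap (at most $2^{d\tau}$, another source of a $\tau$-type factor), the left side of \eqref{eqn:bad-weak} is bounded by $\lambda^{-1}\sum_j \av{|b_j|\sigma}{\widehat Q_j}\,u(\widehat Q_j)/|\widehat Q_j|\cdot|Q_j|$ up to constants. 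Now I would estimate $\av{|b_j|\sigma}{Q_j}\le C\av{f\sigma}{Q_j}=C\av{f}{Q_j}^\sigma\av{\sigma}{Q_j}\le C\lambda^p\lambda^{1-p}\av{\sigma}{Q_j}^{1-1/p}$-type bounds coming from the stopping condition, and crucially bound $\av{u^{1/p}}{\,}$ by the Orlicz average using H\"older's inequality for Luxemburg norms: $\av{u}{\widehat Q_j}^{1/p}\le\|u^{1/p}\|_{A,\widehat Q_j}$. This is exactly the place where the log-bump structure \eqref{eqn:left-bump} is used and not merely $\bar A\in B_{p'}$ --- the quantity $\sum_j |Q_j|\,\|u^{1/p}\|_{A,\widehat Q_j}^p\,(\cdots)$ must be summed, and to get a bound proportional to $\|f\|_{L^p(\sigma)}^p$ one invokes the $L^p$-boundedness of the Orlicz maximal operator $M_{\bar B^{p-1}}$ or, equivalently, Theorem~\ref{thm:perez-orlicz-max} applied to the conjugate bump, which converts the sum of bumped $u$-averages against the stopping cubes into a Carleson-type sum that telescopes against $\int f^p\,\sigma$.

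To organize the summation cleanly I would run a second, $u$-stopping-time (a Carleson embedding / principal-cubes argument in the spirit of the $A_2$-conjecture proofs cited in the excerpt) on the collection $\{\widehat Q_j\}$: choose principal cubes where $\av{u}{\cdot}$ roughly doubles, so that $\sum_j |Q_j| \av{u}{\widehat Q_j}$ over $\widehat Q_j$ in a single tree is comparable to $|P|\av{u}{P}=u(P)$ for the principal cube $P$, and then $\sum_P u(P)\lesssim \sum_P\sigma(P)\lambda^{-p}(\text{stopping value})\lesssim\lambda^{-p}\|f\|_{L^p(\sigma)}^p$ using the one-sided bump \eqref{eqn:alt-separated-bumps-weak} to trade each $u(P)^{1/p}$ for $\|\sigma^{1/p'}\|$-type factors; the doubling of $\bar A$ and the $B_{p'}$ integrability are what make the geometric series over the tree converge, and the loss of $\delta=0$ (sharpness noted in the Remark) corresponds precisely to the divergence of $\int_c^\infty \bar A(t)t^{-p'}\,dt/t$. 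The main obstacle, and the only genuinely delicate point, is precisely this summation of the bumped $u$-averages over the stopping cubes: controlling $\sum_j |Q_j|\,\|u^{1/p}\|_{A,\widehat Q_j}^p$ by $C\,u\big(\bigcup Q_j\big)$ (a "bumped Carleson" estimate) requires the full strength of the log-bump and the boundedness of the associated Orlicz maximal operator, and it is exactly here --- as flagged in the introduction --- that one cannot get away with a general $\bar A\in B_{p'}$ bump and must restrict to log bumps (and, in Theorem~\ref{thmllw}, to loglog bumps with $\delta$ large enough that the analogous iterated-logarithm series still converges). Everything else --- the good/bad splitting, the complexity bookkeeping giving $\tau^3$, and the final layer-cake integration $u(\{|S(f\sigma)|>\lambda\})\cdot\lambda^p$ summed over dyadic $\lambda$ --- is routine once \eqref{eqn:bad-weak} and its good-part analogue are in hand.
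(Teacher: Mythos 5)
Your outline replaces the paper's argument with a Calder\'on--Zygmund good/bad decomposition of $f$ with respect to $\sigma\,dx$, and at the two decisive points it asserts estimates that are not available under the hypotheses. First, you dispose of the good part using ``the $L^p$ boundedness (via sparse domination) together with the two-weight $A_p$ condition \eqref{eqn:alt-twowt-Ap}.'' But the joint $A_p$ condition does \emph{not} imply $\|S(g\sigma)\|_{L^p(u)}\lesssim\|g\|_{L^p(\sigma)}$ for Haar shifts or even for positive sparse operators --- if it did, the strong-type Theorem~\ref{thm:dyadic-result} would be trivial and no bump would be needed; only the maximal function's weak type follows from \eqref{eqn:alt-twowt-Ap}. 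Second, your stopping cubes are chosen by the condition $\av{f}{Q_j}^{\sigma}>\lambda$ (a $\sigma$-average), and the exceptional set $u\bigl(\bigcup_j \widehat Q_j\bigr)$ must then be bounded by $\lambda^{-p}\|f\|_{L^p(\sigma)}^p$; testing with $f=\chi_Q$ shows this forces $u(Q)\lesssim\sigma(Q)$, which is not implied by \eqref{eqn:alt-separated-bumps-weak}. (If one stops on Lebesgue averages of $f\sigma$ instead, $A_p$ does control the exceptional set, but then the rest of your bad-part bookkeeping changes.) Third, the ``bumped Carleson'' inequality you flag as the heart of the matter, $\sum_j|Q_j|\,\|u^{1/p}\|_{A,\widehat Q_j}^p\lesssim u\bigl(\bigcup_j Q_j\bigr)$, is false as stated: since $\|u^{1/p}\|_{A,Q}^p\geq\av{u}{Q}$, the left-hand side behaves like an integral of $Mu$ over the union, not like its $u$-measure, and no correct substitute is indicated. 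So the proposal does not close; it is a sketch of a Cruz-Uribe--P\'erez-style CZ argument whose crucial steps are left unproved or are wrong in the stated form.

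The paper's route is quite different and avoids all of this. One reduces, via Lerner's local mean oscillation formula, to positive shifts $S_{\mathcal L}$ of complexity at most $(0,\tau-1)$; the maximal-function term is weak-type bounded using only \eqref{eqn:alt-twowt-Ap}. Then one invokes the weak-type testing theorem of Hyt\"onen \emph{et al.} (Theorem~\ref{thm:testing-weak}), which bounds $\|S(\cdot\,\sigma)\|_{L^p(\sigma)\to L^{p,\infty}(u)}$ by the weak-type maximal bound plus the \emph{single dual} testing constant $\sup_Q\|\chi_Q S^*(\chi_Q u)\|_{L^p(\sigma)}/u(Q)^{1/p}$ --- this is exactly why only the one-sided bump \eqref{eqn:alt-separated-bumps-weak} is needed. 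That testing condition is verified as in Section~\ref{section:proof}: the corona decomposition into the families $\mathcal P^a$, the exponential-decay distributional inequality of Theorem~\ref{di}, the Carleson embedding lemmas, and --- this is where the log-bump structure of $A$ is genuinely used, in contrast to the place you propose --- the interpolation Lemma~\ref{interp2}, which replaces $A$ by a slightly smaller bump $A_0$ and yields the geometric factor $2^{a\gamma p}$ needed to sum over $a\in(-\infty,b_0]$, since in the two-weight setting the $A_p$ characteristic of the pair can be arbitrarily small. Your sketch contains no analogue of the testing theorem, of the decay-in-$a$ mechanism, or of the corona construction, and these are the essential ingredients of the proof.
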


\section{Proof of Theorem~\ref{thm:dyadic-result}}
\label{section:proof}

To prove the strong-type inequality we follow the argument used by Hyt\"onen and Lacey~\cite{HyLa} in
the one-weight case, which in turn refines the proof
given in~\cite{CU-Ma-Pe}.   Fix a function $f$ that is bounded and has
compact support.  For each $N>0$, let $Q_N = [-2^N,2^N]^d$.   By
Fatou's lemma,
\[ \|S(f\sigma)\|_{L^p(u)} \leq \liminf_{N\rightarrow\infty}
\left(\int_{Q_N} |S(f\sigma)(x)-
  m_{S(f\sigma)}|^p u(x)\,dx\right)^{1/p}, \]
where $m_{S(f\sigma)}$ is the median value of
$S(f\sigma)$ on $Q_N$.  Fix $N$.  Using  the remarkable decomposition
theorem of Lerner ~\cite{Le}, they show that there exists a family of dyadic cubes
$\mathcal{L}=\{Q_j^k\}$ and pairwise disjoint sets $\{E_j^k\}$ such that
$E_j^k\subset Q_j^k$, $|E_j^k|\geq \frac{1}{2}|Q_j^k|$, and
\begin{multline} \label{eqn:strong-reduction}
\left(\int_{Q_N} |S(f\sigma)(x)-
  m_{S(f\sigma)}|^p u(x)\,dx\right)^{1/p}  \\\leq
C\tau \|M(f\sigma)\|_{L^p(u)}+ C\,\tau\sum_{i=1}^{\tau} \| \sli_{j,k}
\av{|f|\sigma}{(Q_j^k)^i}\chi_{Q_j^k} \|_{L^p(u)}.
\end{multline}
(Here, given a dyadic cube $Q$, $Q^i$ denotes the $i$-th parent of
$Q$.)  The linear dependence on $\tau$ in \eqref{eqn:strong-reduction}
can be found in \cite{HyLa}: see Lemma~2.4 and the discussion
following it.  Alternatively, we can deduce it from the argument in
\cite{CU-Ma-Pe} if: 1) we combine it with the unweighted weak-type
estimate with the right dependence on $\tau$ in \cite{H, HPTV};  2)
we precede the weak-type estimate of $1_Q S (1_{Q^{\tau}}f)$ by a
careful pointwise estimate of this function on $Q$.  This  lets us
reduce the weak-type estimate of this expression to the weak-type
estimate of $1_Q S (1_{Q}f)$ (with an error term that can be controlled).

\smallskip

By Theorem~\ref{thm:perez-twowt-max}, $\|M(f\sigma)\|_{L^p(u)}\leq
C\|f\|_{L^p(\sigma)}$.   Therefore, it remains to estimate the second
term in~\eqref{eqn:strong-reduction}.  We will show that each term in
the sum is bounded
by $C\tau\|f\|_{L^p(\sigma)}$.  Note that this gives us our estimate
which is cubic in $\tau$.

Again, following~\cite{HyLa}, we show that this reduces to a
two-weight estimate for a positive Haar shift operator.
We reorder the sum as follows:  fix an integer $i\in [1,\tau]$ and  sum over every cube
$Q=(Q_j^k)^i$ and then over all cubes $Q_r^s\in \mathcal{L}$
such that $(Q_r^s)^i = Q$. Then we have that
$$
\sli_{j,k} \av{|f|\sigma}{(Q_j^k)^i}\chi_{Q_j^k}
= \sli_{Q} \av{|f|\sigma}{Q}\sli_{\stackrel{R\in \mathcal{L}}{
    R^{i}=Q}} \chi_R = \sli_{Q} \chi_{Q}^i \av{|f|\sigma}{Q} = S^i_{\mathcal{L}}(|f|\sigma),
$$
where the last sum is taken over all dyadic cubes $Q$ and
$$
\chi_{Q}^i= \sli_{\stackrel{R\in \mathcal{L}}{ R^{i}=Q}} \chi_R.
$$

Clearly, $S_\mathcal{L}$ (hereafter we omit the superscript $i$) is a positive operator.  We claim that it is
in fact a positive Haar shift of complexity at most $(0,\tau-1)$.
From the definition we have that
$$
S_\mathcal{L}f=\sli_{Q\in \mathcal{L}} \frac{1}{|Q|}\chi_Q^i \ili_Q f,
$$
and so in the notation used above we have that
$$
S_Q = \frac{1}{|Q|} \chi_Q^i \ili_Q f,
$$
$Q' = Q$,  $h_{Q'}=\chi_Q$,  the $Q''$ are all the $(i-1)$-children of $Q$,
and $h_{Q''}= \sli_{R\in ch(Q'')} c_{R}\chi_R$, where $c_R=1$ if $R\in
\mathcal{L}$ and $c_R=0$ otherwise.  Thus $S_\mathcal{L}$ has a Haar shift kernel
of complexity $(0,i-1),\, i\le \tau$.  To see that it is bounded on $L^2$, we
use the properties of the cubes $Q_j^k$.  By duality, there exists
$g\in L^2$, $\|g\|_2=1$, such that
\begin{multline*}
\|S_\mathcal{L}f\|_2^2
 = \int_{\mathbb{R}^d}\sum_{j,k}
\av{f}{(Q_j^k)^i}\chi_{Q_j^k}(x)g(x)\,dx \\
 \leq 2 \sum_{j,k}\av{f}{(Q_j^k)^i} \av{g}{Q_j^k}|E_j^k|
 \leq 2 \int_{\mathbb{R}^d} Mf(x)Mg(x)\,dx.
\end{multline*}
The last integral is bounded by $\|f\|_2\|g\|_2$ by H\"older's inequality and
the unweighted $L^2$ inequality for the maximal operator.

\begin{zamech}  \label{rem:dual}
It follows from this argument that  both $S_\mathcal{L}$ and its
adjoint $S_{\mathcal{L}}^*$ are positive Haar shifts with uniform bounds.
\end{zamech}

\medskip

\noindent{\bf Definition}. Given a positive Haar shift operator $S$,  define the associated maximal
singular integral operator  by
$$
S_\sharp (x):=\sup_{0<\epsilon\le v<\infty} S_{\epsilon, v}(x) =
\sup_{0<\epsilon\le v<\infty}
\sum_{Q\in \mathcal{D},\, \epsilon \le \ell(Q)\le v} S_Q f(x)\,.
$$
To prove that $\|S_\mathcal{L}(f\sigma)\|_{L^p(u)}\leq C\tau
\|f\|_{L^p(\sigma)}$, we use the following result that is essentially
due to Sawyer and which can be found in Hyt\"onen and Lacey~\cite{HyLa} and
Hyt\"onen, {\em et al.}~\cite{HLM+}.  The precise statement
below is gotten by combining  Theorem 4.7 of \cite{HLM+} with Corollary 3.2 and
Lemma 3.3 of \cite{HyLa}.

\begin{theorem} \label{thm:testing}
Let $S$ be a positive Haar shift of complexity $(m,n)$. Then the
associated maximal singular integral $S_{\sharp}$ satisfies
\begin{multline} \label{eqn:testcond1}
\|S_{\sharp}(\cdot \sigma)\|_{L^p(\sigma)\to L^p(u)}  \\
\leqslant \tau \|M(\cdot \sigma)\|_{L^p(\sigma)\to L^p(u)} +
\sup_Q \frac{\|\chi_Q S(\chi_Q
  \sigma)\|_{L^p(u)}}{\sigma(Q)^{\frac{1}{p}}} +
\sup_Q \frac{\|\chi_Q S^*(\chi_Q u)\|_{L^p(\sigma)}}{u(Q)^{\frac{1}{p}}}.
\end{multline}
\end{theorem}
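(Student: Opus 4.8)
The plan is to reduce the maximal singular integral $S_\sharp$ to its "linearized" truncations $S_{\epsilon,v}$ and to a genuine testing estimate, following the Sawyer-type scheme as packaged in \cite{HyLa, HLM+}. First, I would fix $f\geq 0$ and observe that for each $x$ the supremum defining $S_\sharp$ is achieved (or nearly achieved) by a measurable choice of truncation parameters $\epsilon(x)\le v(x)$, so that it suffices to bound a single truncated operator $S_{\epsilon(\cdot),v(\cdot)}$ uniformly in the choice. The key point is that because $S$ is a \emph{positive} Haar shift, the truncation $S_{\epsilon,v}f$ is itself a positive sum of the pieces $S_Q f$ over a subcollection of dyadic cubes, and the maximal truncation is therefore controlled by a stopping-time/Carleson argument exactly as in the scalar Sawyer theorem for positive dyadic operators.

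Second, I would invoke the parallel corridor decomposition from \cite{HyLa} (Corollary~3.2, Lemma~3.3): a positive Haar shift of complexity $(m,n)$ splits, with a loss at most linear in $\tau=\max(m,n)+1$, into $O(\tau)$ positive \emph{dyadic} operators of complexity $(0,0)$ (i.e.\ sums $\sum_Q \langle f\rangle_Q\,\chi_Q$ over sparse/Carleson families). For each such model operator $T$, the two-weight testing theorem of Sawyer (in the form of \cite[Theorem~4.7]{HLM+}) gives
\[
\|T(\cdot\sigma)\|_{L^p(\sigma)\to L^p(u)} \lesssim \sup_Q\frac{\|\chi_Q T(\chi_Q\sigma)\|_{L^p(u)}}{\sigma(Q)^{1/p}} + \sup_Q\frac{\|\chi_Q T^*(\chi_Q u)\|_{L^p(\sigma)}}{u(Q)^{1/p}},
\]
and one also needs the maximal-truncation version, which is where the extra $\tau\|M(\cdot\sigma)\|$ term enters: the difference between $S_\sharp$ and the sum of the (already bounded) full operators $S^i_{\mathcal L}$ is a "short-range" piece supported near the boundary parameter scales, which is pointwise dominated by a constant times $M(f\sigma)$, with the constant absorbed into the $\tau$ in front. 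Summing the $O(\tau)$ pieces and noting that the testing constants for each model operator are dominated by the testing constants for $S$ itself (since the model operators are "sub-operators" of $S$ in the positivity order) yields the stated inequality with the factor $\tau$ on the maximal term.

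Third, I would assemble these: $\|S_\sharp(\cdot\sigma)\|_{L^p(\sigma)\to L^p(u)} \le \tau\,\|M(\cdot\sigma)\|_{L^p(\sigma)\to L^p(u)} + \sum_{i} (\text{testing terms for } S^i_{\mathcal L})$, then use Remark~\ref{rem:dual} (both $S_\mathcal{L}$ and $S_\mathcal{L}^*$ are positive Haar shifts with uniform bounds) to replace the per-$i$ testing terms by the single testing suprema for $S$ and $S^*$ written in the statement. The translation between "testing over dyadic cubes in the model operator's grid" and "testing over all cubes $Q$" is standard and costs only a dimensional constant.

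The main obstacle I expect is \emph{not} the testing theorem itself (that is a black box from \cite{HLM+}) but tracking the linear-in-$\tau$ dependence through the reduction from $S_\sharp$ to the complexity-$(0,0)$ models while simultaneously controlling the maximal truncation: one must check that the parallel-corridor splitting, the passage to truncations, and the domination of boundary terms by $M(f\sigma)$ each cost at most a constant (independent of $\tau$) or a single factor of $\tau$, not a power of $\tau$. This is precisely the content of Lemma~2.4 and the surrounding discussion in \cite{HyLa}, so in practice the proof is a careful bookkeeping exercise citing \cite{HyLa, HLM+} rather than a new argument; for the write-up I would state the reduction, cite the two sources for the two halves (maximal-truncation control and the testing characterization of the $(0,0)$ models), and remark that the $\tau$ in front is the only place the complexity enters at this stage.
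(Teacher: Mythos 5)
You should first be aware that the paper itself offers no proof of Theorem~\ref{thm:testing}: it is imported as a black box, ``gotten by combining Theorem~4.7 of \cite{HLM+} with Corollary~3.2 and Lemma~3.3 of \cite{HyLa}.'' So your bottom line --- assemble the statement by citing those two sources, one for the control of the maximal truncation and one for the Sawyer-type testing characterization of positive shifts --- coincides with what the paper actually does, and a write-up that is literally that citation would match the paper.

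The difficulty is with the mechanism you interpose between the citations; taken as an argument it has a genuine gap. First, the claimed ``parallel corridor'' splitting of a general positive Haar shift of complexity $(m,n)$ into $O(\tau)$ positive operators of complexity $(0,0)$ over sparse/Carleson families is not an available off-the-shelf step, and it is not what the cited results do. Positivity only gives the trivial pointwise bound $S_Qf\le \langle |f|\rangle_Q\chi_Q$, hence $Sf\le\sum_{Q\in\mathcal{D}}\langle |f|\rangle_Q\chi_Q$ over \emph{all} dyadic cubes, which is useless; producing a Carleson family is precisely the stopping-time/corona construction together with the exponential-decay distributional estimate (cf.\ Theorem~\ref{di}), i.e.\ the hard content of the cited proofs and of Section~\ref{section:proof} of this paper when the testing constant is estimated --- so your transfer of the models' testing constants to those of $S$ ``in the positivity order'' presupposes exactly the step that is missing. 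Second, you conflate the two layers of the argument: the operators $S^i_{\mathcal{L}}$ arise from Lerner's formula in the \emph{application} of Theorem~\ref{thm:testing}, after the theorem is already in hand for an arbitrary positive shift; in the proof of the theorem itself there are no $S^i_{\mathcal{L}}$, their boundedness is the conclusion rather than an input, and the term $\tau\,\|M(\cdot\sigma)\|_{L^p(\sigma)\to L^p(u)}$ comes from the corona tops and the truncation scales inside the Sawyer-type argument of \cite{HLM+,HyLa}, not from comparing $S_\sharp$ with ``already bounded'' full operators. If you intend the sketch to be a proof rather than a citation, the sparse $(0,0)$ reduction and the truncation-error domination are the places where it currently fails.
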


\begin{zamech}
  Testing conditions of this kind were first proved by
  E. Sawyer~\cite{S} for positive operators.  Later, this result was
  proved by Nazarov, Treil andVolberg~\cite{NTV,NTV-mrl} for all well localized operators (in
  particular, all Haar shifts) when $p=2$.  Also see
  Theorem~\ref{thm:testingCZM} below.  It is not known if
  Theorem~\ref{thm:testing} is true for all Haar shifts when $p\neq 2$ even if $S_\sharp$ is replaced by $S$.
\end{zamech}

\medskip

We will now use Theorem \ref{thm:testing} to show that if $S$ is any positive Haar
shift operator, then the right-hand side of~\eqref{eqn:testcond1} is
bounded with a constant linear in $\tau$, provided that our bump
conditions are satisfied.    As before, by
Theorem~\ref{thm:perez-twowt-max} the first term is bounded by
$C\tau$.  It remains to estimate the two testing conditions.  We will
estimate the first; the estimate for $S^*$ is gotten in essentially
the same fashion.  (See Remark~\ref{rem:second-bump} below.)

Fix a cube $Q_0$; using the notation from the definition of a Haar shift, we have that
\begin{equation}
\label{outin}
\chi_{Q_0} S(\chi_{Q_0} \sigma) = \sli_{R\subset Q_0} S_R(\sigma) +
\chi_{Q_0 }\sli _{R,\,Q_0\subset R}S_R(\chi_{Q_0} \sigma)\leqslant \sli_{R\subset Q_0}S_R(\sigma) + \chi_{Q_0} \av{\sigma}{Q_0}.
\end{equation}
The second inequality is straightforward:  see, for instance,
\cite{H,HyLa,HLM+,HPTV}.
As we noted above, the pair $(u,\sigma)$ satisfies the two-weight
$A_p$ condition~\eqref{eqn:alt-twowt-Ap}.   Therefore, the $L^p(u)$
norm of the second term is bounded by
\begin{equation*}
 \|\chi_{Q_0}\|_{L^p(u)}\av{\sigma}{Q_0} =
\av{u}{Q_0}^{1/p}\av{\sigma}{Q_0}^{1/p'} \sigma(Q_0)^{1/p} \leq C
\sigma(Q_0)^{1/p}\,.
\end{equation*}

To estimate the $L^p(u)$ norm of the first term, we form the following decomposition (see~\cite{HyLa}):
\begin{align*}
&\mathcal{K}=\mathcal{K}_i = \{Q\subset Q_0\colon \ell(I)=2^{i+\tau n}\},\, n\in \mathbb{Z}_+;\\
&\mathcal{K}_a = \{Q\in \mathcal {K}\colon 2^a \leqslant \av{u}{Q}^{\frac{1}{p}}\av{\sigma}{Q}^{\frac{1}{p'}} <2^{a+1}\}; \\
&\mathcal{P}^a_0 = \mbox{all maximal cubes in $\mathcal{K}_a$};\\
&\mathcal{P}^a_n = \left\{\mbox{maximal cubes $P'\subset P\in \mathcal{P}^a_{n-1}$, such that}\; \av{\sigma}{P'}>2\av{\sigma}{P} \right\}; \\
&\mathcal{P}^a = \bigcup_{n\geqslant 0}\mathcal{P}^a_n.
\end{align*}
Hereafter we suppress the index $i$; this will give us a sum with
$\tau+1$ terms.  Given $Q\in \mathcal{K}_a$, let $\Pi(Q)$ denote the minimal principal
cube that contains it, and define
\[ \mathcal{K}_a(P) = \{ Q \in \mathcal{K}_a : \Pi(Q)=P\}.  \]

We will estimate the $L^p(u)$ norm of the first sum on the right-hand
side of \eqref{outin} using the exponential decay distributional
inequality originated in \cite{LPR}.  (This inequality was subsequently improved in the sense of establishing the right dependence on the complexity of the shift in
\cite{HPTV, HyLa}.)  Below, $S$ is any positive generalized Haar shift that is
bounded on unweighted $L^2$.  In particular, we will take
$S$ to be one of the positive Haar shifts $S_{\mathcal{L}}$ from
above.

\begin{theorem}  \label{di}
There exists a constant $c$, depending only on the dimension and the
unweighted $L^2$ norm of the shift, such that
for any $P\in \mathcal{P}^a$,
$$
u\left(x\in P:\,|S_{\mathcal{K}_a(P)}(\sigma)|>t\frac{\sigma(P)}{|P|}\right)\lesssim e^{-c\,t}\,u(P).
$$
\end{theorem}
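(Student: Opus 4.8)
The plan is to prove Theorem~\ref{di} by a stopping-time argument that exploits the \emph{sparse} structure encoded in the principal cubes $\mathcal{P}^a$ and the fact that on $\mathcal{K}_a(P)$ the averages $\av{\sigma}{Q}$ are essentially constant. First I would recall the key consequence of the construction of $\mathcal{P}^a$: if $Q\in\mathcal{K}_a(P)$ then $Q$ is contained in $P$ but in no principal cube strictly between $Q$ and $P$, so by maximality $\av{\sigma}{Q}\le 2\av{\sigma}{P}$, while $\av{u}{Q}^{1/p}\av{\sigma}{Q}^{1/p'}\sim 2^a$ on all of $\mathcal{K}_a$; combining these bounds $\av{\sigma}{Q}$ from below as well (the $A_p$ condition \eqref{eqn:alt-twowt-Ap} gives $\av{u}{Q}\av{\sigma}{Q}^{p-1}\lesssim 1$, hence $\av{\sigma}{Q}\gtrsim 2^{ap'}$), I get $\av{\sigma}{Q}\approx \av{\sigma}{P}$ with constants depending only on the $A_p$ bound. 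Consequently, on $P$,
\[
|S_{\mathcal{K}_a(P)}(\sigma)|\approx \frac{\sigma(P)}{|P|}\,\big|S_{\mathcal{K}_a(P)}(\chi_P)\big|,
\]
up to a fixed multiplicative constant, so the claimed inequality reduces to the \emph{unweighted} exponential distributional estimate
\[
u\Big(x\in P:\ \big|S_{\mathcal{K}_a(P)}(\chi_P)(x)\big|>t\Big)\lesssim e^{-ct}\,u(P).
\]

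The second step is to establish this last inequality, which is exactly the good-$\lambda$/John--Nirenberg type bound for positive Haar shifts originating in Lacey--Petermichl--Reguera \cite{LPR} and refined with the sharp complexity dependence in \cite{HPTV,HyLa}. I would argue that $S_{\mathcal{K}_a(P)}(\chi_P)$ has a BMO-type bound on $P$: because the relevant cubes all have side length in a single residue class mod $2^\tau$ and are organized by the principal-cube stopping time, the partial sums $\sum_{Q\in\mathcal{K}_a(P),\,Q\supset R} S_Q(\chi_P)$ telescope in a controlled way, and one obtains that for any subcube $R\subset P$, $\frac{1}{|R|}\int_R |S_{\mathcal{K}_a(P)}(\chi_P)-c_R|\lesssim 1$ with constant independent of $t$. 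The exponential decay in Lebesgue measure then follows from John--Nirenberg. Finally, to pass from Lebesgue measure to the measure $u$, I would invoke that $u$ restricted to $P$ satisfies a suitable $A_\infty$-type property \emph{relative to the cubes in question}: the $A_p$ bump (or even just \eqref{eqn:alt-twowt-Ap}) together with the near-constancy of $\av{\sigma}{Q}$ forces $\av{u}{Q}$ to be comparable to $\av{u}{P}$ for $Q\in\mathcal{K}_a(P)$, so a level set small in $|\cdot|$ is small in $u$ with an exponential rate (possibly after adjusting the constant $c$); this is the mechanism already used in \cite{LPR, HPTV, HyLa} and I would cite it rather than reprove it.

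I expect the main obstacle to be the passage from the Lebesgue-measure exponential estimate to the $u$-measure estimate while keeping a clean exponential rate, since in general a level set of small Lebesgue measure need only have small $u$-measure with a \emph{polynomial} loss unless $u\in A_\infty$ with respect to the dyadic grid. The resolution is that we are not estimating an arbitrary function but $S_{\mathcal{K}_a(P)}(\sigma)$ whose level sets are unions of cubes from $\mathcal{K}_a(P)$, on every one of which $\av{u}{\cdot}$ is trapped between fixed multiples of $\av{u}{P}$ by the combination of the $A_p$ condition and the definition of the principal cubes; so on this restricted class of sets $|E|/|P|$ and $u(E)/u(P)$ are comparable up to a constant, and the exponential bound transfers verbatim (with $c$ replaced by $c/C$). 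The remaining routine points — verifying the $L^2$-normalization of $S_{\mathcal{K}_a(P)}$ needed to apply the cited distributional inequality, and tracking that all constants depend only on $d$ and $\|S\|_{L^2\to L^2}$ and not on $a$, $P$, or the complexity — I would dispatch quickly, noting that the complexity dependence has already been absorbed into the restriction $\ell(Q)=2^{i+\tau n}$ in the definition of $\mathcal{K}_i$.
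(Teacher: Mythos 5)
The paper does not actually prove Theorem~\ref{di}: it imports the exponential-decay distributional inequality from \cite{LPR,HPTV,HyLa}. Your sketch, however, has a genuine gap at exactly the point you single out as the main obstacle. The stopping construction of $\mathcal{P}^a$ gives only the one-sided bound $\av{\sigma}{Q}\le 2\av{\sigma}{P}$ for $Q\in\mathcal{K}_a(P)$ (principal cubes are created when the $\sigma$-average goes \emph{up}, never when it goes down), and your derivation of the reverse bound is a non sequitur: from $2^a\le \av{u}{Q}^{1/p}\av{\sigma}{Q}^{1/p'}\le K$ one can conclude nothing about $\av{\sigma}{Q}$ alone, since $\av{u}{Q}$ may be arbitrarily large and $\av{\sigma}{Q}$ arbitrarily small with the product pinned near $2^a$. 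In fact, combining $\av{\sigma}{Q}\le 2\av{\sigma}{P}$ with $Q,P\in\mathcal{K}_a$ yields $\av{u}{Q}\gtrsim \av{u}{P}$ --- a \emph{lower} bound on the $u$-averages over the corona cubes, which is exactly the wrong direction for your final transfer step: to conclude that a level set of small Lebesgue measure (a union of corona cubes) has small $u$-measure, you would need $\av{u}{Q}\lesssim \av{u}{P}$, equivalently $\av{\sigma}{Q}\gtrsim\av{\sigma}{P}$, and neither is supplied by the construction.

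This is not a removable technicality; it is where the two-weight case genuinely differs from the one-weight case. The good-$\lambda$/John--Nirenberg iteration you describe does yield exponential decay of the \emph{Lebesgue} measure of the level sets (via the unweighted weak $(1,1)$ bound for the shift), and in the one-weight arguments of \cite{LPR,HyLa} the passage to the $w$-measure is legitimate because $w\in A_p\subset A_\infty$. Here $u$ satisfies no $A_\infty$ condition, so the decay in the measure $u$ must be produced inside the iteration itself rather than transferred afterwards; that is precisely the content of the cited distributional inequality, and it cannot be recovered from the average-comparability you invoke. (A smaller point: your opening reduction replacing $\sigma$ by $\frac{\sigma(P)}{|P|}\chi_P$ is also only valid as the one-sided pointwise domination $S_Q\sigma\le 2\av{\sigma}{P}\chi_Q$, which fortunately is all that step requires; the two-sided ``$\approx$'' you write relies on the same unavailable lower bound.)
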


\vspace{.2in}

It follows from  Theorem \ref{di} that for some positive constant $c$,
\begin{equation}
\label{main}
\|\sli_{R\subsetneq Q} S_R(\sigma) \|_{L^p(u)}\leqslant
C\tau\sli_a \left(\sli_{P\in \mathcal{P}^a}u(P) \left(\frac{\sigma(P)}{|P|}\right)^{p}\right)^{\frac{1}{p}}.
\end{equation}

\bigskip

We sketch the proof of \eqref{main} following the  beautiful calculations in \cite{HyLa}:
$$
\sli_{R\subsetneq Q} S_R(\sigma)= \sli_{i=0}^\tau\sum_a \sum_{P \in \mathcal{P}^a} S_{\mathcal{K}^a(P) }(\sigma)\,,
$$
and so
$$
\|\sli_{R\subsetneq Q} S_R(\sigma) \|_{L^p(u)}\leqslant (\tau+1)\sli_a  \|\sum_{P\in \mathcal{P}^a} S_{\mathcal{K}^a(P)} (\sigma)\|_{L^p(u)}\,.
$$
Fix $a$.  Using Fubini's theorem we write
\begin{align*}
 & \|\sum_{P\in \mathcal{P}^a} S_{\mathcal{K}^a(P)} (\sigma)\|_{L^p(u)} \\
& \qquad \qquad = \bigg(\int\bigg(\sum_j\sum_{P\in \mathcal{P}^a} \chi_{\{ S_{\mathcal{K}^a(P)}
  (\sigma)\in (j, j+1)\frac{v(P)}{|P|}\}}S_{\mathcal{K}^a(P)}
(\sigma)(x)\bigg)^p\,u(x)\,dx\bigg)^{1/p} \\
& \qquad \qquad \le
 \sum_j (j+1)\bigg(\int\bigg[\sum_{P\in \mathcal{P}^a} \chi_{\{S_{\mathcal{K}^a(P)} (\sigma)\in (j, j+1)\frac{\sigma(P)}{|P|}\}}\frac{\sigma(P)}{|P|}\bigg]^p\, u(x)\,dx\bigg)^{1/p}\,.
\end{align*}
By the choice of the stopping cubes $P\in \mathcal{P}^a$ we have that
\[ \bigg[\sum_{P\in \mathcal{P}^a} \chi_{\{S_{\mathcal{K}^a(P)}
  (\sigma)\in (j,
  j+1)\frac{\sigma(P)}{|P|}\}}\frac{\sigma(P)}{|P|}\bigg]^p \lesssim
\sum_{P\in \mathcal{P}^a} \chi_{\{S_{\mathcal{K}^a(P)} (\sigma)\in (j,
  j+1)\frac{\sigma(P)}{|P|}\}}\bigg(\frac{\sigma(P)}{|P|}\bigg)^p\,.
\]
This follows because the ratios $\frac{\sigma(P)}{|P|}$  in the sum on
the left are super-exponential. This beautiful observation from
\cite{HyLa} lets us write
\begin{multline*}
 \|\sum_{P\in \mathcal{P}^a} S_{\mathcal{K}^a(P)}
 (\sigma)\|_{L^p(u)} \\
\lesssim
\sum_j (j+1)\bigg(\sum_{P\in \mathcal{P}^a}
\bigg(\frac{\sigma(P)}{|P|}\bigg)^p\,
u(S_{\mathcal{K}^a(P)} (\sigma)\in (j, j+1)\frac{\sigma(P)}{|P|})\,\bigg)^{1/p}\,.
\end{multline*}
Then by the  distributional  inequality from Theorem \ref{di}:
  $$
 \|\sum_{P\in \mathcal{P}^a} S_{\mathcal{K}^a(P)}
 (\sigma)\|_{L^p(u)}\lesssim
  \sum_j (j+1)e^{-cj/p}\bigg(\sum_{P\in \mathcal{P}^a} \bigg(\frac{\sigma(P)}{|P|}\bigg)^p\, u(P)\,\bigg)^{1/p}\,.
 $$
This gives us \eqref{main}.

\bigskip

It is at this point in the proof that we can no longer assume that our
pair of weights $(u,\sigma)$ satisfies the general $A_p$ bump condition and we must
instead make the more restrictive assumption that we have log bumps.   Before doing so, however, we want
to show how the proof goes and where the problem arises for general
bumps.    We will then give the modification necessary to make this
argument work for log bumps.

\smallskip

Define the sequence
\[
\mu_Q = \begin{cases} |P|, &Q=P \,, \mbox{for some cube $P\in \mathcal{P}^a$}\\
0, &\mbox{otherwise};\end{cases}
\]
then the inner sum in \eqref{main} becomes
$$
\sli_{Q\subset Q_0} \frac{u(Q)}{|Q|} \left(\frac{\sigma(Q)}{|Q|}\right)^{p}\mu_Q.
$$
But by H\"older's inequality in the scale of Orlicz spaces,
\begin{equation} \label{A}
\frac{\sigma(Q)}{|Q|} =
\av{\sigma^{\frac{1}{p}}\sigma^{\frac{1}{p'}}}{Q}
\leqslant C\|\sigma^{\frac{1}{p'}}\|_{Q, B}
\|\sigma^{\frac{1}{p}}\|_{Q, \bar{B}}
\leqslant \|\sigma^{\frac{1}{p'}}\|_{Q, B} \inf_{x\in Q} M_{\bar{B}}(\sigma^{\frac{1}{p}}\chi_Q).
\end{equation}
Therefore, by~\eqref{eqn:alt-separated-bumps},
\begin{equation}
\label{B}
\sli_{Q\subset Q_0} \frac{u(Q)}{|Q|} \left(\frac{\sigma(Q)}{|Q|}\right)^{p}\mu_Q
\leqslant K^p \sli_{Q\subset Q_0} \mu_Q \inf_{x\in Q} M_{\bar{B}}(\sigma^{\frac{1}{p}}\chi_Q)^p.
\end{equation}

To complete the proof we need two lemmas.  The first can be found in \cite{LPR}.

\begin{lemma}
$\{\mu_Q\}$ is a Carleson sequence.
\end{lemma}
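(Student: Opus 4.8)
\medskip

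The plan is to establish the packing (Carleson) estimate
\[
\sli_{Q\subset R}\mu_Q\leqslant C\,|R|
\]
for every dyadic cube $R$, with $C$ independent of $a$, of the index $i$, and of $Q_0$; by the definition of $\mu_Q$ this is the same as proving
\[
\sli_{\substack{P\in\mathcal{P}^a\\ P\subset R}}|P|\leqslant C\,|R|.
\]
The starting point is the observation that passing from a cube $P\in\mathcal{P}^a$ to the collection of cubes of $\mathcal{P}^a$ that are maximal strictly below $P$ costs at least a factor $2$ in the average of $\sigma$, so the total Lebesgue measure decays geometrically down the stopping tree.

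\medskip

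First I would prove the one-step estimate. Fix $P\in\mathcal{P}^a$ and let $\operatorname{ch}_{\mathcal{P}^a}(P)$ denote the cubes $P'\in\mathcal{P}^a$ that are maximal among those strictly contained in $P$; by construction each satisfies $\av{\sigma}{P'}>2\av{\sigma}{P}$, and, being maximal, they are pairwise disjoint subcubes of $P$, so $\sli_{P'\in\operatorname{ch}_{\mathcal{P}^a}(P)}\sigma(P')\leqslant\sigma(P)$. Writing $\sigma(P')=\av{\sigma}{P'}|P'|>2\av{\sigma}{P}|P'|$ and $\sigma(P)=\av{\sigma}{P}|P|$ and dividing by $\av{\sigma}{P}$ (if $\av{\sigma}{P}=0$ there are no such $P'$ and the bound is trivial) gives
\[
\sli_{P'\in\operatorname{ch}_{\mathcal{P}^a}(P)}|P'|\;<\;\tfrac12\,|P|.
\]
Iterating this bound down the $\mathcal{P}^a$-subtree rooted at any $P_0\in\mathcal{P}^a$ yields $\sli_{P\in\mathcal{P}^a,\ P\subsetneq P_0}|P|\leqslant\sli_{n\geqslant1}2^{-n}|P_0|=|P_0|$, and hence $\sli_{P\in\mathcal{P}^a,\ P\subset P_0}|P|\leqslant 2|P_0|$.

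\medskip

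Finally I would pass to an arbitrary dyadic cube $R$. Among the cubes of $\mathcal{P}^a$ contained in $R$, let $\{P_0^j\}$ be those maximal with respect to inclusion; these are pairwise disjoint subcubes of $R$, so $\sli_j|P_0^j|\leqslant|R|$. Every $P\in\mathcal{P}^a$ with $P\subset R$ lies in the $\mathcal{P}^a$-subtree rooted at exactly one $P_0^j$ (here one uses that the $\mathcal{P}^a$-parent of $P$, when it exists, either lies inside $R$ or strictly contains $R$), so by the previous paragraph
\[
\sli_{\substack{P\in\mathcal{P}^a\\ P\subset R}}|P|=\sli_j\;\sli_{\substack{P\in\mathcal{P}^a\\ P\subset P_0^j}}|P|\leqslant\sli_j 2|P_0^j|\leqslant 2|R|,
\]
which is the Carleson estimate with constant $C=2$. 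Note that nothing is used about the particular threshold $2^a\leqslant\av{u}{Q}^{1/p}\av{\sigma}{Q}^{1/p'}<2^{a+1}$ defining $\mathcal{K}_a$, only the $\sigma$-doubling built into the $\mathcal{P}^a_n$. There is no serious obstacle here; the one mild point requiring care is the bookkeeping in the last display, namely that sorting the cubes of $\mathcal{P}^a\cap\{P\subset R\}$ by their maximal ancestor inside $R$ genuinely partitions them, which is where the maximality in the definitions of $\mathcal{P}^a_0$ and $\mathcal{P}^a_n$ enters. This is a routine stopping-time packing argument.
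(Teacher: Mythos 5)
Your overall strategy is the standard packing argument for principal cubes (the paper does not prove this lemma itself but cites \cite{LPR}), and most of it is fine: the inclusion-maximal members of $\mathcal{P}^a$ strictly inside $P$ are indeed pairwise disjoint, the geometric iteration is correct, the reduction to an arbitrary dyadic $R$ through the maximal cubes $P_0^j$ is, with your inclusion-tree definition of the subtrees, the trivial part, and the uniformity in $a$, $i$, $Q_0$ is correctly observed. The real gap is the phrase ``by construction each satisfies $\av{\sigma}{P'}>2\av{\sigma}{P}$''. Your children $\operatorname{ch}_{\mathcal{P}^a}(P)$ are the cubes of $\mathcal{P}^a$ that are inclusion-maximal strictly inside $P$, whereas the construction of $\mathcal{P}^a_n$ only guarantees $\av{\sigma}{P'}>2\av{\sigma}{\Pi(P')}$ relative to the stopping parent $\Pi(P')\in\mathcal{P}^a_{n-1}$ from which $P'$ was actually selected. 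A priori $\Pi(P')$ could strictly contain $P$; in that case the maximality of $P'$ in the selection inside $\Pi(P')$ forces $\av{\sigma}{P}\le 2\av{\sigma}{\Pi(P')}$, and all one can conclude is $\av{\sigma}{P'}>2\av{\sigma}{\Pi(P')}\ge\av{\sigma}{P}$, i.e.\ a factor $1$ instead of $\tfrac12$ in your one-step estimate, which destroys the geometric decay. So the identification of the inclusion tree of $\mathcal{P}^a$ with the stopping tree is precisely the point that must be argued; it is not the bookkeeping in your last display.

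The gap is fixable, because the bad configuration cannot occur: no cube of $\mathcal{P}^a$ lies strictly between a stopping parent and one of its stopping children. Suppose $P'\subsetneq T\subsetneq Q$ with $T,Q\in\mathcal{P}^a$ and $P'$ a stopping child of $Q$; among all such triples choose one with $\ell(Q)$ maximal, and for that $Q$ one with $\ell(T)$ maximal (side lengths are dyadic and bounded by $\ell(Q_0)$, so the maxima are attained). Since $T\subsetneq Q$, $T$ cannot belong to $\mathcal{P}^a_0$ (cubes of $\mathcal{P}^a_0$ are maximal, hence non-nested with the $\mathcal{P}^a_0$-ancestor of $Q$), so its stopping parent $\Pi(T)$ exists and satisfies $\av{\sigma}{T}>2\av{\sigma}{\Pi(T)}$. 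If $\Pi(T)=Q$, then $P'\subsetneq T$ are two stopping children of $Q$, contradicting the maximality in their selection; if $\Pi(T)\subsetneq Q$, then $(P',\Pi(T),Q)$ is a triple with the same outer cube and strictly larger middle cube; if $\Pi(T)\supsetneq Q$, then $(T,Q,\Pi(T))$ is a triple with strictly larger outer cube. Each case is a contradiction. Consequently, if $P'\in\mathcal{P}^a$ is inclusion-maximal strictly inside $P\in\mathcal{P}^a$, its stopping parent can be neither strictly inside $P$ (your maximality) nor strictly larger than $P$ (the claim just proved), so $\Pi(P')=P$ and the inequality $\av{\sigma}{P'}>2\av{\sigma}{P}$ does hold. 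With this supplement your proof is complete, with Carleson constant $2$ uniform in $a$, $i$ and $Q_0$, and is in substance the argument of \cite{LPR} to which the paper appeals.
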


The second is a folk theorem; a proof can be found in \cite{MP}.

\begin{lemma}
If $\{\mu_Q\}$ is a Carleson sequence, then
$$
\sli_{Q\subset Q_0} \mu_Q \inf_Q \chi_{Q_0}F(x) \lesssim \ili_{Q_0} F(x)dx.
$$
\end{lemma}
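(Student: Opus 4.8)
The plan is to prove the lemma by a layer--cake argument that reduces the weighted sum to the defining property of a Carleson sequence, namely $\sli_{Q\subseteq R}\mu_Q\le A|R|$ for every dyadic cube $R$ (with $A$ the Carleson constant), applied on a dyadic Calder\'on--Zygmund decomposition of the super-level sets of $F$. Throughout, $\inf_Q(\chi_{Q_0}F)=\inf_{x\in Q}F(x)$ for $Q\subseteq Q_0$, and all quantities are nonnegative, so Tonelli's theorem applies freely.

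First I would write $\inf_QF=\ili_0^{\inf_QF}dt$ and interchange sum and integral to get
\[
\sli_{Q\subseteq Q_0}\mu_Q\inf_QF=\ili_0^\infty\Big(\sli_{Q\subseteq Q_0:\ \inf_QF>t}\mu_Q\Big)\,dt.
\]
Next, fix $t>0$ and let $\mathcal M_t$ be the collection of maximal dyadic cubes $P\subseteq Q_0$ with $\inf_PF>t$; this collection is well defined since $Q_0$ itself is a ceiling, and by dyadic nesting its members are pairwise disjoint. Because $Q\mapsto\inf_QF$ is non-increasing under enlargement of $Q$, every dyadic $Q\subseteq Q_0$ with $\inf_QF>t$ lies in exactly one $P\in\mathcal M_t$, and conversely every $Q\subseteq P\in\mathcal M_t$ satisfies $\inf_QF\ge\inf_PF>t$. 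Hence, using the Carleson property on each $P$ and the inclusion $P\subseteq\{x\in Q_0:F(x)>t\}$,
\[
\sli_{Q\subseteq Q_0:\ \inf_QF>t}\mu_Q=\sli_{P\in\mathcal M_t}\ \sli_{Q\subseteq P}\mu_Q\le A\sli_{P\in\mathcal M_t}|P|=A\Big|\bcl_{P\in\mathcal M_t}P\Big|\le A\,\big|\{x\in Q_0:F(x)>t\}\big|.
\]

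Finally I would integrate in $t$ and recognize the right-hand side as a layer--cake formula:
\[
\sli_{Q\subseteq Q_0}\mu_Q\inf_QF\le A\ili_0^\infty\big|\{x\in Q_0:F(x)>t\}\big|\,dt=A\ili_{Q_0}F(x)\,dx,
\]
which is the asserted inequality with implied constant the Carleson constant $A$. I do not expect a serious obstacle; the only point needing a word of care is the well-definedness of the family $\mathcal M_t$ for a possibly non-open super-level set and the claim that every relevant $Q$ sits inside a unique member of it, both of which follow from dyadic nesting together with the monotonicity of $Q\mapsto\inf_QF$. (If one wished to bypass maximal cubes entirely, one could instead run a stopping-time argument directly on the dyadic averages, but the distribution-function proof above is the most economical.)
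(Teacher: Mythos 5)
Your proof is correct, and the layer--cake argument with maximal stopping cubes is exactly the standard proof of this folklore lemma; the paper itself does not prove it but defers to the reference [MP], where essentially this same argument appears. The one point you flag---well-definedness of the maximal family $\mathcal M_t$ and the fact that every $Q$ with $\inf_QF>t$ sits inside a unique member while every $Q$ inside a member again satisfies $\inf_QF>t$---is handled correctly by the monotonicity of $Q\mapsto\inf_QF$ under inclusion, and your argument even yields the sharp implied constant, namely the Carleson constant $A$.
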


Combining these two lemmas with  Theorem~\ref{thm:perez-orlicz-max}
(since $\bar{B}\in B_p$) we
see that
\begin{multline*}
\sli_{Q} \frac{u(Q)}{|Q|} \left(\frac{\sigma(Q)}{|Q|}\right)^{p}\mu_Q
\leqslant K^p \sli_{Q\,,\, Q\subset  Q_0} \mu_Q \inf_{x\in Q}
M_{\bar{B}}(\sigma^{\frac{1}{p}}\chi_{Q_0})^p\\
\lesssim K^p \|M_{\bar{B}}(\sigma^{\frac{1}{p}}\chi_{Q_0})\|^p_{L^p(dx)}
\lesssim  K^p \|\sigma^{\frac{1}{p}}\chi_{Q_0}\|^p_{L^p(dx)} = K^p \sigma(Q_0).
\end{multline*}

\smallskip

This would complete the proof except that we must now sum over $a$,
and in \eqref{main} this sum goes from $-\infty$ to the logarithm of
the two-weight $A_p$ constant of the pair $(u,\sigma)$.    We cannot
evaluate this sum unless we can modify the above argument to yield a
decay constant in $a$.  In the one-weight argument in \cite{HyLa} the
authors could use the fact that the parameter $a$ run from $0$ to the
logarithm of $A_p$ constant:   this follows since by H\"older's
inequality the $A_p$ constant of any weight is at least $1$.    In
the two-weight case  the $A_p$ constant can be arbitrarily small, and
therefore we must sum over infinitely many values of $a$. We are able to
get the desired decay constant only by assuming that we are working
with log bumps.

We modify the above argument as follows.  Essentially, we will use the
properties of log bumps to replace $\bar{B}$ with a slightly larger
Young function.  Define $B_0(t)= t^{p'} \log (e
+t)^{p'-1+\frac{\delta}2}$; then we again have that $\bar{B}_0\in
B_{p}$.  Instead of \eqref{B} we will prove that there exists
$\gamma$, $0<\gamma<1$, such that
\begin{equation}
\label{ag}
\sli_{Q\subset Q_0} \frac{u(Q)}{|Q|} \left(\frac{\sigma(Q)}{|Q|}\right)^{p}\mu_Q
\leqslant K^{(1-\gamma)p} 2^{a\gamma p}\sli_{ Q\subset  Q_0} \mu_Q \inf_{x\in Q}
M_{\bar{B_0}}(\sigma^{\frac{1}{p}}\chi_{Q_0})^p.
\end{equation}
Given inequality \eqref{ag},  we can repeat the argument above, but we
now have the decay term $2^{a\gamma p}$ which allows us to sum in $a$
and get the desired estimate.

To prove \eqref{ag} suppose for the moment that there exists $\gamma$ such that
\begin{equation}
\label{interp1}
 \|\sigma^{\frac{1}{p'}}\|_{Q, B_0}
\le C_1  \|\sigma^{\frac{1}{p'}}\|_{Q, B}^{1-\gamma}  \|\sigma^{\frac{1}{p'}}\|_{L^{p'}(Q, dx/|Q|)}^{\gamma}\,.
 \end{equation}
Given this, fix a cube $Q\in
\mathcal{P}^a$---we can do this since otherwise $\mu_Q=0$.  Then
\begin{align*}
&  \frac{u(Q)}{|Q|} \left(\frac{\sigma(Q)}{|Q|}\right)^{p}  \\
& \qquad \qquad \le \langle u\rangle_Q \| \sigma^{1/p'}\|_{B_0, Q} ^p\|
\sigma^{1/p}\|_{\bar{B}_0, Q}^p \\
& \qquad \qquad \le
\langle u\rangle_Q \| \sigma^{1/p'}\|_{B,
  Q}^{(1-\gamma)p}\|\sigma^{1/p'}\|_{L^{p'}(Q, dx/|Q|)}^{\gamma p}
\|\sigma^{1/p}\|_{\bar{B}_0, Q}^p \\
& \qquad \qquad = (\langle u\rangle_Q^{1/p}\| \sigma^{1/p'}\|_{B,
  Q})^{(1-\gamma)p}
\cdot(\langle u\rangle_Q^{1/p}\|\sigma^{1/p'}\|_{L^{p'}(Q, dx/|Q|)})^{\gamma p}
\cdot \| \sigma^{1/p}\|_{\bar{B}_0, Q}^p \\
& \qquad \qquad  \le K^{(1-\gamma)p}
\cdot (\langle
u\rangle_Q^{1/p}\langle\sigma\rangle_Q^{1/p'})^{\gamma\,p}\cdot \|
\sigma^{1/p}\|_{\bar{B}_0, Q}^p\\
& \qquad \qquad \le K^{(1-\gamma)p}\cdot  2^{a\gamma p}\cdot
\|\sigma^{1/p}\|_{\bar{B}_0, Q}^p \\
& \qquad \qquad \le K^{(1-\gamma)p}\cdot  2^{a\gamma p}\cdot
\inf_{x\in Q}
M_{\bar{B_0}}(\sigma^{\frac{1}{p}}\chi_{Q_0})^p.
\end{align*}
Inequality~\eqref{ag} now follows immediately.

Therefore,  to complete
the proof we must establish \eqref{interp1}.  By the rescaling
properties of the Luxemburg norm \cite[Section~5.1]{CU-M-P-book}, the
right-hand side of this inequality is equal to
\[  \|\sigma^{\frac{1-\gamma}{p'}}\|_{C,Q}
\|\sigma^{\frac{\gamma}{p'}}\|_{p'/\gamma,Q}, \]
where $C(t)=B(t^{\frac{1}{1-\gamma}})$.   Therefore, by the generalized
H\"older's inequality in Orlicz spaces
(\cite[Lemma~5.2]{CU-M-P-book}), inequality \eqref{interp1} holds if
for all $t>1$,
\begin{equation} \label{balance}
C^{-1}(t) t^{\frac{\gamma}{p'}} \lesssim B_0^{-1}(t).
\end{equation}
A straightforward calculation (see \cite[Section~5.4]{CU-M-P-book}) shows
that
\[ C^{-1}(t) = B^{-1}(t)^{1-\gamma} \approx
\frac{t^{\frac{1-\gamma}{p'}}}{\log(e+t)^{\frac{1-\gamma}{p}+\frac{\delta(1-\gamma)}{p'}}},
\qquad
B_0^{-1}(t)\approx
\frac{t^{\frac{1}{p'}}}{\log(e+t)^{\frac{1}{p}+\frac{\delta}{2p'}}}. \]
By equating the exponents on the logarithm terms, we see that
\eqref{balance} holds if we take
\[ \gamma = \frac{\delta}{2( p'-1+\delta)}. \]
Therefore, with this value of $\gamma$ inequality \eqref{interp1} holds, and this completes
our proof.

\smallskip

For the convenience of the reader we give a direct proof of
\eqref{interp1};  this computation will also be used below in
Section~\ref{ll}.   The desire inequality obviously follows from the
following lemma.  

\begin{lemma}
 \label{interp2}
 Given a probability measure $\mu$, let $f$ be a non-negative
 measurable function.  Let $B, B_0$ be logarithmic bumps as in
 \eqref{eqn:right-bump}  with $\delta=\tau$ and $\delta=\frac{\tau}2$ respectively.
 Then there exists an absolute constant $C$ and $\gamma=\gamma (p',\tau)>0$ such that
 \begin{equation}
 \label{interp3}
 \|f\|_{B_0, \mu} \le C\,\|f\|_{B, \mu}^{1-\gamma}\, \|f\|_{L^{p'}(\mu)}^{\gamma}\,.
 \end{equation}
  \end{lemma}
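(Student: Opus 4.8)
The plan is to prove \eqref{interp3} by a direct Luxemburg-norm computation, reducing everything to a pointwise comparison of the inverse Young functions, exactly in the spirit of the generalized H\"older inequality in Orlicz spaces used above. First I would normalize: by homogeneity of all three norms in $f$, it suffices to treat the case $\|f\|_{B,\mu}=1$, so that $\int B(f)\,d\mu\le 1$. Write $s=\|f\|_{L^{p'}(\mu)}$; we must show $\|f\|_{B_0,\mu}\lesssim s^{\gamma}$, i.e. that $\int B_0\!\big(f/(C s^{\gamma})\big)\,d\mu\le 1$ for a suitable absolute constant $C$. Recall from the displayed formulas above that, up to constants,
\[
B(t)\approx \frac{t^{p'}}{\;}\cdot \big(\log(e+t)\big)^{p'-1+\tau}, \qquad
B_0(t)\approx t^{p'}\big(\log(e+t)\big)^{p'-1+\tau/2},
\]
so $B_0(t)/B(t)\approx \big(\log(e+t)\big)^{-\tau/2}$, which is where the gain will come from.

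Next I would split the domain at a threshold depending on $s$. On the set where $f$ is ``large'' relative to $s$, say $f> s^{-a}$ for an exponent $a>0$ to be chosen, the factor $\big(\log(e+f)\big)^{-\tau/2}$ is at most $\big(\log(e+s^{-a})\big)^{-\tau/2}\lesssim |\log s|^{-\tau/2}$ when $s$ is small, which is $\le s^{\gamma'}\cdot(\text{const})$ for any $\gamma'>0$ as $s\to 0$; hence on this part $\int_{\{f>s^{-a}\}} B_0(f)\,d\mu \lesssim s^{\gamma''}\int B(f)\,d\mu \le s^{\gamma''}$. On the complementary set where $f\le s^{-a}$, I would instead use that $B_0(t)\lesssim t^{p'}\big(\log(e+t)\big)^{p'-1+\tau/2}$ is dominated on this range by a constant (depending on $a,\tau,p'$) times $t^{p'}(\log(e+s^{-a}))^{p'-1+\tau/2}\lesssim t^{p'}|\log s|^{p'-1+\tau/2}$, and then $\int_{\{f\le s^{-a}\}} B_0(f)\,d\mu\lesssim |\log s|^{p'-1+\tau/2}\int f^{p'}\,d\mu = |\log s|^{p'-1+\tau/2}\, s^{p'}$, which again is $\lesssim s^{\gamma'''}$ for small $s$. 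Rescaling $f\mapsto f/(Cs^{\gamma})$ with $\gamma$ chosen strictly smaller than $\min$ of the various positive powers produced, and absorbing the logarithmic factors $|\log s|^{\bullet}$ into a small power of $s$, forces $\int B_0(f/(Cs^{\gamma}))\,d\mu\le 1$; the case $s\gtrsim 1$ (equivalently $s^\gamma\gtrsim 1$) is trivial since then $\|f\|_{B_0,\mu}\le \|f\|_{B,\mu}\lesssim \|f\|_{B,\mu}^{1-\gamma}s^\gamma$ because $B_0\le B$ up to a constant. Tracking the exponents carefully gives an admissible value such as $\gamma=\dfrac{\tau}{2(p'-1+\tau)}$, consistent with the computation via \eqref{balance} above.

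Alternatively — and this is probably the cleanest writeup — I would bypass the domain splitting and argue exactly as in the computation preceding the lemma: by the rescaling property of the Luxemburg norm, $\|f\|_{B,\mu}^{1-\gamma}\|f\|_{L^{p'}(\mu)}^{\gamma}=\|f^{1-\gamma}\|_{C,\mu}\|f^{\gamma}\|_{p'/\gamma,\mu}$ with $C(t)=B(t^{1/(1-\gamma)})$, and then invoke \cite[Lemma~5.2]{CU-M-P-book}, which reduces \eqref{interp3} to the pointwise inequality $C^{-1}(t)\,t^{\gamma/p'}\lesssim B_0^{-1}(t)$ for $t>1$. Using $C^{-1}(t)=B^{-1}(t)^{1-\gamma}\approx t^{(1-\gamma)/p'}\big(\log(e+t)\big)^{-(1-\gamma)(1/p+\tau/p')}$ and $B_0^{-1}(t)\approx t^{1/p'}\big(\log(e+t)\big)^{-(1/p+\tau/(2p'))}$, the powers of $t$ match automatically, and matching (or rather, dominating) the logarithmic exponents gives the condition $(1-\gamma)\big(\tfrac1p+\tfrac{\tau}{p'}\big)\ge \tfrac1p+\tfrac{\tau}{2p'}$, which holds for $\gamma$ small, e.g. $\gamma=\dfrac{\tau}{2(p'-1+\tau)}$ (note $\tfrac1p=1-\tfrac1{p'}$, so this is an elementary inequality in $\gamma$). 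The main obstacle, such as it is, is purely bookkeeping: being careful that the implied constants in ``$\approx$'' for $\bar B$, $B$, $B_0$ are genuinely absolute (independent of the measure $\mu$ and of $f$), and checking that the doubling of $B,B_0$ lets us pass freely between the ``$\int B\le 1$'' formulation and the inverse-function formulation; there is no analytic difficulty beyond this.
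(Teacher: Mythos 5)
Your second, preferred argument is correct, and it is essentially the derivation the paper already gives immediately \emph{before} the lemma: the rescaling identity $\|f\|_{B,\mu}^{1-\gamma}\|f\|_{L^{p'}(\mu)}^{\gamma}=\|f^{1-\gamma}\|_{C,\mu}\,\|f^{\gamma}\|_{p'/\gamma,\mu}$ with $C(t)=B(t^{1/(1-\gamma)})$, the generalized H\"older inequality, and the comparison of inverse functions \eqref{balance}, yielding the same admissible value $\gamma=\tau/(2(p'-1+\tau))$. The paper's own proof of the lemma is instead the direct Luxemburg-norm computation (splitting at a height $K$ and optimizing $\epsilon$ and $K$); it is offered precisely as a self-contained alternative to the inverse-function route, and it is the version that is reused for the loglog bumps in Section~\ref{ll}. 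So your second route is fine, but it reproduces the paper's first derivation of \eqref{interp1} rather than giving an independent proof.

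Your first argument has a genuine error. On $\{f>s^{-a}\}$ you correctly gain the factor $\bigl(\log(e+s^{-a})\bigr)^{-\tau/2}\approx|\log s|^{-\tau/2}$, but you then assert that this is $\le Cs^{\gamma'}$ for some $\gamma'>0$ as $s\to0$; the inequality goes the other way, since $|\log s|^{-\tau/2}$ decays only logarithmically and $|\log s|^{-\tau/2}/s^{\gamma'}\to\infty$ for every $\gamma'>0$. With a power-type cutoff $f>s^{-a}$ the splitting therefore only gives $\int B_0(f)\,d\mu\lesssim|\log s|^{-\tau/2}$, which cannot survive the factor $\epsilon^{-p'}\approx s^{-\gamma p'}$ produced by the rescaling $f\mapsto f/(Cs^{\gamma})$ (and note that even a genuine power bound $\int B_0(f)\,d\mu\lesssim s^{\gamma''}$ would force $\gamma\le\gamma''/p'$, not merely $\gamma<\gamma''$). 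The fix is to cut at a far larger height: take $K$ with $\log(e+K)\approx s^{-\beta}$, i.e.\ $K\approx\exp(s^{-\beta})$, so that the gain $(\log(e+K))^{-\tau/2}\approx s^{\beta\tau/2}$ is a true power while the loss $(\log(e+K))^{p'-1+\tau/2}$ on the complementary set is still absorbed by $\int f^{p'}\,d\mu=s^{p'}$; this is exactly the paper's choice of $K$ in terms of $\epsilon$. Alternatively, the pointwise identity $B_0(t)\approx\bigl(t^{p'}\bigr)^{\gamma}B(t)^{1-\gamma}$ with $\gamma=\tau/(2(p'-1+\tau))$ plus H\"older's inequality gives $\int B_0(f)\,d\mu\le\bigl(\int f^{p'}\,d\mu\bigr)^{\gamma}\bigl(\int B(f)\,d\mu\bigr)^{1-\gamma}$ in one line, after which the rescaling step does close.
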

\begin{proof}
  We will actually show that $\gamma =\frac1{2+(p'-1)\frac2{\tau}}$.
  Define $\Delta:= \int |f|^{p'}\,d\mu$.    Since
  inequality~\eqref{interp3} is homogeneous, we may assume without
  loss of generality that 
\begin{equation}
\label{1}
\|f\|_{B, \mu}=1\,.
\end{equation}
Moreover, we may assume that $\Delta\leq 1$:
otherwise \eqref{interp3} can be achieved by choosing $C$ sufficiently
large.
Let $\epsilon<1$ and $K$  be constants; we will determine their
precise value (in this order) below.   Then we have that
\begin{align*}
& \int \frac{f^{p'}}{\epsilon^{p'}}\log\bigg(e
+\frac{f}{\epsilon}\bigg)^{p'-1+\frac{\tau}2}\,d\mu \\
& \qquad \le \int_{\{f \le K\epsilon\}}\dots + \int_{\{f\ge K\epsilon\}}\dots \\
& \qquad \le \frac{\Delta}{\epsilon^{p'}} [\log (e+K)]^{p'-1+\frac{\tau}2}
+\int_{\{f\ge K\epsilon\}}
\frac{f^{p'}}{\epsilon^{p'}}\frac{\log(e
  +\frac{f}{\epsilon})^{p'-1+\tau}}{[\log (e+K)]^{\frac{\tau}2} }\,d\mu  \\
& \qquad \le \frac{\Delta}{\epsilon^{p'}} [\log (e+K)]^{p'-1+\frac{\tau}2} +
\int
\frac{f^{p'}}{\epsilon^{p'}}\frac{\log(\frac{e}{\epsilon}
  +\frac{f}{\epsilon})^{p'-1+\tau}}{[\log (e+K)]^{\frac{\tau}2} }\,d\mu \\ 
& \qquad \le \frac{\Delta}{\epsilon^{p'}} [\log
(e+K)]^{p'-1+\frac{\tau}2} \\
& \qquad \qquad +
\frac1{\epsilon^{p'}[\log (e+K)]^{\tau/2}} \bigg[ \int
f^{p'}\log^{p'-1+\tau}(e +f)\,d\mu  + 
\int f^{p'}\log (\epsilon^{-1})^{p'-1+\tau}\,d\mu\bigg] \\
& \qquad \le  \frac{\Delta}{\epsilon^{p'}} [\log (e+K)]^{p'-1+\frac{\tau}2} +
\frac1{\epsilon^{p'}[\log (e+K)]^{\tau/2}} \bigg[ 1+  \Delta \log (\epsilon^{-1})^{p'-1+\tau}\bigg] \,.
\end{align*}
In the last line we used \eqref{1}. Fix $\epsilon$ so that 
$$
\Delta = (\epsilon^{p'})^{1+c}\,,
$$
where $c=1+(p'-1)\frac2{\tau}$.   In other words,
$$
\epsilon = (\Delta^{1/p'})^{\gamma}= \|f\|_{L^{p'}(\mu)}^{\gamma}\,,\,\gamma=\frac1{1+c}\,.
$$
Now choose  $K$ so that 
$$
[\log (e+K)]^{\tau/2} \approx \epsilon^{-p'}\,;
$$
then
$$
[\log (e+K)]^{p'-1+ \tau/2} \approx  (\epsilon^{-p'})^{1+(p'-1)\frac2{\tau}}
=:(\epsilon^{-p'})^{c}\,.
$$
If we substitute these values into the above calculation, we see that
the right hand side is dominated by a constant.  Hence,  by the
definition of the  Luxemburg norm, 
$$
\|f\|_{B_0,\mu}\le C\, \epsilon= C\,\|f\|_{L^{p'}(\mu)}^{\gamma} \,.
$$
This completes the proof.
\end{proof}

\begin{zamech} \label{rem:second-bump}
The conjugate testing condition
  can be verified similarly.  By Remark~\ref{rem:dual} the adjoint
  $S^*$ is also a Haar shift, and so we can apply the distribution inequality from Theorem \ref{main}
  to  it.   Also, the second sum in
  \eqref{outin} will have the same pointwise estimate (exchanging
  $\sigma$ and $v$) if we replace $S$ with $S^*$.
\end{zamech}

\begin{zamech}
  In the proof of the first testing condition we only used the bump
  condition~\eqref{eqn:alt-separated-bumps}; to prove the second
  testing condition we use the second bump
  condition~\eqref{eqn:alt-separated-bumps-weak}.
\end{zamech}

\section{Proof of Theorem~\ref{thm:dyadic-result-weak}}
\label{section:proof-weak}

The proof of the weak-type inequality uses essentially the same
argument as above; here we sketch the changes required.  We repeat the
argument that yields inequality~\eqref{eqn:strong-reduction},
replacing the $L^p(u)$ norm with the $L^{p,\infty}(u)$ norm.  Since
the pair $(u,\sigma)$ satisfies the two-weight $A_p$ condition we have the
well known inequality that
\[ \|M(f\sigma)\|_{L^{p,\infty}(u)} \leq C\|f\|_{L^p(\sigma)}, \]
where the constant $C$ depends only on the $A_p$ constant and the
dimension.  Therefore it remains to estimate the $L^{p,\infty}(u)$
norm of $S_{\mathcal{L}}(|f|\sigma)$.  However,
  from Hyt\"onen, {\em et al.}~\cite[Theorem~4.3]{HLM+} we have the following analog of
  Theorem~\ref{thm:testing}.

 \begin{theorem} \label{thm:testing-weak}
Let $S$ be a positive Haar shift of complexity $(m,n)$. Then
\[
\|S(\cdot \sigma)\|_{L^p(\sigma)\to L^{p,\infty}(u)}
\leqslant \tau \|M(\cdot \sigma)\|_{L^p(\sigma)\to L^{p,\infty}(u)} +
\sup_Q \frac{\|\chi_Q S^*(\chi_Q u)\|_{L^p(\sigma)}}{u(Q)^{\frac{1}{p}}}.
\]
\end{theorem}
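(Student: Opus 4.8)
The plan is to prove this weak-type estimate by a Calder\'on--Zygmund stopping-time decomposition followed by a self-improvement (bootstrap), in the spirit of Sawyer~\cite{S}, Hyt\"onen and Lacey~\cite{HyLa}, and Hyt\"onen \emph{et al.}~\cite{HLM+}. Write $\mathcal M:=\|M(\cdot\sigma)\|_{L^p(\sigma)\to L^{p,\infty}(u)}$ and let $\mathcal A$ be the dual testing constant on the right-hand side, so that $\|\chi_Q S^{\ast}(\chi_Q u)\|_{L^{p'}(\sigma)}\le\mathcal A\,u(Q)^{1/p'}$ for every cube $Q$. Since $S$ is positive I would first reduce, via $|S(f\sigma)|\le S(|f|\sigma)$ and monotone convergence, to the case where $f\ge 0$ is bounded with compact support and $S$ is a finite partial sum; then $S(f\sigma)$ is constant on all sufficiently fine dyadic cubes, its level sets are unions of dyadic cubes, and $N:=\|S(f\sigma)\|_{L^{p,\infty}(u)}<\infty$. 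The problem is then reduced to the single-level estimate
\[
u\big(\{S(f\sigma)>3\lambda\}\big)\ \lesssim\ \Big(\frac{\tau\mathcal M}{\lambda}\Big)^{p}\,\|f\|_{L^p(\sigma)}^{p}\ +\ \frac{\mathcal A}{\lambda}\,\|f\|_{L^p(\sigma)}\,u\big(\{S(f\sigma)>\lambda\}\big)^{1/p'},\qquad \lambda>0 .
\]

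To prove it I would split $\Omega_\lambda:=\{S(f\sigma)>\lambda\}$ into its maximal dyadic cubes $\{Q_j\}$ and, for each $j$, pick $y_j\in\widehat{Q_j}\setminus Q_j$ with $S(f\sigma)(y_j)\le\lambda$ (possible since $\widehat{Q_j}\not\subset\Omega_\lambda$). The main step is the pointwise bound on $Q_j$,
\[
S(f\sigma)\ \le\ C\tau\,M^{\mathrm d}(f\sigma)\ +\ S(f\sigma)(y_j)\ +\ S(\chi_{Q_j}f\sigma)\ \le\ C\tau\,M^{\mathrm d}(f\sigma)\ +\ \lambda\ +\ S(\chi_{Q_j}f\sigma),
\]
$M^{\mathrm d}$ being the dyadic maximal operator. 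This should follow, much as in~\eqref{outin}, by writing $S(f\sigma)=\sum_R S_R(f\sigma)$ and grouping: the cubes $R\subseteq Q_j$ contribute at most $S(\chi_{Q_j}f\sigma)$ on $Q_j$ by positivity; the $\tau$ ancestors $R\supsetneq Q_j$ with $\ell(R)\le 2^{\tau}\ell(Q_j)$ (the ``complexity window'') contribute, at $x\in Q_j$, at most $C\sum_{k=1}^{\tau}\av{f\sigma}{Q_j^{(k)}}\le C\tau\,M^{\mathrm d}(f\sigma)(x)$; and on the remaining ancestors $R$, with $\ell(R)>2^{\tau}\ell(Q_j)$, the function $S_R(f\sigma)$ is constant on a dyadic cube containing both $Q_j$ and $y_j$ (a shift of complexity $(m,n)$ produces functions that are constant on dyadic cubes $m+1\le\tau$ generations down), so their total contribution on $Q_j$ equals the corresponding sum at $y_j$, hence is $\le S(f\sigma)(y_j)\le\lambda$. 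This is the only place a factor of $\tau$ is lost, exactly as in \cite[Lemma~2.4]{HyLa}.

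From the pointwise bound, $\{S(f\sigma)>3\lambda\}\subset\{C\tau M^{\mathrm d}(f\sigma)>\lambda\}\cup\bigcup_j\{x\in Q_j:S(\chi_{Q_j}f\sigma)(x)>\lambda\}$; the first set has $u$-measure at most $(C\tau/\lambda)^p\|M(f\sigma)\|_{L^{p,\infty}(u)}^p\le(C\tau\mathcal M/\lambda)^p\|f\|_{L^p(\sigma)}^p$. For the local sets $E_j:=\{x\in Q_j:S(\chi_{Q_j}f\sigma)(x)>\lambda\}$ I would use Chebyshev ($\lambda\,u(E_j)\le\int_{E_j}S(\chi_{Q_j}f\sigma)\,u\,dx$), the adjointness identity $\int_{E_j}S(\chi_{Q_j}f\sigma)\,u\,dx=\int_{Q_j}f\cdot S^{\ast}(\chi_{E_j}u)\,d\sigma$, H\"older's inequality on $L^p(\sigma)$, and the positivity of $S^{\ast}$ (which gives $\chi_{Q_j}S^{\ast}(\chi_{E_j}u)\le\chi_{Q_j}S^{\ast}(\chi_{Q_j}u)$) to obtain
\[
\lambda\,u(E_j)\ \le\ \|f\chi_{Q_j}\|_{L^p(\sigma)}\,\|\chi_{Q_j}S^{\ast}(\chi_{Q_j}u)\|_{L^{p'}(\sigma)}\ \le\ \mathcal A\,\|f\chi_{Q_j}\|_{L^p(\sigma)}\,u(Q_j)^{1/p'}.
\]
Summing over $j$, H\"older's inequality for sums and the disjointness of the $Q_j$ (so $\sum_j\|f\chi_{Q_j}\|_{L^p(\sigma)}^p\le\|f\|_{L^p(\sigma)}^p$ and $\sum_j u(Q_j)=u(\Omega_\lambda)$) then yield $\sum_j u(E_j)\le\lambda^{-1}\mathcal A\,\|f\|_{L^p(\sigma)}\,u(\Omega_\lambda)^{1/p'}$, which is the single-level estimate.

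It remains to bootstrap: multiplying the single-level estimate by $(3\lambda)^p$, inserting $u(\Omega_\lambda)^{1/p'}\le(\lambda^{-p}N^p)^{1/p'}=\lambda^{1-p}N^{p-1}$, and taking the supremum over $\lambda>0$ gives $N^p\lesssim(\tau\mathcal M)^p\|f\|_{L^p(\sigma)}^p+\mathcal A\,\|f\|_{L^p(\sigma)}\,N^{p-1}$; since $N<\infty$, Young's inequality absorbs the second term into the left-hand side and yields $N\lesssim(\tau\mathcal M+\mathcal A)\|f\|_{L^p(\sigma)}$, the general case following by monotone convergence. The hard part of this scheme is structural rather than computational: unlike the strong-type statement of Theorem~\ref{thm:testing}, no testing condition on $S$ itself is available to control the local terms directly, and it is precisely the weakness of the $L^{p,\infty}(u)$ target---exploited through the a priori finiteness of $N$ and the self-improving inequality above---that has to make up for its absence; the secondary technical point is the careful routing of the shift complexity through the pointwise bound so that only one power of $\tau$ is lost.
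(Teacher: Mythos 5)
Your argument is essentially correct, but note that you have (tacitly, and rightly) proved the scaling-consistent version of the statement rather than the literal one: your dual testing constant is $\mathcal A=\sup_Q u(Q)^{-1/p'}\|\chi_Q S^*(\chi_Q u)\|_{L^{p'}(\sigma)}$, while the display in the theorem reads $\sup_Q u(Q)^{-1/p}\|\chi_Q S^*(\chi_Q u)\|_{L^{p}(\sigma)}$. For $p\neq 2$ the printed quantity has the wrong homogeneity (replacing $u$ by $cu$ scales the left-hand side and the maximal term by $c^{1/p}$ but the printed testing term by $c^{1/p'}$), so it must be read as the $L^{p'}(\sigma)$ norm against $u(Q)^{1/p'}$; this is the form in \cite[Theorem~4.3]{HLM+} and the one the paper actually verifies, via \eqref{eqn:alt-separated-bumps-weak}, in Remark~\ref{rem:second-bump}. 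With that reading (and with the harmless dimensional constant your argument produces, which is all the application needs), your proof checks out: the splitting of $\sum_R S_R(f\sigma)$ on a maximal dyadic cube $Q_j$ of $\Omega_\lambda$ into cubes inside $Q_j$, the $\tau$ ancestors in the complexity window, and the far ancestors (whose outputs are constant on $\widehat{Q_j}$, since a shift of complexity $(m,n)$ produces functions constant $m+1\le\tau$ generations down, and hence are bounded via positivity by $S(f\sigma)(y_j)\le\lambda$) is correct; so are the Chebyshev--duality--positivity estimate for the sets $E_j$ and the bootstrap using $u(\Omega_\lambda)\le\lambda^{-p}N^p$ together with the a priori finiteness of $N$ secured by your reduction to finite partial sums and bounded, compactly supported $f\ge 0$.

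Your route is, however, genuinely different from the paper's, which contains no proof at all: Theorem~\ref{thm:testing-weak} is simply quoted from \cite[Theorem~4.3]{HLM+}, just as Theorem~\ref{thm:testing} is assembled from \cite{HyLa,HLM+}. What you supply is a self-contained Sawyer-type argument (in the spirit of \cite{S,Smax}) specialized to positive dyadic shifts: a Calder\'on--Zygmund stopping decomposition, a pointwise good-$\lambda$ inequality that loses exactly one factor of $\tau$, and a weak-norm self-improvement in place of any testing condition on $S$ itself. The citation buys brevity and the extra generality of \cite{HLM+} (maximal truncations, a full characterization), while your argument buys transparency: it isolates precisely the positivity and constancy-across-$\tau$-generations properties being used, makes explicit the linear dependence on $\tau$ that Theorem~\ref{thm:dyadic-result-weak} requires, and would render the paper self-contained on this point.
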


Given Theorem~\ref{thm:testing-weak} the argument now proceeds exactly
as before, using the bump
condition~\eqref{eqn:alt-separated-bumps-weak} to bound the testing
condition.  This completes the proof.

\section{Proof of Theorems~\ref{thmllst} and~\ref{thmllw}}
\label{ll}

Our proofs are very similar to the proofs given in
Sections~\ref{section:proof} and~\ref{section:proof-weak}, so we will
describe the principle changes.  Following the argument in
Section~\ref{section:prelim}, it will suffice to prove the corresponding
results for dyadic shifts.

\begin{theorem} \label{thmllst-dyadic} Given $p$, $1<p<\infty$,
  suppose $A$ and $B$ are loglog-bumps of the form \eqref{llA},
  \eqref{llB} with $\delta>0$ {\bf sufficiently large}, and the pair
  of weights $(u,\sigma)$ satisfies~\eqref{eqn:separated-bumps}
  and~\eqref{eqn:separated-bumps-weak}.  Given any dyadic shift $S$ of
  complexity $(m,n)$, $\tau=\max(m,n)+1$, $\|S(f\sigma)\|_{L^p(u)}\leq
  C\tau^3\|f\|_{L^p(\sigma)}$, where $C$ depends only on the dimension
  $d$ and the suprema in~\eqref{eqn:separated-bumps}
  and~\eqref{eqn:separated-bumps-weak}.
\end{theorem}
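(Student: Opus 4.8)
The plan is to prove Theorem~\ref{thmllst-dyadic} by following the proof of Theorem~\ref{thm:dyadic-result} in Section~\ref{section:proof} essentially line for line, isolating the single place where logarithmic bumps are used --- the interpolation inequality \eqref{interp1} (equivalently Lemma~\ref{interp2}) --- and replacing it by a weaker, \emph{logarithmic} interpolation inequality suited to loglog bumps. First I would observe that for loglog bumps \eqref{llA} and \eqref{llB} one still has $\bar{A}\in B_{p'}$ and $\bar{B}\in B_{p}$ for \emph{every} $\delta>0$: these are the borderline cases of the $B_p$ criterion in which the power of $\log$ is critical, so convergence is governed by the power of $\log\log$, and that power exceeds the critical one precisely when $\delta>0$. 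Consequently the entire reduction of Section~\ref{section:proof} goes through verbatim --- the Lerner decomposition and \eqref{eqn:strong-reduction}; the passage to the positive Haar shifts $S_{\mathcal L}$; the testing-condition estimate (Theorem~\ref{thm:testing}); the splitting into $\mathcal K_a$ and $\mathcal P^a$; the exponential-decay distributional inequality (Theorem~\ref{di}); and inequality \eqref{main}. Thus, exactly as before, it suffices to show $\sum_a\big(\sum_{P\in\mathcal P^a}u(P)\av{\sigma}{P}^{p}\big)^{1/p}\lesssim K\,\sigma(Q_0)^{1/p}$ (together with the symmetric estimate for $S^{*}$, which uses \eqref{llA}), and the powers of $\tau$ enter in precisely the same places, giving the cubic dependence.

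The new ingredient is a loglog replacement for Lemma~\ref{interp2}: if $\mu$ is a probability measure and $f\ge 0$, and $B_0$ is the Young function obtained from the loglog bump $B$ of \eqref{llB} by lowering the loglog exponent below $p'-1+\delta$ (but keeping it above $p'-1$, so that $\bar{B}_0\in B_p$ still), then there is $c=c(p,\delta)$, with $c$ proportional to $\delta$ (implied constant depending on $p$), and an absolute $C$, such that
\[
\|f\|_{B_0,\mu}\;\le\; C\,\|f\|_{B,\mu}\,\Big(1+\log_+\frac{\|f\|_{B,\mu}}{\|f\|_{L^{p'}(\mu)}}\Big)^{-c}.
\]
The proof is the same style of computation as in Lemma~\ref{interp2}: normalise $\|f\|_{B,\mu}=1$, put $\Delta=\int f^{p'}\,d\mu\le 1$, split the defining integral for $\|f/\epsilon\|_{B_0,\mu}$ at the level $f=K\epsilon$; bound the contribution of $\{f\le K\epsilon\}$ by $\epsilon^{-p'}\Delta\,\log(e+K)^{p'-1}\log\log(e^e+K)^{O(1)}$, and the contribution of $\{f\ge K\epsilon\}$ by $\epsilon^{-p'}\log\log(e^e+K\epsilon)^{-(\text{gap})}$ times a power of $\log(1/\epsilon)$ (using that the $\log\log$ is large on the big set and converting the $f/\epsilon$-logarithms into $f$-logarithms there), and then optimise over $K$ and $\epsilon$. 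Because of the double logarithm the resulting gain is only a negative power of $\log(1/\Delta)$ rather than a power of $\Delta$; tracking the exponent gives $\|f\|_{B_0,\mu}\lesssim(\log(1/\Delta))^{-c}$, which is the stated inequality after rewriting in terms of $\|f\|_{B,\mu}/\|f\|_{L^{p'}(\mu)}$. Being careful with the choice of $B_0$ and the split point is what yields the explicit thresholds $\delta>(p-1)^{-1}$ in \eqref{llB} and $\delta>(p'-1)^{-1}$ in \eqref{llA}.

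With this lemma I would run the argument of Section~\ref{section:proof}, replacing \eqref{interp1}--\eqref{ag} as follows. Fix $a$ and $P\in\mathcal P^a$, and set $\rho_P:=\|\sigma^{1/p'}\|_{B,P}\big/\av{\sigma}{P}^{1/p'}$. Since $B(t)\gtrsim t^{p'}$ we have $\rho_P\ge 1$, and the bump condition \eqref{eqn:separated-bumps} together with $P\in\mathcal K_a$ (so $\av{u}{P}^{1/p}\av{\sigma}{P}^{1/p'}\ge 2^a$) gives $\rho_P\le K2^{-a}$; meanwhile the lemma gives $\|\sigma^{1/p'}\|_{B_0,P}\big/\av{\sigma}{P}^{1/p'}\le C\,\rho_P(1+\log_+\rho_P)^{-c}$. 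Because $t\mapsto t(1+\log t)^{-c}$ is increasing for $t$ large, for all sufficiently negative $a$ the right side is at most $C\,K2^{-a}(1+\log(K2^{-a}))^{-c}$, so multiplying by $\av{u}{P}^{1/p}\av{\sigma}{P}^{1/p'}< 2^{a+1}$ gives
\[
\av{u}{P}^{1/p}\,\|\sigma^{1/p'}\|_{B_0,P}\;\le\; C\,K\,(1+|a|)^{-c}.
\]
This is the loglog substitute for the displayed chain of inequalities in Section~\ref{section:proof}; note that, unlike in the log-bump case, one must exploit the eventual monotonicity of $t(1+\log t)^{-c}$ to convert the one-sided bound $\rho_P\le K2^{-a}$ into decay. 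From here the argument closes as in Section~\ref{section:proof}: using $\av{\sigma}{P}\le C\|\sigma^{1/p'}\|_{B_0,P}\|\sigma^{1/p}\|_{\bar{B}_0,P}$, that $\{|P|\}_{P\in\mathcal P^a}$ is a Carleson sequence, the folk maximal-function lemma, and Theorem~\ref{thm:perez-orlicz-max} applied to $\bar{B}_0\in B_p$, one obtains $\sum_{P\in\mathcal P^a}u(P)\av{\sigma}{P}^{p}\lesssim K^p(1+|a|)^{-cp}\sigma(Q_0)$, hence $\sum_a\big(\sum_{P\in\mathcal P^a}u(P)\av{\sigma}{P}^{p}\big)^{1/p}\lesssim K\,\sigma(Q_0)^{1/p}\sum_{a}(1+|a|)^{-c}$. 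The last sum converges exactly because $\delta$ is large enough to force $c>1$; this is the only point where largeness of $\delta$ is used, since here $a$ ranges over infinitely many negative integers, whereas in the log-bump case the decay in $a$ was geometric and in the one-weight case $a\ge 0$. The conjugate testing condition is handled the same way using \eqref{llA}, and the corresponding weak-type result (Theorem~\ref{thmllw}) follows by the identical modification of Section~\ref{section:proof-weak}, using Theorem~\ref{thm:testing-weak}.

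I expect the main obstacle to be the logarithmic interpolation lemma itself --- carrying out the optimisation in $K$ and $\epsilon$ cleanly with the double logarithm present, and extracting a good enough dependence of $c$ on $\delta$ to reach the sharp-looking thresholds. Everything else is a routine transcription of Section~\ref{section:proof}, once one notices the secondary point that the one-sided control of $\rho_P$ yields decay only through the eventual monotonicity of $t(1+\log t)^{-c}$, which is the structural difference from the log-bump proof.
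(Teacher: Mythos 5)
Your proposal is correct and follows the paper's own proof essentially step for step: Section~\ref{ll} of the paper proves exactly your logarithmic-gain interpolation inequality (stated there as \eqref{eps} with $\varepsilon(t)=(\log\frac{C}{t})^{-\kappa}$, obtained by the same optimisation over $K$ and $\epsilon$ as in Lemma~\ref{interp2}), inserts it into the same stopping-cube estimate, and uses largeness of $\delta$ only to make the sum over $a<0$ converge. The only differences are organisational --- you exploit the eventual monotonicity of $t(1+\log t)^{-c}$ where the paper runs a two-case analysis on an intermediate parameter $b$ with $a\le b\le b_0$, and you require $c>1$ where the paper requires $p\kappa>1$, your condition being the one that matches summing the $1/p$-th powers as written in \eqref{main}.
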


\begin{theorem} \label{thmllw-dyadic}
Given $p$, $1<p<\infty$, suppose  $A$ is a loglog-bump of the
form \eqref{llA} with $\delta>0$ {\bf sufficiently large},
and  the
pair of weights $(u,\sigma)$ satisfies~\eqref{eqn:separated-bumps-weak}.  Given any dyadic shift $S$ of complexity $(m,n)$,
$\|S(f\sigma)\|_{L^{p,\infty}(u)}\leq C\tau^2\|f\|_{L^p(\sigma)}$, where $C$ depends only on
the dimension $d$ and the supremum in~\eqref{eqn:separated-bumps-weak}.
\end{theorem}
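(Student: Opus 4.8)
The plan is as follows. As in Section~\ref{section:prelim}, Hyt\"onen's decomposition theorem reduces Theorems~\ref{thmllst} and~\ref{thmllw} to the dyadic statements above, so it suffices to prove Theorems~\ref{thmllst-dyadic} and~\ref{thmllw-dyadic}; I only indicate the changes relative to Sections~\ref{section:proof} and~\ref{section:proof-weak}. The entire scaffolding of those sections is to be run verbatim: Lerner's decomposition~\eqref{eqn:strong-reduction} (and its $L^{p,\infty}$ analogue) reduces matters to $M(f\sigma)$ --- controlled by Theorem~\ref{thm:perez-twowt-max}, resp.\ by the two-weight $A_p$ condition in the weak case --- together with the positive Haar shifts $S_{\mathcal L}$; the Sawyer-type testing theorems (Theorem~\ref{thm:testing}, resp.\ Theorem~\ref{thm:testing-weak}) reduce $S_{\mathcal L}$ to the testing condition(s); and the decomposition into the families $\mathcal K_a$, the principal cubes $\mathcal P^a$, the exponential distributional estimate of Theorem~\ref{di}, and inequality~\eqref{main} all go through unchanged. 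As stressed in Section~\ref{section:proof}, none of this uses the shape of the bumps --- only $\bar A\in B_{p'}$ and $\bar B\in B_p$. What is left is the single step where Section~\ref{section:proof} invoked the logarithmic structure, namely the passage from~\eqref{B} to~\eqref{ag}: for each $a$, bounding $\sum_{P\in\mathcal P^a}\langle u\rangle_P\bigl(\tfrac{\sigma(P)}{|P|}\bigr)^p|P|$ (and, for the conjugate testing condition, the same expression with $u$ and $\sigma$ interchanged) by a quantity summable in $a$.

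Here I would replace Lemma~\ref{interp2} by a loglog analogue. Let $B_0$ be the loglog bump obtained from $B$ by lowering the $\log\log$ exponent from $p'-1+\delta$ to $p'-1+\delta_0$ with $0<\delta_0<\delta$; since $\delta_0>0$ one still has $\bar B_0\in B_p$, so Theorem~\ref{thm:perez-orlicz-max} applies to $M_{\bar B_0}$. A power-type interpolation $\|g\|_{B_0,Q}\lesssim\|g\|_{B,Q}^{1-\gamma}\|g\|_{p',Q}^{\gamma}$ is no longer available: the criterion analogous to~\eqref{balance} fails for every $\gamma>0$, because the $\log$-scale of a loglog bump is too thin to absorb a positive power of the argument. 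Instead I would prove the \emph{logarithmic} gain
\[
 \|g\|_{B_0,\mu}\ \lesssim\ \|g\|_{B,\mu}\,\Bigl[\log\Bigl(e+\tfrac{\|g\|_{B,\mu}}{\|g\|_{L^{p'}(\mu)}}\Bigr)\Bigr]^{-\beta},\qquad \beta=\beta\bigl(p',\delta-\delta_0\bigr)>0,
\]
by the same ``split at $\{g\le K\epsilon\}$'' computation as in the proof of Lemma~\ref{interp2}, the difference being that now only the gap $\delta-\delta_0$ in the $\log\log$ exponent, rather than a full power of $t$, may be traded off. On $\mathcal K_a$ set $R=\|\sigma^{1/p'}\|_{B,Q}/\langle\sigma\rangle_Q^{1/p'}$; dividing the bump inequality $\langle u\rangle_Q^{1/p}\|\sigma^{1/p'}\|_{B,Q}\le K$ by $\langle u\rangle_Q^{1/p}\langle\sigma\rangle_Q^{1/p'}\approx2^a$ gives $1\le R\le K2^{-a}$. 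The logarithmic gain then yields $\langle u\rangle_Q\|\sigma^{1/p'}\|_{B_0,Q}^p\lesssim K^p[\log(e+R)]^{-\beta p}$, while $B_0\le B$ gives the cheap bound $\langle u\rangle_Q\|\sigma^{1/p'}\|_{B_0,Q}^p\le\langle u\rangle_Q\|\sigma^{1/p'}\|_{B,Q}^p=R^p\bigl(\langle u\rangle_Q^{1/p}\langle\sigma\rangle_Q^{1/p'}\bigr)^p\approx R^p2^{ap}$; taking the minimum of the two and optimising over $1\le R\le K2^{-a}$ (they balance at $R\approx K2^{-a}[\log(e+R)]^{-\beta}$) gives $\sup_{Q\in\mathcal P^a}\langle u\rangle_Q\|\sigma^{1/p'}\|_{B_0,Q}^p\lesssim K^p|a|^{-\beta p}$ for $a\to-\infty$. (This use of the cheap bound has no counterpart in Section~\ref{section:proof}, where the power interpolation~\eqref{interp1} already ``self-interpolates'' near $R=1$.)

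Feeding $\sup_{Q\in\mathcal P^a}\langle u\rangle_Q\|\sigma^{1/p'}\|_{B_0,Q}^p\lesssim K^p|a|^{-\beta p}$ into the argument after~\eqref{ag} --- H\"older in Orlicz spaces, the two Carleson-sequence lemmas, and Theorem~\ref{thm:perez-orlicz-max} for $M_{\bar B_0}$ --- gives $\sum_{P\in\mathcal P^a}\langle u\rangle_P\bigl(\tfrac{\sigma(P)}{|P|}\bigr)^p|P|\lesssim K^p|a|^{-\beta p}\,\sigma(Q_0)$, hence the sum over $a$ is $\lesssim K\,\sigma(Q_0)^{1/p}\sum_a|a|^{-\beta}$, which is finite \emph{precisely when $\beta>1$}. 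As $\delta_0$ may be taken arbitrarily small, $\beta>1$ holds once $\delta$ is large enough; a careful reading of the interpolation computation gives the explicit threshold $\delta>(p-1)^{-1}$ recorded in the Remark after Theorem~\ref{thmllw}. This is the entire source of the ``$\delta$ sufficiently large'' hypothesis.

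Finally, the conjugate testing condition --- which by Theorem~\ref{thm:testing-weak} is the only one needed for the weak-type Theorem~\ref{thmllw-dyadic} --- is handled identically: by Remark~\ref{rem:dual} the adjoint $S^*$ is again a positive Haar shift, and running the argument with the roles of $u$ and $\sigma$ interchanged, using~\eqref{eqn:separated-bumps-weak} and the loglog bump $A_0$ obtained from $A$ (so that $\bar A_0\in B_{p'}$), produces the companion restriction $\delta>(p'-1)^{-1}$ on the left bump. For Theorem~\ref{thmllst-dyadic} both testing conditions enter and both restrictions are imposed; for Theorem~\ref{thmllw-dyadic} only the one on $A$ appears, and the power of $\tau$ is one lower. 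The main obstacle --- and the reason the general bump conjectures remain open, and why arbitrary $\delta>0$ cannot be allowed --- lies exactly in this interpolation step: for a general Young function there is \emph{no} quantitative gain whatsoever on passing to a slightly smaller bump, and even for loglog bumps one gets only a logarithmic gain, so the summation over the $A_p$-characteristic scale $a$ converges only barely, forcing $\beta>1$, i.e.\ $\delta$ large.
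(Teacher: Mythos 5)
Your proposal is correct and follows essentially the same route as the paper's Section~\ref{ll}: the only genuinely new ingredient is the loglog analogue of Lemma~\ref{interp2} giving a logarithmic rather than power gain (the paper writes it as \eqref{eps} with $\varepsilon(t)=(\log\tfrac{C}{t})^{-\kappa}$ and $B_0$ obtained by halving $\delta$), followed by exactly your dichotomy --- your parameter $R$ and the ``cheap bound'' $R^p2^{ap}$ are the paper's case distinction on $b$ with $\langle u\rangle_Q^{1/p}\|\sigma^{1/p'}\|_{B,Q}\asymp 2^b$ --- and then summation in $a$, with the weak-type case handled through Theorem~\ref{thm:testing-weak} and the $A$-bump only, as you say. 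One minor remark: your convergence requirement $\beta>1$ (which is the condition actually forced by \eqref{main}, where the sum over $a$ sits outside the $1/p$-th power) is stronger than the condition $p\kappa>1$ stated in the paper, so your quoted explicit threshold $\delta>(p-1)^{-1}$ matches the paper's remark but not your own requirement $\beta>1$; this is harmless for the theorem as stated, since only ``$\delta$ sufficiently large'' is claimed.
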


We will  prove Theorem~\ref{thmllst-dyadic} by modifying the proof of Theorem
\ref{thm:dyadic-result} above;  Theorem~\ref{thmllw-dyadic} is proved
similarly.  The main step is to adapt Lemma \ref{interp2} to work with loglog-bumps.
Let $B$ be as in \eqref{llB}, and define $B_0$ similarly but with
$\delta$ replaced by $\delta/2$. Then arguing almost exactly as we did
in the proof of Lemma \ref{interp2},  we have that
\begin{equation}
\label{eps}
\|f\|_{B_0, \mu} \le C\|f\|_{B, \mu}\; \varepsilon\bigg(\frac{\|f\|_{L^{p'}(\mu)}}{\|f\|_{B,\mu}}\bigg)\,,
\end{equation}
where $\varepsilon (t) = (\log\frac{C}{t})^{-\kappa}$,
$C=C(p,\delta)$, and $ \kappa=\kappa(p, \delta)$ with $p\kappa >1$ if
$\delta$ is large enough. 

\begin{zamech}
To get the inequality $p\kappa>1$, it follows from the proof that it suffices to take $\delta>(p-1)^{-1}$.
\end{zamech}

Given~\eqref{eps} we have that 
\begin{align*}
&  \frac{u(Q)}{|Q|} \left(\frac{\sigma(Q)}{|Q|}\right)^{p}  \\
& \qquad \qquad \le  C\langle u\rangle_Q\| \sigma^{1/p'}\|_{B_0, Q} ^p\|
\sigma^{1/p}\|_{\bar{B}_0, Q}^p \\
& \qquad \qquad \le C\langle u\rangle_Q \| \sigma^{1/p'}\|_{B, Q} ^p\;
 \varepsilon \bigg(\frac{\langle
   \sigma\rangle_Q^{1/p'}}{\|\sigma^{1/p'}\|_{B, Q} }\bigg )^p
\| \sigma^{1/p}\|_{\bar{B}_0, Q}^p \\
& \qquad \qquad \le C\, \big(\langle u\rangle_Q^{1/p}\| \sigma^{1/p'}\|_{B, Q}\big)^p
 \varepsilon \bigg(\frac{\langle u\rangle_Q^{1/p}\langle
   \sigma\rangle_Q^{1/p'}}
{\langle u\rangle_Q^{1/p}\|\sigma^{1/p'}\|_{B, Q}  } \bigg)^p \|\sigma^{1/p}\|_{\bar{B}_0, Q}^p\,.
\end{align*}

To complete the proof, we need  a bound in $a$ for the first two
terms.  Since $Q\in \mathcal{P}^a$, we have that  $\langle
u\rangle_Q^{1/p}\langle \sigma\rangle_Q^{1/p'}\asymp 2^a$.  Let
$b_0-1$ be the logarithm of the supremum in~\eqref{eqn:separated-bumps}.  Then there
exists $b$, $a\leq b \leq b_0$, such that 
$\langle u\rangle_Q^{1/p}\|\sigma^{1/p'}\|_{B, Q} \asymp 2^b$.   Our sum in
$a$ will go from $-\infty$ to $b_0$, so it will suffice to consider
those terms where $a<0$.  

If $b$ is negative and $|b|\ge |a|/2$,  then the first term is bounded
by $2^{-\frac{p|a|}2}$, and the second is bounded by some constant.
If $b>0$ or if it is negative but $|b|\le \frac{|a|}2$, then we
estimate the first term by $2^{b_0}$, and the argument in the second
term is at most $ 2^{-\frac{|a|}2}$. Hence the second term is bounded by $\frac{C}{|a|^{p\kappa}}$.
By our assumption, $p\kappa>1$ and so the series $\sum_{a<0} [2^{\frac{pa}2} + \frac{C}{|a|^{p\kappa}}]$
converges.  Therefore, we can finish the proof of Theorem~\ref{thmllst}
exactly as in the proof in Section \ref{section:proof}.

\section{Counterexamples to Muckenhoupt--Wheeden conjectures}
\label{ce}

In this section we prove that the weak-type conjecture of Muckenhoupt
and Wheeden discussed in the Introduction is false for the Hilbert
transform when $p=2$.  We in fact prove a stronger result.

For brevity, we introduce some additional notation.  Let
$\sigma=v^{-1}$ and let $M_\sigma f= M(f\sigma)$.  Define $M_u$,
$H_\sigma$ and $H_u$ similarly, where $H$ is the Hilbert transform.
Then we can reformulate the conjecture as follows: if
\begin{equation}
\label{Mi}
 M_u : L^{2}(u)
\rightarrow L^{2}(\sigma).
\end{equation}
then
\begin{equation} \label{weaki}
H_\sigma : L^2(\sigma)\rightarrow L^{2, \infty}(u).
\end{equation}

We will show by contradiction that this is not true in general.
Suppose to the contrary that if the pair $(u,v)$ satisfies \eqref{Mi},
then \eqref{weaki} holds.  Then for any $f\in L^2(u)$ and any cube $Q$,
\[
\int_Q H_\sigma f u \, dx \leq \|H_\sigma
f\|_{L^{2,\infty}(u)}\|\chi_Q\|_{L^{2,1}(u)}
\leq \|H_\sigma\|_{L^2(\sigma)\rightarrow L^{2,\infty}(u)}
\|f\|_{L^2(\sigma)}u(Q)^{1/2}.
\]
Let $f=H_u(\chi_Q)$.  Then by duality (since $H_\sigma$ is the adjoint
of $H_u$) we have that the pair $(u,v)$ satisfies the testing
condition
\begin{equation} \label{eqn:testing1}
\int_Q |H_u(\chi_Q)|^2 \sigma \,dx \leq Cu(Q).
\end{equation}

The same argument shows that if the pair $(u,\sigma)$ satisfies
\begin{equation}
\label{Mi2}
 M_\sigma : L^{2}(\sigma)
\rightarrow L^{2}(u),
\end{equation}
then this pair also satisfies the testing condition
\begin{equation} \label{eqn:testing2}
\int_Q |H_\sigma(\chi_Q)|^2 u\,dx \leq C\sigma(Q).
\end{equation}

However, we have the following testing condition result for the
Hilbert transform when $p=2$.  This was proved in
\cite[Chapter 22]{Vo} (see also \cite{NTVlost}).

\begin{theorem} \label{thm:testingCZM}
Let $H$ be the Hilbert transform. Then
\begin{multline*}
\|H(\cdot \sigma)\|_{L^2(\sigma)\to L^2(u)}
\leqslant  \|M(\cdot \sigma)\|_{L^2(\sigma)\to L^2(u)} +  \|M(\cdot u)\|_{L^2(u)\to L^2(\sigma)}+\\
\sup_Q \frac{\|H(\chi_Q
  \sigma)\|_{L^2(u)}}{\sigma(Q)^{\frac{1}{2}}} +
\sup_Q \frac{\|H(\chi_Q u)\|_{L^2(\sigma)}}{u(Q)^{\frac{1}{2}}}.
\end{multline*}
\end{theorem}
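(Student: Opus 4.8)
The plan is to follow the proof of the two-weight $T1$ theorem for the Hilbert transform due to Nazarov--Treil--Volberg, in the streamlined form of \cite[Chapter~22]{Vo} (see also \cite{NTVlost}), exploiting the fact that the two maximal-function hypotheses $M(\cdot\sigma)\colon L^2(\sigma)\to L^2(u)$ and $M(\cdot u)\colon L^2(u)\to L^2(\sigma)$ are much stronger than the bare two-weight $A_2$ condition: by Sawyer's two-weight maximal theorem they are equivalent to the testing conditions $\int_Q M(\chi_Q\sigma)^2\,du\lesssim\sigma(Q)$ and its dual, and in particular they already encode the \emph{energy} information that is the genuine obstruction in the unconditional version of the problem.

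First I would pass to a dyadic model. Using a pair of independent random dyadic lattices and the good/bad cube decomposition of \cite{NTV97,NTV98}, one reduces the bound $|\langle H(f\sigma),gu\rangle|\le C\|f\|_{L^2(\sigma)}\|g\|_{L^2(u)}$ to the case where $f$ and $g$ are finite linear combinations of weighted Haar functions $h^\sigma_Q$ and $h^u_R$ supported on \emph{good} cubes, the expected contribution of the bad part being absorbed into the left side. It then suffices to bound, uniformly in the lattices, the bilinear sum
\[ \Sigma=\sum_{Q,R}\langle H(h^\sigma_Q\,\sigma),\,h^u_R\,u\rangle\;\langle f,h^\sigma_Q\rangle_\sigma\,\langle g,h^u_R\rangle_u, \]
which I would split according to the relative size and position of $Q$ and $R$ into a \emph{long-range} part ($Q,R$ far apart or of very different sizes), a \emph{neighbour} part ($Q,R$ comparable and close), and a \emph{nested} part (one deep inside the other). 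The long-range part is controlled by the two-weight $A_2$ condition in Poisson form, which follows at once from the two maximal-function bounds. The neighbour part involves only $O(1)$ terms per cube and is handled by the testing conditions on $\chi_Q$ together with $A_2$. For the nested part, say $Q\subsetneq R$, one writes $h^u_R$ as its constant value on $Q$ plus a fluctuation; the constant pieces assemble into a paraproduct, which is estimated directly from the testing condition $\int_R|H(\chi_R\sigma)|^2\,du\lesssim\sigma(R)$ after building a stopping-time (corona) family adapted to $\sigma$ and applying a Carleson embedding over the stopping cubes (with the dual corona adapted to $u$ handling the transposed term).

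The main obstacle is the residual \emph{stopping} term in the nested part, which in the general two-weight problem is exactly what the NTV energy condition addresses; here I would show it is absorbed using the maximal hypotheses. Concretely, for a family of pairwise disjoint cubes $\{Q_j\}$ inside $R$ one needs $\sum_j P(Q_j,\sigma\chi_R)^2\,\|x-\langle x\rangle_{Q_j,u}\|_{L^2(u,Q_j)}^2\lesssim\sigma(R)$ and its dual; since the energy factor is at most $\ell(Q_j)^2 u(Q_j)$ and the normalized Poisson tail $P(Q_j,\sigma\chi_R)/\ell(Q_j)$ is dominated by $\inf_{Q_j}M(\sigma\chi_R)$ up to harmless outer tails controlled by $A_2$, disjointness gives $\sum_j u(Q_j)\inf_{Q_j}M(\sigma\chi_R)^2\le\int_R M(\sigma\chi_R)^2\,du\lesssim\|M(\cdot\sigma)\|_{L^2(\sigma)\to L^2(u)}^2\,\sigma(R)$, and symmetrically for the dual using $M(\cdot u)$. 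Once this energy input is in hand, the corona bookkeeping and the recombination of all pieces are routine, and taking expectations over the random lattices yields the stated inequality, with the constant a sum of the four quantities on the right-hand side.
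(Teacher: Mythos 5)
Your outline is correct and coincides with the argument the paper relies on: the paper does not reprove this theorem but cites Volberg's CBMS notes (Theorem~22.3 of Chapter~22) and the Nazarov--Treil--Volberg corona-decomposition preprint, merely explaining in a closing remark how to translate between the two formulations via Sawyer's maximal-function testing theorem. Your sketch faithfully reproduces that cited proof --- random lattices, good/bad reduction, long-range/neighbour/nested splitting, paraproduct via testing plus corona Carleson embedding --- and correctly isolates the crucial point, namely that the two maximal-operator hypotheses imply the NTV pivotal (hence energy) condition through $P(Q_j,\chi_R\sigma)\lesssim\inf_{Q_j}M(\chi_R\sigma)$ and disjointness.
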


\begin{zamech}
  We note that the proofs in \cite{Vo} and \cite{NTVlost} can be
  adapted to prove Theorem~\ref{thm:testingCZM} for an {\it arbitrary}
  operator with Calder\'on--Zygmund kernel.  This was essentially done
  in \cite{PTV}.  This paper is concerned with one-weight
  inequalities, but the same argument works with no change in the two
  weight case.
\end{zamech}

Therefore, by our assumption and Theorem~\ref{thm:testingCZM}, we have
that if a pair of weights $(u,\sigma)$ satisfies~\eqref{Mi}
and~\eqref{Mi2}, then $H : L^2(\sigma)\rightarrow L^2(u)$.  However,
this contradicts the counterexample constructed by Reguera and
Scurry~\cite{RS} (in fact, in this paper they directly disprove the
testing condition for the Hilbert transform).  Therefore, the
weak-type conjecture of Muckenhoupt and Wheeden cannot hold.

In fact, we have proved a
stronger result.

\begin{theorem} \label{thm:weak-MW}
There exists a pair of weights $(u,\sigma)$ such that \eqref{Mi} and
\eqref{Mi2} hold, but the Hilbert transform does not satisfy the
weak-type inequality $H_\sigma : L^2(\sigma) \rightarrow
L^{2,\infty}(u)$.
\end{theorem}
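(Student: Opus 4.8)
The plan is to take for $(u,\sigma)$ the weight pair constructed by Reguera and Scurry in~\cite{RS} and to argue by contradiction, running in reverse the duality computation sketched above. Recall that their pair is built precisely so that \eqref{Mi} and \eqref{Mi2} both hold --- this is what makes it a counterexample to the strong-type Muckenhoupt--Wheeden conjecture --- while the testing condition in Theorem~\ref{thm:testingCZM} fails; after possibly interchanging the roles of $u$ and $\sigma$, which leaves \eqref{Mi} and \eqref{Mi2} intact, I may assume that the testing condition that fails is \eqref{eqn:testing1}, i.e.\ that $\sup_Q u(Q)^{-1}\int_Q |H_u(\chi_Q)|^2\,\sigma\,dx=\infty$.

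Now suppose, toward a contradiction, that for this pair $H_\sigma:L^2(\sigma)\to L^{2,\infty}(u)$ with norm $N$. Fix a cube $Q$ and $h\in L^2(\sigma)$ supported in $Q$; moving $H$ by anti-self-adjointness gives $\int_Q H_u(\chi_Q)\,h\,\sigma\,dx=-\int_Q H_\sigma(h\chi_Q)\,u\,dx$, and the right side is at most $\|H_\sigma(h\chi_Q)\|_{L^{2,\infty}(u)}\|\chi_Q\|_{L^{2,1}(u)}\lesssim N\|h\|_{L^2(\sigma)}u(Q)^{1/2}$, using the $L^{2,\infty}$--$L^{2,1}$ H\"older inequality and $\|\chi_Q\|_{L^{2,1}(u)}\lesssim u(Q)^{1/2}$. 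Taking the supremum over such $h$ with $\|h\|_{L^2(\sigma)}\le1$ yields $\int_Q |H_u(\chi_Q)|^2\,\sigma\,dx\lesssim N^2 u(Q)$ uniformly in $Q$, which is exactly the testing condition \eqref{eqn:testing1} that the Reguera--Scurry pair violates; this is the desired contradiction, and since that same pair satisfies \eqref{Mi} and \eqref{Mi2}, Theorem~\ref{thm:weak-MW} follows. (If one prefers not to relabel, one argues instead that were \emph{both} $H_\sigma:L^2(\sigma)\to L^{2,\infty}(u)$ and $H_u:L^2(u)\to L^{2,\infty}(\sigma)$ to hold, then the two testing conditions \eqref{eqn:testing1} and \eqref{eqn:testing2} would both follow by this computation, whence $H:L^2(\sigma)\to L^2(u)$ by Theorem~\ref{thm:testingCZM}, again contradicting~\cite{RS}.)

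I expect the only genuine obstacle to be the integrability point concealed in the duality pairing: a priori $H_u(\chi_Q)=H(\chi_Q u)$ need not lie in $L^2(\sigma)$ --- that membership is essentially the conclusion --- so the identity $\int_Q H_u(\chi_Q)\,h\,\sigma=-\int_Q H_\sigma(h\chi_Q)\,u$ and the subsequent estimate should first be proved for bounded, compactly supported $h$ (for which $h\sigma\chi_Q\in L^1\cap L^2$ and all pairings in sight converge absolutely), and then the uniform bound is promoted to the full testing inequality by a standard density/Fatou argument. One should also verify against~\cite{RS} that, after the harmless relabeling, the testing bound they disprove is indeed \eqref{eqn:testing1} and that \eqref{Mi} and \eqref{Mi2} are genuinely established there; granting these routine matters, nothing beyond the duality manipulation already displayed in the paper is required.
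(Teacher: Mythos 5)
Your proposal is correct and follows essentially the same route as the paper: the same $L^{2,\infty}$--$L^{2,1}$ duality computation showing that the weak-type bound for $H_\sigma$ forces the Sawyer testing condition \eqref{eqn:testing1}, combined with the Reguera--Scurry pair from \cite{RS} (either via their direct disproof of the testing condition, or via Theorem~\ref{thm:testingCZM} as in your parenthetical, which is the paper's primary phrasing). Your extra care about the integrability in the duality pairing and the $u\leftrightarrow\sigma$ relabeling is a harmless refinement of the argument already in the text.
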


\bigskip

We conclude this section with three remarks.  First, Theorem
\ref{thm:testingCZM} is essentially Theorem
22.3 in \cite{Vo}, but it is formulated there in slightly different
language. For the convenience of the reader we want to explain why
these two results are in fact equivalent.

The main part of Theorem 22.3 in \cite{Vo} says that if $M_u,
M_\sigma, H_u, H_\sigma$ all satisfy the testing conditions
\eqref{eqn:testing1} and \eqref{eqn:testing2} (replacing $H$ with $M$ when
dealing with the maximal operator), then all four of them are bounded
in corresponding pairs of weighted spaces. This gives us
Theorem \ref{thm:testingCZM}. Conversely, suppose that the right-hand
side of the inequality in Theorem \ref{thm:testingCZM} is finite. Then
\eqref{eqn:testing1} and \eqref{eqn:testing2} are both satisfied. Moreover,
$M_u$ and $M_\sigma$ are both bounded on the corresponding pairs of
weighted spaces. Therefore, trivially, $M_u, M_\sigma$ also satisfy
the corresponding testing conditions. Thus, all four operators satisfy
the testing conditions, and then Theorem~22.3 gives that
$H_u: L^2(u)\rightarrow L^2(\sigma)$ and $H_\sigma
:L^2(\sigma)\rightarrow L^2(u)$ as well as the corresponding norm
inequalities for the maximal operators. (The latter follows from Sawyer's
testing criterion for  the two weight boundedness of maximal function
\cite{Smax}.)

\bigskip

In the Introduction we noted that the weak-type conjecture we just
disproved followed from another  conjecture of Muckenhoupt and
Wheeden:  that
\begin{equation}
\label{Hweak}
u(\{x: |Hf(x)| >t\}) \le \frac{C}{t}\int |f|\, Mu\, dx\,.
\end{equation}
(This implication is a straightforward duality argument: see~\cite{CU-M-P-book}.)
Inequality \eqref{Hweak} was disproved by Reguera and Thiele \cite{RT}; our result
above gives another (indirect) proof of this fact.

\medskip

Finally, we note that there is a weaker conjecture than \eqref{Hweak}
which has also been shown to be false.   In the one-weight case it was
conjectured that for all $w\in A_1$,
\begin{equation}
\label{HweakA1}
w(\{x: Hf(x) >t\}) \le \frac{C}{t}[w]_{A_1}\int |f|\, w\, dx\,,
\end{equation}
where $[w]_{A_1} = \|\frac{Mw}{w}\|_{L^\infty}$.  The counterexample
to \eqref{Hweak} in \cite{RT} is not in $A_1$. Essentially, disproving \eqref{HweakA1} amounts to
finding a ``smooth" bad weight, which is even more difficult to build
than the weight of Reguera--Thiele.  While no explicit example has
been constructed, the existence of such a weight has been proved using
Bellman function techniques: in \cite{NRVV} it was shown that there exist
weights in $A_1$ such that
\begin{equation}
\label{HweakA1be}
\|H\|_{L^1(w)\rightarrow L^{1,\infty}(w)} \ge c\,[w]_{A_1} \log^{1/5} [w]_{A_1} \,.
\end{equation}

\end{document}